\documentclass[11pt]{amsart}
\usepackage[a4paper,margin=2.5cm]{geometry}

\usepackage{tikz}
\usetikzlibrary{arrows.meta}   
\usetikzlibrary{calc}          
\usetikzlibrary{backgrounds}   

\usepackage[english]{babel}
\usepackage{graphicx,amsmath,amssymb,color, enumerate}
\numberwithin{equation}{section}
\usepackage{enumerate}
\usepackage{comment}

\usepackage{graphicx}
\usepackage{xcolor}
\definecolor{brickred}{rgb}{0.8, 0.25, 0.33}
\definecolor{blue(ryb)}{rgb}{0.01, 0.28, 1.0}
\definecolor{brandeisblue}{rgb}{0.0, 0.44, 1.0}
\definecolor{ceruleanblue}{rgb}{0.16, 0.32, 0.75}
\definecolor{cobalt}{rgb}{0.0, 0.28, 0.67}
\definecolor{coolblack}{rgb}{0.0, 0.18, 0.39}
\definecolor{darkblue}{rgb}{0.0, 0.0, 0.55}
\usepackage[hidelinks]{hyperref}
\hypersetup{
	colorlinks,
	citecolor=darkblue,
	filecolor=black,
	linkcolor=darkblue,
	urlcolor=black
}

\usepackage{comment}

\newtheorem{theorem}{Theorem}[section]
\newtheorem{lemma}[theorem]{Lemma}
\newtheorem{proposition}[theorem]{Proposition}
\newtheorem{corollary}[theorem]{Corollary}

\theoremstyle{remark}

\newtheorem{remark}[theorem]{Remark}

\newcommand{\mn}{\medskip\noindent}

\newcommand{\R}{\mathbb{R}}

\newcommand{\cE}{\mathcal E}

\newcommand{\action}{I}

\newcommand{\Ai}{\operatorname{Ai}}
\newcommand{\Bi}{\operatorname{Bi}}

\title[Singular harmonic oscillators with Aharonov--Bohm flux]
{Spectral Asymptotics of a Singular Harmonic Oscillator with Aharonov--Bohm Flux}

\author[B. Helffer]{Bernard Helffer}
\address[B. Helffer]{Laboratoire de Math\'ematiques Jean Leray, CNRS, Nantes Universit\'e,
	44000 Nantes, France.}
\email{Bernard.Helffer@univ-nantes.fr}
\author[A. Kachmar]{Ayman Kachmar}
\address[A. Kachmar]{Department of Mathematics and PDE Research Unit–Center for Advanced Mathematical Sciences, American University of Beirut, P.O.Box 11-
	0236, Riad El-Solh, Beirut 1107 2020, Lebanon.}
\email{ak292@aub.edu.lb}
\author[F. Nicoleau]{Fran\c{c}ois Nicoleau}
\address[F. Nicoleau]{Laboratoire de Math\'ematiques Jean Leray, CNRS, Nantes Universit\'e,
	44000 Nantes, France.}
\email{francois.nicoleau@univ-nantes.fr}
	\subjclass[2020]{35P20, 58J50.}
		\keywords{Dirichlet magnetic  Laplacian, Aharonov-Bohm solenoid, eigenvalue asymptotics, special functions.}  

\begin{document}

\begin{abstract}
Motivated by a recent article of R. Vanlaere (2026), we investigate the spectral asymptotics of a singular harmonic oscillator arising from the radial reduction of the magnetic Schrödinger operator on the unit disk in the presence of a constant magnetic field and an Aharonov–Bohm flux. The corresponding eigenvalue problem is equivalent to the study of the zeros of the Kummer confluent hypergeometric function with respect to its first parameter. Combining  now standard semiclassical methods with the uniform asymptotic theory of Whittaker and confluent hypergeometric functions  developed by Dunster (1989), Gabutti-Gatteschi (2001) and improved quite recently by Dunster (2026), we obtain a comprehensive description of the different spectral regimes. When the boundary lies in the classically forbidden region, we derive the exponentially small asymptotic correction to the eigenvalue caused by the tunneling effect. For energy levels above the tunneling regime, we derive a shifted Bohr–Sommerfeld asymptotics for highly excited states at a fixed magnetic field. Finally, for the transition regime where the energy level touches the boundary, we derive a three-term asymptotic expansion of the eigenvalue. In an appendix, we translate these spectral results into explicit asymptotic formulas for the zeros of the Kummer function.
\end{abstract}

\maketitle

\section{Introduction}
\subsection{Harmonic oscillator with inverse square potential}
We study the semi-classical asymptotics of the high energy levels of the operator, 
\begin{equation}\label{eq:def-Th}
T_h = -h^2 \frac{d^2}{dx^2} + x^2 + h^2 \frac{\nu^2 - 1/4}{x^2}\,, \qquad \nu \ge 0\,,
\end{equation}
with Dirichlet boundary conditions at \(x=0\) and \(x=1\)\,. Here $\nu\geq 0$ is a given parameter, which will be fixed throughout this paper. 

Thanks to the Hardy inequality, $T_h$ is semi-bounded on $C_c^\infty(0,1)$. The Friedrichs extension yields a self-adjoint operator in $L^2(0,1)$ which we denote by $T_h$ hereafter. 

If we would like to stress that the Dirichlet boundary condition is imposed at $x=1$, we will add a superscript $D$ and write $T_h^D$. Note that if we start with the domain $C_c^\infty((0,1]),$ the Friedrichs extension will be the Neumann realization and we denote it by $T_h^N$. For $T_h^N$, Neumann boundary condition is imposed at $x=1\,$, while Dirichlet is maintained at $x=0\,$.

By the Sturm--Liouville theory, the spectrum of $T_h$ is purely discrete,
consisting of simple eigenvalues
\[
\lambda_1(h)<\lambda_2(h)<\cdots,
\]
whose distribution we would like to understand in the semiclassical
limit.

\subsection{Motivation} One motivation behind the study of $T_h$ is a problem in control theory \cite{Vanlaere2, BMM}. In this context, one encounters the operator $T_h$, with $h=\xi^{-1}$,  which arises by taking the partial Fourier transform of the $y$-variable in the Baouendi-Grushin operator
\[-\partial_x^2-x^2\partial_y^2+\frac{\nu^2-1/4}{ x^2}\,.\]

Another  motivation of our study comes from the magnetic Laplacian. It is known that the presence of a magnetic field leads to an interesting interaction between the magnetic field,
the geometry of the domain, and its topology. A fundamental example is the magnetic Dirichlet Laplacian on the
unit disk
\begin{equation}\label{disk}
D=\{x\in\mathbb R^2;\ |x|<1\}\,,
\end{equation}
submitted to a constant magnetic field together with an
Aharonov--Bohm solenoid located at the origin. Passing to polar
coordinates and separating variables (see \cite{HL} and \cite[Section~1.3]{Vanlaere}), the two-dimensional problem
decomposes into a family of one-dimensional singular
Sturm--Liouville operators indexed by the angular momentum. After  a standard Liouville transformation, one obtains the singular harmonic oscillator acting in $L^2(0,1)$,
\[-\frac{d^2}{dx^2} + \frac{B^2x^2}{4} + \frac{(m+\alpha)^2 - 1/4}{x^2}\,,\]
where $B$ is the intensity of the magnetic field, $m\in\mathbb Z$ is the angular momentum, and $\alpha$ is the Aharonov-Bohm flux.
Setting $h=2/B$ and $ \nu=|m+\alpha|$, and imposing Dirichlet boundary condition on $\partial D,$ we recover the operator $T_h$. The semi-classical limit $h\to0^+$ considered here is equivalent to the strong field limit, $B\to+\infty$ in the context of the magnetic Laplacian. 
\subsection{Universal comparison}
We first observe that, for  any integer $k\geq 1$, we have 
\begin{subequations}\label{eq:1.3}
\begin{equation}
\lambda_k(h)\geq e_k(h)\,,
\end{equation}
where 
\begin{equation}
e_k(h):=(4k+2\nu-2)h
\end{equation}
\end{subequations}
is the $k$'th eigenvalue of the operator (see \cite{ESV})
\[ T_h^\infty=- h^2\frac{d^2}{dx^2}+x^2+h^2\frac{\nu^2-1/4}{x^2}\quad \text{in $L^2(\mathbb R^+)$}.\]

The proof is easy. Extending functions in the domain of $T_h$ by zero, we get by the min-max principle that the eigenvalues of $T_h$ are bounded below by the eigenvalues of $T_h^\infty$, the operator in $\R_+$. 

\subsection{Low lying energy levels}
Fixing  the eigenvalue  label $k$, the following asymptotics hold as \(h\to0^+\,\),
\begin{equation}\label{eq:Gannot*}
\lambda_k(h)
=
(4k+2\nu-2)h
+
\frac{4h^{-2k-\nu+2}}
{(k-1)!\,\Gamma(k+\nu)}
e^{-1/h}
\left(1+\mathcal O(h)\right)\,.
\end{equation}
This
is a special case of a theorem by 
O.~Gannot \cite{Gannot}, which holds for a generic class of scalar potentials, beyond the harmonic potential  discussed in this paper\footnote{Recently, M.~Baur and T.~Weidl \cite{BaurWeidl} derived \eqref{eq:Gannot*} via special functions, and  the second author together with  G.~Miranda \cite{KM} derived it and the analogous formula for the Neumann boundary condition via the Temple inequality (see also \cite{HL}).}. Therefore, the asymptotics \eqref{eq:Gannot*} reflects the exponentially weak influence of the boundary as $h$ tends to $0$. This is a manifestation of tunneling, since the Dirichlet boundary condition models the hard wall potential\footnote{If we add to $T_h$ the potential $V_a=a\mathbf 1_{[1,+\infty)}\,$, then the Dirichlet boundary condition at $x=1$ corresponds to  taking $a=+\infty$.}. 

\subsection{Persistence of tunneling}
Vanlaere recently observed that  tunneling persists up to higher energy levels. More precisely, given $0<\rho^*<1$, it follows from \cite[Theorem~1.4]{Vanlaere}  that there exist
 constants $c,h_0>0$ such that for all pairs $(h,k) \in (0,h_0]\times \mathbb N^*$ satisfying
\[ e_k(h)\leq \rho^*\,,\]
the following bounds hold
\begin{equation}\label{eq:Vanlaere}
\lambda_k(h)\leq e_k(h)+e^{-c/h}\,.
\end{equation}
 Our objective is to prove an accurate asymptotics, specifically to obtain  the optimal exponential decay in \eqref{eq:Vanlaere} and to exhibit  the influence of the parameter $\nu$ (recall that $\nu$ encapsulates the Aharonov-Bohm flux). This quantifies the tunneling mechanism for excited states corresponding to these higher energy levels.

Our first result is to prove that \eqref{eq:Gannot*}
 continues to hold when
$e_k(h)\ll 1\,$, up to $k\lesssim h^{-\gamma}\,$, for any fixed
$0<\gamma<1/2\,$.

\begin{theorem}\label{thm:ext-Gannot}
For any fixed $\gamma\in(0,1/2)$ 
and for any integer $k$ satisfying
\[
1\leq k\leq 
h^{-\gamma}\,, 
\]
we have
\[\lambda_k(h)=e_k(h)+\frac{4}
{(k-1)!\,\Gamma(k+\nu)}\,h^{-2k-\nu+2}\,
e^{-1/h}
\bigl(1+\varepsilon_k(h)\bigr)\,,\]
where
\[\sup_{k\in[0, h^{-\gamma}]}|\varepsilon_k(h)|\longrightarrow 0\quad\text{as $h\longrightarrow0^+$\,.}\] 
\end{theorem}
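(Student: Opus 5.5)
The proof will reduce the eigenvalue problem to an exact equation involving Kummer's function and then analyse that equation uniformly in $k\le h^{-\gamma}$. The solution of $T_hu=\lambda u$ that is regular at $0$ is
\[
u(x)=x^{\nu+1/2}e^{-x^2/(2h)}\,M\!\Bigl(a,\nu+1,\tfrac{x^2}{h}\Bigr),\qquad a=\tfrac{1+\nu}{2}-\tfrac{\lambda}{4h}.
\]
The Dirichlet condition at $x=1$ therefore becomes $M(a,\nu+1,1/h)=0$. At $\lambda=e_k(h)$ we have $a=-(k-1)$, and $M$ reduces to the generalized Laguerre polynomial, with
\[
M(-(k-1),\nu+1,z)=\frac{(k-1)!\,\Gamma(\nu+1)}{\Gamma(k+\nu)}\,L^{(\nu)}_{k-1}(z).
\]
Write $a=-(k-1)-\delta$ with $\delta=(\lambda-e_k)/(4h)$. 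Then $\delta\ge0$ by \eqref{eq:1.3}, and \eqref{eq:Vanlaere} gives $\delta\le e^{-c/h}$ once $e_k\le\rho^*$, which holds in our range since $e_k\sim 4kh\le 4h^{1-\gamma}$. Also $\lambda_k$ is the $k$-th zero in $\lambda$, so it corresponds to the root near $a=-(k-1)$.

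The plan is to expand $F(\delta):=M(-(k-1)-\delta,\nu+1,z)$ with $z=1/h$ to first order in $\delta$ and solve $F(\delta)=0$. This gives
\[
\delta=-\frac{F(0)}{F'(0)}\,(1+o(1)).
\]
For $F(0)$ we need the Laguerre polynomial at $z=1/h$, which is far beyond its oscillatory zone since $4k\ll z$. Its leading term gives
\[
L^{(\nu)}_{k-1}(z)=\frac{(-z)^{k-1}}{(k-1)!}\Bigl(1+O\bigl(k(k+\nu)h\bigr)\Bigr).
\]
The error is uniformly $O(h^{1-2\gamma})\to0$, because the ratio of consecutive terms of the finite sum is at most $k(k+\nu)/z$. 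This is exactly where $\gamma<1/2$ enters.

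For $F'(0)=-\partial_aM$ at $a=-(k-1)$, I would use the exponentially large part. Kummer's transformation gives $M(a,b,z)=e^zM(b-a,b,-z)$, and the large-$z$ expansion is
\[
M(a,b,z)=\frac{\Gamma(b)}{\Gamma(a)}e^zz^{a-b}(1+\dots)+\frac{\Gamma(b)}{\Gamma(b-a)}(-z)^{-a}(1+\dots).
\]
Differentiating in $a$ at the pole of $1/\Gamma(a)$, and using $\partial_a(1/\Gamma(a))|_{a=-n}=(-1)^n n!$, yields
\[
F'(0)\approx -(-1)^{k-1}(k-1)!\,\Gamma(\nu+1)\,e^{1/h}h^{k+\nu}.
\]
Combining the two,
\[
\delta=\frac{h^{-2k-\nu+1}e^{-1/h}}{(k-1)!\,\Gamma(k+\nu)}\,(1+o(1)).
\]
Multiplying by $4h$ gives the claimed formula with $\varepsilon_k(h)$. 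As a sanity check, this reproduces \eqref{eq:Gannot*} for fixed $k$.

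The main obstacle is making this uniform in $k\le h^{-\gamma}$. Three things are needed.

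1. The asymptotic series for $M$ near $a=-n$ must be controlled uniformly, including its $a$-derivative. I would avoid the divergent series and instead use the exact integral representation $\frac{\Gamma(b-a)\Gamma(a)}{\Gamma(b)}M(a,b,z)=\int_0^1 e^{zt}t^{a-1}(1-t)^{b-a-1}\,dt$, continued to negative $a$ by a Hankel loop around $t=0$. Laplace's method at $t=1$ then gives the $e^z$ term with relative error $O(k(k+\nu)/z)$, which is again $O(h^{1-2\gamma})$.

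2. The remainder in the Taylor expansion in $\delta$ must be negligible. This needs a bound on $\partial_a^2M$ over $|a+(k-1)|\le e^{-c/h}$ of size at most $e^{1/h}$ times a polynomial in $1/h$ and $k!$ factors. A Cauchy estimate on a small disc in $a$, using the same integral bound, should suffice, since $\delta$ is exponentially small.

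3. Since $e_k\le 4h^{1-\gamma}\to0$ in our range, \eqref{eq:Vanlaere} applies uniformly and rules out selecting a wrong root.

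The only genuinely delicate point is the uniform Laplace estimate in step 1. Everything else is bookkeeping with $k^2h\to0$.
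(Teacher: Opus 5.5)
Your proposal is correct, and it takes a genuinely different route from the paper.

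\textbf{The paper's route.} The theorem is obtained from Proposition~\ref{prop:excited-states} with $\ell(h)=4h^{1-\gamma}+(2\nu-2)h$, and that proposition is proved by a quasimode construction.
\begin{enumerate}
\item From the half-line eigenfunction $\Phi_{k,h}$ it builds the reduction-of-order solution $\Psi_{k,h}=\Phi_{k,h}\int_a^x\Phi_{k,h}^{-2}\,dt$.
\item It corrects to $v=\Phi_{k,h}+c_{k,h}\chi\Psi_{k,h}$ with $v(1)=0$, where $c_{k,h}=-\frac{2}{h}\Phi_{k,h}(1)^2(1+o(1))$ by an endpoint Laplace estimate.
\item It computes $\langle (T_h-e_k(h))v,v\rangle=-h^2c_{k,h}(1+o(1))$ by a Wronskian integration by parts.
\item It passes from the Rayleigh quotient to $\lambda_k(h)$ using spectral projections and the separation $|\lambda_j(h)-e_k(h)|\geq 2h$, $j\neq k$, of Corollary~\ref{corol:gap}.
\end{enumerate}
Its only special-function input is the leading-term bound of Lemma~\ref{lem:vl-laguerre} in $C^1([a,1])$.

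\textbf{Your route.} You use the same Laguerre input, but only at the single point $z=1/h$. You replace the quasimode and projection machinery by one Newton step on the exact condition $M(a,\nu+1,1/h)=0$.
\begin{itemize}
\item The exponentially growing solution, which the paper encodes in $J_{k,h}=\int_a^1\Phi_{k,h}^{-2}\,dt$, appears in your argument as the $e^{z}$ component of $\partial_aM$ at the pole of $1/\Gamma(a)$.
\item The a priori localization $0\leq\delta\leq e^{-c/h}/(4h)$, from \eqref{eq:1.3} and \eqref{eq:Vanlaere}, does the job of the gap argument and settles root selection at the same time.
\end{itemize}

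\textbf{What each buys.} Your route gives an explicit rate, $\varepsilon_k(h)=\mathcal O(k^2h)=\mathcal O(h^{1-2\gamma})$, with no variational step. But it rests on exact solvability, and it concentrates all the work in a uniform-in-$k$ analysis of $\partial_aM(a,\nu+1,z)$ near $a=1-k$. That analysis has three parts:
\begin{itemize}
\item the loop-integral Laplace estimate, with relative error $\mathcal O(k^2/z)$;
\item checking that the algebraically growing part of $\partial_aM$ is negligible, which holds since $z^{2k+\nu}e^{-z}\to0$ uniformly;
\item controlling the Taylor remainder.
\end{itemize}
For the Taylor remainder, note that the losses are of size $h^{-\mathcal O(k)}$, not polynomial in $1/h$. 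What saves you is that $h^{-\mathcal O(k)}=e^{o(1/h)}$ for $k\leq h^{-\gamma}$. The crude bound $|M(a,b,z)|\leq M(k,1,z)\leq e^{z}(1+z)^{k-1}$ for $|a+k-1|\leq1$, $b\geq1$, together with a Cauchy estimate, already suffices against $\delta\leq e^{-c/h}/(4h)$.

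The paper's route avoids parameter derivatives of special functions. It reuses verbatim the framework of Theorem~\ref{thm:main} and of the Neumann case. It would also carry over to non-explicit potentials, given decay information on the half-line eigenfunction near $x=1$.
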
 
Following the terminology of \cite{Sj}, the eigenvalues of order $\mathcal O(h^\delta)$, with $ 0<\delta<1$, are called semi-excited states. 

\mn
Our next task is to explore the regime beyond the semi-excited states, which are not covered by Theorem~\ref{thm:ext-Gannot}.

\mn
For a given $ E\in (0,1)$, we introduce (see for example \cite{He,HS}) the Agmon distance (associated with the potential $V(s)=s^2$) to the ``classical  region" $\{s\in (0,1)\,,\, s^2 \leq E\,\}$:
\begin{equation}\label{eq:def-Agmon}
S(x;E)=\int_0^x\sqrt{(s^2-E)_+}\,ds\,,
\end{equation}
whose maximum on $[0,1]$ is
\begin{equation}\label{eq:S-tunneling}
S_*(E)=\int_0^1\sqrt{(s^2-E)_+}\,ds\,,
\end{equation}
which is the Agmon distance from the boundary $\{s=1\}$ to the classical region.
 We can extend by continuity $S(x,E)$ and $S_*(E)$ at $0$ and notice that $S(x,0)= x^2/2$ and  $S_*(0)=1/2\,$.
\mn
When $E=e_k(h)$, we write
\[\mathsf S_{k,h}=S_*(e_k(h))\,.\]

Our second result is

\begin{theorem}[Tunneling beyond semi-excited states]\label{thm:main}
	Let \(0<\rho^\ast<1\) be given. Let
	\[
	\ell:(0,1]\longrightarrow \mathbb R_+
	\]
	satisfy
	\[
	\ell(h)\longrightarrow 0\,\mbox{ and }
		h^{-1}\ell(h)\longrightarrow +\infty
	\quad\text{ as }h\longrightarrow0^+\,.
	\]
	For every \(h\in(0,1]\)\,, define
	\begin{equation}\label{eq:cond-mu}
	\mathcal J(h)
	=
	\bigl\{
	k\in\mathbb N:
	\ell(h)\leq e_k(h)\leq \rho^\ast
	\bigr\}\,.
	\end{equation}
	Then, for \(k\in\mathcal J(h)\)\,,
	\begin{equation}\label{eq:main-asy}
	\lambda_k(h)
	=
	e_k(h)
	+
	\frac{2}{\pi}\,h\,
	\exp\!\left(
	-\frac{2\mathsf S_{k,h}}{h}
	\right)
	\bigl(1+r_k(h)\bigr)\,,
	\end{equation}
	where
	\[\sup_{k\in\mathcal J(h)}|r_k(h)|\longrightarrow0\quad \text{as $h\longrightarrow0^+\,.$}\]
\end{theorem}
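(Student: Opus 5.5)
The plan is to reduce the eigenvalue problem to the zeros of a Kummer function and then perturb in the first parameter around its non-positive-integer values. Substituting $u(x)=x^{\nu+1/2}e^{-x^2/(2h)}\,w(x^2/h)$ turns $T_hu=\lambda u$ with the Dirichlet condition at $0$ into Kummer's equation. With $a=\tfrac{\nu+1}{2}-\tfrac{\lambda}{4h}$ and $b=\nu+1$, the solution regular at $0$ is $M(a,b,z)$, $z=x^2/h$. The Dirichlet condition at $x=1$ then becomes
\[
M\bigl(a,\nu+1,h^{-1}\bigr)=0 .
\]
At $\lambda=e_k(h)$ one has $a=-(k-1)$, and $M$ reduces to the Laguerre polynomial $L^{(\nu)}_{k-1}$ (up to normalization). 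So I will write $a=-(k-1)-\delta$ with $\delta=(\lambda_k-e_k)/(4h)$. The a priori bound \eqref{eq:1.3} together with \eqref{eq:Vanlaere} gives $0<\delta\le e^{-c/h}/h$, and this localizes the relevant zero of $M$ near $-(k-1)$.

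To expand in $\delta$, I will use the connection formula
\[
M(a,b,z)=\Gamma(b)\Bigl[\tfrac{e^{\pm i\pi a}}{\Gamma(b-a)}\,U(a,b,z)+\dots\Bigr],
\]
or more conveniently
\[
\frac{M(a,b,z)}{\Gamma(b)}=\frac{e^{i\pi a}\,U(a,b,z)}{\Gamma(b-a)}+\frac{e^{i\pi(a-b)}\,U(b-a,b,e^{i\pi}z)\,e^{z}}{\Gamma(a)} .
\]
Since $1/\Gamma(a)\approx(-1)^{k}(k-1)!\,\delta$ for small $\delta$, the zero condition takes the form
\[
\delta=\frac{U(1-k,\nu+1,h^{-1})\,e^{-1/h}}{(k-1)!\,\Gamma(k+\nu)\,\widetilde U(h)}\bigl(1+o(1)\bigr),
\]
where $\widetilde U$ comes from the recessive/growing solution $z^{a-b}$-type piece. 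This is exactly the mechanism behind \eqref{eq:Gannot*} and Theorem~\ref{thm:ext-Gannot}. The real content of the proof is to evaluate the ratio
\[
\frac{L^{(\nu)}_{k-1}(1/h)^2\,e^{-1/h}}{(\text{normalization})}
\]
uniformly when $e_k(h)\in[\ell(h),\rho^*]$, that is, when $k\sim h^{-1}$ rather than bounded.

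Equivalently, and more transparently, I plan a Wronskian (Herring-type) formula. Let $u_k$ be the $L^2(\R_+)$-normalized eigenfunction of $T_h^\infty$. Then
\[
\lambda_k(h)-e_k(h)=h^2\,u_k'(1)^2\,(1+o(1)).
\]
This follows by testing $T_h$ on $\chi u_k$, together with the spectral gap $\approx 4h$ from the neighbouring levels and the bound \eqref{eq:Vanlaere}, which controls the error uniformly in $k$. The task then reduces to a uniform asymptotic for $u_k'(1)$, i.e. for Laguerre functions $e^{-z/2}z^{(\nu+1)/2}L^{(\nu)}_{k-1}(z)$ at $z=1/h$. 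The point $z=1/h$ lies beyond the turning point: $x=1$ versus $x_t=\sqrt{E}\le\sqrt{\rho^*}<1$, so it sits in the forbidden region at a fixed distance from the turning point. Here I will invoke the uniform Liouville–Green (WKB) asymptotics of Dunster (1989) for Whittaker functions $W_{\kappa,\nu/2}$ with large $\kappa=E/(4h)$. Away from the turning point these give
\[
u_k(x)\approx \Bigl(\frac{2}{\pi h}\Bigr)^{1/2}\!\frac{(\text{const})}{(x^2-E)^{1/4}}\,\exp\!\Bigl(-\frac{1}{h}\int_{\sqrt E}^x\sqrt{s^2-E}\,ds\Bigr).
\]
The normalization is fixed by the Bohr–Sommerfeld period $\int dx/\sqrt{E-x^2}=\pi/2$, which is independent of $E$. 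Differentiating gives $h^2u_k'(1)^2\approx\frac{2}{\pi}\,h\,(1-E)^{1/2}(1-E)^{-1/2}e^{-2\mathsf S_{k,h}/h}$, where the prefactors cancel, and this is \eqref{eq:main-asy}.

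The main obstacle is uniformity at the lower end $e_k\approx\ell(h)$, where $E\to0$ and the turning point merges with the singular point $x=0$ carrying the $\nu^2/x^2$ term. There the standard Airy-type WKB normalization must be replaced by the Bessel-type uniform expansion, which is the Dunster expansion for $\kappa\to\infty$ uniformly in $\nu/\kappa\to0$. The condition $h^{-1}\ell(h)\to\infty$, i.e. $k\to\infty$, is exactly what makes $\kappa$ large, so that Stirling applied to $(k-1)!\,\Gamma(k+\nu)$ matches the WKB constant. I would check this overlap against Theorem~\ref{thm:ext-Gannot}: in the intermediate range $1\ll k\le h^{-\gamma}$, Stirling together with the expansion
\[
S_*(E)=\tfrac12-\tfrac{E}{4}\bigl(1+\log\tfrac{4}{E}\bigr)+\tfrac{E}{4}\,\cdots
\]
should reproduce $\frac{2}{\pi}he^{-2\mathsf S/h}$ from the formula in Theorem~\ref{thm:ext-Gannot}. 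This gives consistency, and it can patch the regime where the Dunster error terms are hardest to control.
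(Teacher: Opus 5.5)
\textbf{The key identity is false.} Your Herring-type identity $\lambda_k(h)-e_k(h)=h^2u_k'(1)^2(1+o(1))$ is incorrect, and the rest of your argument depends on it. In the forbidden region $u_k'(1)=-h^{-1}\sqrt{1-e_k}\,u_k(1)(1+o(1))$, so your formula predicts $(1-e_k)\,u_k(1)^2$. The correct leading term is
\[
\lambda_k(h)-e_k(h)=-2h^2u_k(1)u_k'(1)\bigl(1+o(1)\bigr)=2h\sqrt{1-e_k(h)}\,u_k(1)^2\bigl(1+o(1)\bigr),
\]
which is smaller by the factor $2h/\sqrt{1-e_k}$. Green's formula with the Dirichlet eigenfunction $\phi_k$ gives exactly $\lambda_k-e_k=-h^2u_k(1)\phi_k'(1)/\langle\phi_k,u_k\rangle$, and the wall doubles the slope: $\phi_k'(1)\approx 2u_k'(1)$.

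You can check this at $k=1$, $\nu=1/2$. There $u_1(1)^2=4\pi^{-1/2}h^{-3/2}e^{-1/h}$, so $h^2u_1'(1)^2\approx 4\pi^{-1/2}h^{-3/2}e^{-1/h}$. But \eqref{eq:Gannot*} gives $8\pi^{-1/2}h^{-1/2}e^{-1/h}=2h\,u_1(1)^2$. The paper's Proposition~\ref{prop:excited-states} likewise produces $2h\,\Phi_{k,h}(1)^2$.

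Your final line only appears to work because the prefactors are made to cancel by fiat. With $E=e_k(h)$, the forbidden-region amplitude of the normalized eigenfunction is $\pi^{-1/2}(x^2-E)^{-1/4}$, with no factor $h^{-1/2}$. (This is $\hat C_{k,h}(x^2/E-1)^{-1/4}$ with $\hat C_{k,h}\sim\pm\pi^{-1/2}E^{-1/4}$.) With this amplitude, $h^2u_k'(1)^2\approx\pi^{-1}\sqrt{1-E}\,e^{-2\mathsf S_{k,h}/h}$, not $\frac{2}{\pi}h\,e^{-2\mathsf S_{k,h}/h}$. With your amplitude $(2/(\pi h))^{1/2}$ the claimed cancellation does not occur either.

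\textbf{Testing on $\chi u_k$ cannot give a sharp two-sided result.} Take a cutoff with $\chi=1$ on $[0,a]$ and $\chi(1)=0$. The quadratic form is $h^2\int\chi'^2u_k^2$, whose value depends on $\chi$. By Cauchy--Schwarz its minimum is $h^2/J$ with $J=\int_a^1u_k^{-2}$. It is attained at $\chi=J^{-1}\int_x^1u_k^{-2}$, that is, at $\chi u_k=\Phi_{k,h}-J^{-1}\Psi_{k,h}$ with $\Psi_{k,h}=\Phi_{k,h}\int_a^x\Phi_{k,h}^{-2}$.

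This correction by the growing solution (reduction of order) is the missing idea. It is exactly the paper's trial state $v_{k,h}$. There the cutoff is placed on $\Psi_{k,h}$ inside $[a_0,a_1]$, so the residual $c_{k,h}[T_h,\chi]\Psi_{k,h}$ is exponentially negligible by \eqref{eq:cond-a-i}.

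For an $h$-independent cutoff of $u_k$ itself, this fails. The residual is $\|(T_h-e_k)\chi u_k\|^2\approx 4h^2\int(x^2-e_k)\chi'^2u_k^2\gtrsim h^2/J$, which is at least of order $\lambda_k-e_k$. The spectral-projection argument with gap $\approx 4h$ needs it to be $o\bigl(h(\lambda_k-e_k)\bigr)$, so no lower bound follows. With the correct trial state, the endpoint Laplace method gives $h^2/J\approx 2h\sqrt{1-E}\,\Phi_{k,h}(1)^2\approx\frac{2}{\pi}h\,e^{-2\mathsf S_{k,h}/h}$.

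\textbf{Uniformity and the Kummer route.} For uniformity down to $e_k\approx\ell(h)$ you do not need a Bessel-type expansion. The paper uses the exact Laguerre normalization $C_k$ with Stirling. It combines this with the Plancherel--Rotach formula extended to $\phi\in[\epsilon,\infty)$ via the uniform expansion DLMF~(18.15.22), which has $O(k^{-1})$ errors on $[a,1]$. The hypothesis $h^{-1}\ell(h)\to\infty$ serves only to force $k\to\infty$.

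Your Kummer connection-formula route remains a sketch. The unspecified $\widetilde U$ would require uniform asymptotics of $U(k+\nu+\delta,\nu+1,e^{\pm i\pi}h^{-1})$, with parameter and argument both of size $h^{-1}$, and you do not supply these.
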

\begin{remark}\label{rem:main} A few remarks on \eqref{eq:cond-mu} and \eqref{eq:main-asy} are in order.
\begin{enumerate}[\rm i)] 
\item We may choose $\ell(h)=h^{1-\delta}$, with $0<\delta<1\,$.
If, in addition, $\delta<1/2$, then for any
$\gamma\in(\delta,1/2)$, the regimes covered by
Theorems~\ref{thm:ext-Gannot} and~\ref{thm:main} overlap in the range
$
h^{-\delta}\lesssim k\lesssim h^{-\gamma}.
$
\item When $k=k(h)\to+\infty$ and $kh\to0^+\,$, we get that $\mathsf S_{k,h}/h\sim 1/2h\,$, in accordance with \eqref{eq:Gannot*}. In Remark~\ref{rem:Gannot}, we will verify that the asymptotics in Theorem~\ref{thm:ext-Gannot} and Theorem~\ref{thm:main} match  in their overlapping region of validity.
\item The case $\rho^*\geq 1$ corresponds to energy levels  above the
ones allowed by \eqref{eq:cond-mu}, for which the tunneling effect
disappears.
One heuristic explanation is the following. Tunneling towards the
boundary $x=1$ occurs when the outer classical turning point lies in
the interior of the domain. For
\[
V_h(x):=x^2+h^2\frac{\nu^2-1/4}{x^2},
\]
and for an energy $E\gg h\,$, the outer turning point is given by
\[
x_+(E,h)
=
\left(
\frac{E}{2}
+
\sqrt{\frac{E^2}{4}
-h^2\left(\nu^2-\frac14\right)}
\right)^{1/2}
=
\sqrt{E}
+
\mathcal O\left(\frac{h^2}{E^{3/2}}\right).
\]
Depending on the sign of $\nu^2-1/4\,$, there may also be an inner
turning point near $x=0$, which is irrelevant for the tunneling
towards the boundary $x=1$. Thus, \eqref{eq:cond-mu} ensures that the
outer turning point remains in the interior of $(0,1)$ and stays away
from the boundary $x=1$.
\item A similar result holds for the Neumann realization, namely
\[\lambda_k^N(h)
	=
	e_k(h)
	-
	\frac{2}{\pi}\,h\, \exp\!\left(-\frac{2\mathsf S_{k,h}}{h}\right)
	\bigl(1+o(1)\bigr)\,,\]
%
which leads to 
\[\lambda_k^D(h)-\lambda_k^N(h)=
	\frac{4}{\pi}\,h\,\exp\!\left(-\frac{2\mathsf S_{k,h}}{h}\right)
	\bigl(1+o(1)\bigr)\,.\]
\end{enumerate}
\end{remark}

\subsubsection*{Application} 
Like in \cite[Corollary~1.3]{Gannot}, for a given integer $l \geq 0$, we can consider the Coulomb Hamiltonian in $L^2(0,1)\,$,
\[\mathcal H_h=-h^2\frac{d^2}{dy^2}+h^2\frac{l (l+1)}{y^2}-\frac{2}{y}\,,\]
with Dirichlet boundary conditions at $y=0$ and $y=1\,$. Denoting the negative eigenvalues by $E_{n}(h)$, $n\geq l+1$ and applying a change of function, we  map  an eigenspace of $\mathcal H_h$ to an eigenspace of $T_h$ with $\nu=2l +1$  (see the proof of Corollary~1.3 in \cite{Gannot}). More precisely, it holds
\[\frac{4}{\sqrt{-E_{n}(h)}}=L_n^2\lambda_k(hL_n^{-2})\quad\text{where $L_n^2=2\sqrt{-E_{n}(h)}$ and $n=k+l$\,.} \]
For $n=k+l$ and $k$ as in Theorem~\ref{thm:main}, we have $hL_n^{-2}\to0^+$ and $e_k(h)=4nh L_n^{-2}\,$. Applying Theorem~\ref{thm:main} with $\nu=2l +1\,$, we get for $n=k+l\,$,
\begin{itemize}
\item the  approximation modulo $\mathcal O (h^\infty)$
\[L_n^{-2}=\frac{nh}{2}+\mathcal O(h^\infty)\,,\]
\item the  asymptotics with exponentially small second term:
\[E_{n}(h)=-\frac{1}{n^2 h^2}+\frac{2}{2n^2h\pi}\exp\left(-\frac{1}{h}\int_0^1 \sqrt{(s^2-2n^2h^2)_+}\,ds\right)\bigl(1+o(1)\bigr)\,.\]
\end{itemize}

\subsection{Bohr-Sommerfeld rule}

We turn now to the study of eigenvalues corresponding to energies above  $\sup_{x\in (0,1)} V(x)\,$, i.e.  when the energy satisfies $E>1\,$, since $V(x)=x^2$.  This implies in particular that the condition in \eqref{eq:cond-mu} is violated. In simple terms, we consider eigenvalues with label $k$ satisfying
\[
1<\rho_0\leq 4kh
\leq
\rho_1\,,
\]
for some constants $\rho_0,\rho_1\,$.  This condition emerges naturally if  we think of  a Bohr-Sommerfeld quantization rule for energy levels that are $>1$ (see Section~\ref{s3}).

The asymptotic distribution of the eigenvalues is described in the Theorem~\ref{thm:supercritical-intro} below, in a regime where there is no
tunneling phenomenon as described in Theorem~\ref{thm:main}.

\begin{theorem}[Shifted Bohr-Sommerfeld rule]\label{thm:supercritical-intro}
Let $1<\rho_0<\rho_1<+\infty$\,. Then, uniformly for integers $k$ satisfying
\[
\rho_0\leq 4kh\leq \rho_1\,,
\]
we have, as $h\to0^+$\,,
\[
\lambda_k(h)
=
E_0\left(
4h\left(k+\frac{\nu}{2}-\frac14\right)
\right)
+\mathcal O(h^2)\,,
\]
where, for $\rho>1\,$,  $E_0(\rho)>1$ is uniquely determined by 
\[
I(E_0(\rho))=\frac{\pi\rho}{4}\,,
\]
with
\[
I(E)
=
\int_0^1\sqrt{E-y^2}\,dy
=
\frac12\sqrt{E-1}
+
\frac{E}{2}\arcsin(E^{-1/2})\,,
\qquad E>1\,.
\]
\end{theorem}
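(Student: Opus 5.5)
The proof follows a WKB/Bohr--Sommerfeld route. We work at energy $E\in[E_0(\rho_0)-\delta,\,E_0(\rho_1)+\delta]$, which is a compact subset of $(1,\infty)$. There the only turning point of $V_h(x)=x^2+h^2(\nu^2-1/4)x^{-2}$ is the singular one at the origin. On $[h^{1-\epsilon},1]$ the potential is a regular perturbation of $x^2<E$, so there is no classically forbidden region touching $x=1$.

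\textbf{Solution near the origin.} We build the solution $u_E$ that satisfies the Dirichlet (Friedrichs) condition at $0$. Close to $0$ the equation is a Bessel equation. Rescaling $x=h t$ turns $-h^2u''+h^2(\nu^2-1/4)x^{-2}u=Eu$ into the Bessel equation whose regular solution is $\sqrt{t}\,J_\nu(\sqrt{E}\,t)$. The exact quantity to use is Langer's modified phase
\[
\Phi_h(x;E)=\frac1h\int_0^x\sqrt{E-s^2}\,ds .
\]
The claim is that, uniformly for $E$ in the compact set and $x\in[x_0,1]$ with $x_0>0$ fixed,
\[
u_E(x)=A(E,h)\,(E-x^2)^{-1/4}\Bigl[\cos\Bigl(\Phi_h(x;E)-\frac{\pi\nu}{2}-\frac{\pi}{4}\Bigr)+\mathcal O(h)\Bigr].
\]
There are two ways to justify this. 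One is a matching argument: the Bessel asymptotics $J_\nu(z)\sim\sqrt{2/(\pi z)}\cos(z-\nu\pi/2-\pi/4)$ holds in the overlap region $h\ll x\ll1$, where $\Phi_h\approx\sqrt{E}\,x/h$, and the WKB expansion is then continued with $\mathcal O(h)$ errors. The cleaner way, which I would use, is to write the equation exactly through the Kummer/Whittaker function $M_{\kappa,\nu/2}(x^2/h)$ with $\kappa=E/(4h)$. Then the uniform large-$\kappa$ Bessel-type asymptotics of Dunster (1989) or Gabutti--Gatteschi apply directly: the function equals $J_\nu$ of the Liouville variable, with error $\mathcal O(1/\kappa)=\mathcal O(h)$, uniformly up to $x=1$. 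The Dunster expansion is written in terms of the Liouville variable $\zeta$. We check that $\sqrt{\zeta}$ equals $\Phi_h$ up to $\mathcal O(h)$; the $\nu^2$ term in the potential only contributes $\mathcal O(h)$ to the phase once the Langer shift $\nu^2-1/4\to\nu^2$ is accounted for.

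\textbf{Quantization condition.} Imposing $u_E(1)=0$ gives
\[
\frac1h I(E)-\frac{\pi\nu}{2}-\frac{\pi}{4}=\pi\Bigl(j-\frac12\Bigr)+\mathcal O(h),\qquad j\in\mathbb Z .
\]
Equivalently,
\[
I(E)=\pi h\Bigl(j+\frac{\nu}{2}-\frac14\Bigr)+\mathcal O(h^2)=\frac{\pi}{4}\cdot4h\Bigl(j+\frac\nu2-\frac14\Bigr)+\mathcal O(h^2).
\]
Now $I'(E)=\frac12\arcsin(E^{-1/2})>0$ is bounded away from $0$ on the compact set. By the implicit function theorem, each such $j$ therefore gives exactly one eigenvalue, equal to $E_0\bigl(4h(j+\nu/2-1/4)\bigr)+\mathcal O(h^2)$. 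Existence follows from a Rouché-type or intermediate-value argument on the real function $h^{-1}I(E)-\dots$. Uniqueness holds because between consecutive zeros the phase increases by $\pi$. Turning to the error terms, the left-hand sides above compare against $\pi h$ times integers. The phase error is $\mathcal O(h)$ in units of $h^{-1}I$, so the error in $I$ is $\mathcal O(h^2)$, and hence so is the error in $E$.

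\textbf{Labelling, the main obstacle.} We must show that $j=k$, that is, that no eigenvalues are lost or gained. I would count with Sturm oscillation: $\lambda_k$ is the energy at which $u_E$ has exactly $k-1$ interior zeros. For the WKB form above, the number of zeros in $(0,1)$ is the number of $j\ge1$ with $\pi(j-\frac12)<\Phi_h(1)-\frac{\pi\nu}{2}-\frac\pi4$. This requires that $u_E$ has no zeros in $(0,x_0)$ beyond those predicted by the Bessel phase. That can be checked by using the uniform Bessel approximation all the way down to $0$: $J_\nu$ has no zeros before its first positive one, and the Dunster asymptotics are uniform in $\zeta\in[0,\infty)$. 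The count gives $j=k$. As a consistency check, $k\approx E_0^{-1}$-inverse of $\rho/4h$ matches the range $\rho_0\le4kh\le\rho_1$ via $I(E_0(\rho))=\pi\rho/4$. The delicate point is controlling the uniform error near $x=0$ in the Dunster expansion so that zeros cannot cross; I expect this to be the main technical step.
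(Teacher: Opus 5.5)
Your overall route is the paper's, carried out on the solution rather than on its zeros. The paper also rests on Dunster's Bessel-type uniform expansion of $M_{\kappa,\nu/2}$, but it enters through Gabutti--Gatteschi's asymptotics for the positive $z$-zeros:
\begin{itemize}
\item by the index lemma (Lemma~\ref{lemma-index}), the Dirichlet condition becomes $m_k(\kappa_k,\mu)=1/h$;
\item inserting McMahon's expansion of $j_{\nu,k}$ into the action identity $S_W=S_B$ gives exactly your rule $I(E)=\pi h(k+\frac\nu2-\frac14)+\mathcal O(h^2)$, which is then inverted.
\end{itemize}
Your phase bookkeeping agrees with this. The $-\frac{\pi}{2}\gamma$ terms of $S_W$ and $S_B$ cancel, so $\kappa\sqrt{\zeta}=\Phi_h(x;E)+\mathcal O(h)$ on $[x_0,1]$. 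One correction: the Bessel-type expansion is uniform only for $z\le q\kappa$ with a fixed $q$ below the outer turning point (about $4$), not for all $\zeta\ge 0$. This is harmless here, because $z/\kappa\le 4/E$ and $E$ stays in a compact subset of $(1,\infty)$.

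The genuine gap is the labelling and uniqueness step, and the difficulty is not where you put it. Near $x=0$ nothing can go wrong: for small Bessel argument, $\operatorname{env}J_\nu=\sqrt2\,J_\nu$, so the remainder is relative and $u_E$ has no zero before the first Bessel zero. The trouble is in the oscillatory region. A remainder bounded by $\mathcal O(\kappa^{-1})\operatorname{env}J_\nu$ does two things: it pins every zero of $u_E$ to within $\mathcal O(h^2)$ of a zero of the model, and it forces a sign change there. But it controls no derivatives, so it cannot exclude three zeros clustered at one Bessel zero. For the same reason your intermediate value argument gives existence of an eigenvalue near $E_0(4h(j+\frac\nu2-\frac14))$ but not uniqueness. ``The phase increases by $\pi$'' is a property of the model, not of $E\mapsto u_E(1)$.

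In the paper this missing input is exactly Gabutti--Gatteschi's Theorem~2.1. It identifies the $k$-th positive zero of $M_{\kappa,\mu}$ with $j_{2\mu,k}$, and together with the index lemma it gives $j=k$ immediately.

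If you want to stay at the function level, add a Sturm comparison bound:
\begin{itemize}
\item On $[t_0h,1]$ one has $E-V_h\le E+|\nu^2-\frac14|\,t_0^{-2}$, so consecutive zeros of $u_E$ are at distance at least $c\,h$. This is much larger than the $\mathcal O(h^2)$ localization windows.
\item On $(0,t_0h]$, with $t_0\sqrt E<j_{\nu,1}$, there are no zeros.
\end{itemize}
Each window then contains exactly one zero. That gives the exact zero count of $u_E$ in $(0,1)$ at energies between the predicted values, hence $j=k$ by Sturm oscillation, and also uniqueness. Uniqueness alternatively follows from the bound $\lambda_{k+1}-\lambda_k>4h$ of Theorem~\ref{thm:gap}.
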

\begin{remark}\label{rempostth}
Theorem~\ref{thm:supercritical-intro} exhibits a
Bohr--Sommerfeld quantization rule adapted to the singular potential,
in which the eigenvalue label enters through the shifted quantity
$
k+\frac{\nu}{2}-\frac14\,,
$ as predicted in \eqref{eq:BS-generic-V}.
The correction $\nu/2-1/4$ reflects the contribution of the inverse
square singularity at $x=0$ and can be interpreted as a
Bohr-Sommerfeld-Langer correction. Notice in particular that, when $\nu=1/2$, the inverse square term
vanishes and the shifted quantity
$
k+\frac{\nu}{2}-\frac14
$
reduces simply to $k$. Accordingly, the Bohr--Sommerfeld quantization condition takes the form
$
I(E)=\pi kh\,.
$
Since $E>1$, there are no turning points in $(0,1)$, and this is
precisely the usual semiclassical quantization rule for a classically
allowed interval with Dirichlet boundary conditions at both endpoints, (see, for instance, \cite[Chapter~6, Section~9, Eq.~(9.05)]{Olver}). This interpretation will be discussed in more detail in
Section~\ref{s3} and in Appendix~\ref{AppA}.
\end{remark}

\subsection{Transition regime}
In order to understand the transition between the tunneling asymptotics (Theorem~\ref{thm:main})  and the shifted Bohr-Sommerfeld rule (Theorem~\ref{thm:supercritical-intro}), we investigate the regime
\[
	|4kh-1|=t h^\delta,
	\qquad
	t\in[t_0,t_1],
	\qquad
	0<\delta<\frac23.
\]
which we refer to it as critical.  This regime is interesting because the classical turning point
can reach the boundary both from the interior of the domain (when $4kh-1<-t_0h^\delta$) and also from  the exterior (when $4kh-1>  t_0 h^\delta$). Actually the nature of the transition from the left 
will be treated in Proposition \ref{prop:2.5} and Remark \ref{rem:2.6} and  is quite different from the right which is treated for $\delta < 2/3$ in Proposition \ref{lem:rough-transition}.
Nevertheless, when
 $\delta=2/3$ and $|4kh-1|\leq T h^{2/3}\,$, we obtain a
three-term expansion for the $k$'th eigenvalue.
\begin{theorem}\label{thm:transition-intro}
There exists a smooth decreasing function $\theta:\mathbb R\to\mathbb R^+$ such that, 	for every $T>0$ and for integers $k$ satisfying
	\begin{equation}\label{eq:transition-window-intro}
		|4kh-1|\leq T h^{2/3}\,,
	\end{equation}
	setting
	\begin{equation}\label{eq:tau-k-intro}
		\tau_k(h)
		:=
		\frac{1-4kh}{2^{2/3}h^{2/3}}\,,
	\end{equation}
	then, uniformly for these $k$'s, the following holds
	\begin{equation}\label{eq:transition-expansion-intro}
		\begin{split}
			\lambda_k(h)
			={}&e_k(h)
			+\frac{4h}{\pi}\theta\bigl(\tau_k(h)\bigr)\\
			&-\frac{4}{\pi 2^{2/3}}h^{4/3}
			\left(
			2\nu-2+\frac{4}{\pi}\theta\bigl(\tau_k(h)\bigr)
			\right)
			\theta'\bigl(\tau_k(h)\bigr)
			+\mathcal O(h^{5/3})\,,
		\end{split}
	\end{equation}
	as $h\to0^+\,$. 
\end{theorem}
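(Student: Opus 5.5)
The plan is to reduce the eigenvalue problem to the zeros of a confluent hypergeometric function in its first parameter, and then to use Dunster's uniform Airy-type asymptotics near the coalescence of the turning point with $z=1/h$. Concretely, the solution of $T_hu=\lambda u$ satisfying the Friedrichs condition at $x=0$ is
\[
u(x)=x^{\nu+1/2}e^{-x^2/(2h)}M\bigl(a,1+\nu,x^2/h\bigr),
\qquad a=\frac{2\nu+2}{4}-\frac{\lambda}{4h}.
\]
I write $\lambda=e_k(h)+4h\eta$, so that $a=-(k-1)-\eta$, and the eigenvalue condition is $M(a,1+\nu,1/h)=0$. Equivalently, in Whittaker form $M_{\kappa,\nu/2}(1/h)=0$ with $\kappa=\lambda/(4h)$. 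By \eqref{eq:1.3} we have $\eta\ge 0$, and a rough a priori bound $\eta=\mathcal O(1)$ follows from min-max comparison with $T_h^N$ or from Proposition~\ref{lem:rough-transition}. In the window \eqref{eq:transition-window-intro} the point $z=1/h$ lies within $\mathcal O(\kappa^{1/3})$ of the turning point $z=4\kappa$ of the Whittaker equation, which is the Airy regime.

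\textbf{Step 1: connection formula.} I express $M_{\kappa,\mu}$ as a combination of the recessive solution $W_{\kappa,\mu}(z)$ and a dominant companion. The connection coefficient contains the factor $\sin\bigl(\pi(\kappa-\mu-\tfrac12)\bigr)$, which up to sign equals $\sin(\pi\eta)$, while the other coefficient contains $\cos(\pi\eta)$. Inserting Dunster's (2026) uniform expansions, the recessive solution becomes $\mathrm{Ai}(\kappa^{2/3}\zeta)$ and the dominant one becomes $\mathrm{Bi}(\kappa^{2/3}\zeta)$, each with relative error $\mathcal O(\kappa^{-2/3})$ uniformly near $\zeta=0$. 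The Gamma-function prefactors should combine to a single common factor after Stirling's formula. The zero condition then becomes
\[
\tan(\pi\eta)=\frac{\mathrm{Ai}(s)}{\mathrm{Bi}(s)}\bigl(1+\mathcal O(h^{2/3})\bigr)+\mathcal O(h^{2/3}),
\qquad s=\kappa^{2/3}\zeta(1/h).
\]
This suggests defining
\[
\theta(\tau)=\arctan\frac{\mathrm{Ai}(\tau)}{\mathrm{Bi}(\tau)} .
\]
This $\theta$ is smooth and positive, and it is decreasing because the Wronskian $\mathrm{Ai}\,\mathrm{Bi}'-\mathrm{Ai}'\mathrm{Bi}=1/\pi$ is positive. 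It also satisfies $\theta(\tau)\sim\tfrac12e^{-\frac43\tau^{3/2}}$ as $\tau\to+\infty$, which matches Theorem~\ref{thm:main}, since $\frac{4h}{\pi}\cdot\frac12 e^{-2\mathsf S/h}=\frac{2h}{\pi}e^{-2\mathsf S/h}$.

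\textbf{Step 2: the Airy variable.} I Taylor expand the Liouville variable $\zeta$ at the turning point. Since $x_+=\sqrt{\lambda}+\mathcal O(h^2)$, this should give
\[
s=\frac{1-\lambda}{2^{2/3}h^{2/3}}+\mathcal O(h^{2/3}).
\]
Substituting $\lambda=e_k+4h\eta$ yields
\[
s=\tau_k(h)-\frac{h^{1/3}}{2^{2/3}}\bigl(2\nu-2+4\eta\bigr)+\mathcal O(h^{2/3}).
\]
The $\nu$-dependence of $\kappa$ is thereby absorbed into the shift of $\tau$.

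\textbf{Step 3: solving the implicit equation.} The equation reads $\pi\eta=\theta(s(\eta))+\mathcal O(h^{2/3})$, and the right-hand side is a contraction in $\eta$ with Lipschitz constant $\mathcal O(h^{1/3})$. It therefore has a unique solution with $\pi\eta=\theta(\tau_k)+\mathcal O(h^{1/3})$. I then substitute this back and Taylor expand $\theta$ to first order in the $h^{1/3}$ shift, which gives
\[
\pi\eta=\theta(\tau_k)-\frac{h^{1/3}}{2^{2/3}}\Bigl(2\nu-2+\frac4\pi\theta(\tau_k)\Bigr)\theta'(\tau_k)+\mathcal O(h^{2/3}).
\]
Multiplying by $4h/\pi$ gives \eqref{eq:transition-expansion-intro}. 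Because $|\tau_k|\le T/2^{2/3}$ is bounded, all Taylor remainders of $\theta$ are uniform. To identify the $k$-th eigenvalue with this root, I count zeros: as $\eta$ runs through $[0,1)$ there is exactly one root, so the labelling is fixed by continuity in $a$ and Sturm oscillation.

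\textbf{Main obstacle.} The delicate point is Step 1 together with the $\mathcal O(h^{2/3})$ error in Step 2. I have to extract from Dunster's uniform expansion a remainder of relative size $\mathcal O(h^{2/3})$ that is uniform in $\eta$ and in the bounded range of $s$. I also have to check that the Gamma and Stirling prefactors of the two connection coefficients really match to relative order $\mathcal O(h)$, so that no spurious $h^{1/3}$ term enters $\tan(\pi\eta)$. I would first verify this prefactor matching against the known large-$\tau$ limit (Theorem~\ref{thm:main}), and only then push the expansion of $\zeta$ to second order.
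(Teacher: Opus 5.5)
Your overall route is the paper's (Proposition~\ref{prop:critical}). Dunster's uniform Airy expansion reduces the spectral condition to $\cos(\pi a)\Ai(s)+\sin(\pi a)\Bi(s)=\mathcal O(h^{2/3})\operatorname{env}\Bi(s)$. The Airy variable satisfies $s=(1-\lambda)/(2^{2/3}h^{2/3})+\mathcal O(h^{2/3})$; this is the paper's $t_k$, defined by $1=\lambda_k+2^{2/3}t_kh^{2/3}\lambda_k^{1/3}$. A two-step bootstrap then produces the $h^{4/3}$ term, and your algebra in Steps~2--3 is correct. The Stirling matching you worry about is not needed, because the DLMF form of Dunster's expansion already has a single common prefactor.

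\textbf{The gap.} Your definition of $\theta$ and your labelling only work while $s$ stays to the right of the zeros of $\Bi$ and $\Ai$. But $T$ is arbitrary, so $\tau_k$ ranges over $[-2^{-2/3}T,2^{-2/3}T]$.
\begin{itemize}
\item $\Bi$ vanishes at $b_1\approx-1.17$. Once $T>2^{2/3}|b_1|$, your ratio form $\tan(\pi\eta)=\frac{\Ai}{\Bi}(1+\mathcal O(h^{2/3}))+\mathcal O(h^{2/3})$ is no longer uniform. The ``relative errors'' you attribute to Dunster also make no sense at zeros of $\Ai$ or $\Bi$.
\item The principal branch $\arctan(\Ai/\Bi)$ jumps at $b_1$ and is negative just to its left, so it is neither smooth nor positive.
\item The labelling step actually fails. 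The continuous phase of \eqref{eq:Airy-phase} satisfies $\theta(a_1)=\pi$ at the first zero $a_1\approx-2.34$ of $\Ai$, and $\theta(s)\sim\frac23(-s)^{3/2}$ as $s\to-\infty$. So for $\tau_k<a_1$ the true shift $\eta_k=(\lambda_k-e_k)/(4h)=\theta(\tau_k)/\pi+o(1)$ exceeds $1$, as the supercritical matching requires. Any root with $\eta\in[0,1)$ is then some $\lambda_{k-j}$ with $j\ge1$. Your Step~3 would therefore give the wrong formula on that part of the window.
\end{itemize}

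\textbf{How to repair it.}
\begin{itemize}
\item Divide by $A(s)=(\Ai^2+\Bi^2)^{1/2}>0$ and use the continuous determination $\Ai=A\sin\theta$, $\Bi=A\cos\theta$. This gives $\sin\psi(\lambda)=\mathcal O(h^{2/3})$ with $\psi(\lambda)=\pi a(\lambda)+\theta(s(\lambda))$. Hence $\pi\eta_k=\theta(s_k)+m_k\pi+\mathcal O(h^{2/3})$ with $m_k\in\mathbb Z$ still to be determined.
\item Since $\psi'(\lambda)=-\pi/(4h)+\mathcal O(h^{-2/3})<0$, each band $\{\psi\in m\pi+\mathcal O(h^{2/3})\}$ has width $\mathcal O(h^{5/3})$ in $\lambda$. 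Each band contains a zero, by the sign change of $\sin\psi$. It contains at most one, by Theorem~\ref{thm:gap}. So consecutive eigenvalues occupy consecutive bands, which forces $m_{k+1}=m_k$.
\item Anchor $m_k=0$ where $\tau_k\ge0$. There $\theta\le\theta(0)=\pi/6$, and $0\le\eta_k\le1$ follows from \eqref{eq:1.3} and the decoupling bound $\lambda_k\le e_{k+1}$: no exterior eigenvalue lies below $e_{k+1}$ when $4kh\le1$.
\end{itemize}

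\textbf{Smaller points.}
\begin{itemize}
\item The a priori bound $\eta=\mathcal O(1)$ does not follow from comparison with $T_h^N$, which only gives $\lambda_k^N\le\lambda_k$. It does not follow from Proposition~\ref{lem:rough-transition} either, since that is stated for $\delta<2/3$. Use Corollary~\ref{cor:kappa-k-critical}.
\item The contraction claim in Step~3 is unsupported, because the $\mathcal O(h^{2/3})$ remainder is not known to be Lipschitz in $\eta$. It is also unnecessary: bootstrapping on the actual eigenvalue, as the paper does, suffices.
\end{itemize}
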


The function $\theta$ is explicitely introduced in  \eqref{eq:Airy-phase}, as a phase associated with the Airy functions.
When we restrict to $|4kh-1|= th^{\delta}\,$, with $t$ fixed, we get by the smoothness of $\theta$ the leading order asymptotics of $\lambda_k(h)-e_k(h)$, which recovers the tunneling asymptotics of Theorem~\ref{thm:main} in the subcritical side ($4kh-1=-th^\delta$ and $t\gg 1$), while on the supercritical side ($4kh-1=th^\delta$ and $t\gg 1$) it recovers the threshold expansion of the Bohr--Sommerfeld formula in Theorem~\ref{thm:supercritical-intro} (more details are given in Proposition~\ref{lem:rough-transition} and Subsection~\ref{subsec:matching}). 

In the regime $\delta=2/3$ and $t=t(h)$ large, we do not obtain an asymptotics, but we prove that $\lambda_k(h)$ is still exponentially close to $e_k(h)$, see Proposition~\ref{prop:2.5} and Remark~\ref{rem:2.6}.

\subsection{Spectral gap}  For the applications in control theory, one needs 
a uniform estimate of the gap\footnote{We thank R. Vanlaere for pointing this question to us.}
\[\lambda_{k+1}(h)-\lambda_k(h)\]
valid for all $k\geq 1$ and $h>0\,$. The asymptotics in Theorems~\ref{thm:ext-Gannot}, \ref{thm:main} and \ref{thm:transition-intro} yield the asymptotic lower bound
\[\lambda_{k+1}(h)-\lambda_k(h)\geq 4h+o(h)\]
in the relevant spectral windows. 

We  present a non-asymptotic bound  consistent with the eigenvalue asymptotics (also discovered independently by R. Vanlaere\,\footnote{Personal communication.}).
\begin{theorem}\label{thm:gap}
For all $\nu\geq 0$, $h>0$ and $k\geq 1$, it holds
\begin{equation}\label{eq:gap}
\lambda_{k+1}(h)-\lambda_k(h)>4h\, .
\end{equation}
\end{theorem}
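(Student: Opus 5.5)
The plan is to prove \eqref{eq:gap} through an exact spectrum-generating (ladder) structure of the singular oscillator, combined with a Sturm/Prüfer zero-counting argument at $x=1$. For $\lambda\in\mathbb R$, let $\varphi_\lambda$ be the solution of $T_h\varphi=\lambda\varphi$ on $(0,1]$ that is regular at $0$, i.e.\ $\varphi_\lambda\sim x^{\nu+1/2}$. It is explicitly
\[
\varphi_\lambda(x)=x^{\nu+1/2}e^{-x^2/2h}M\bigl(a,\nu+1,x^2/h\bigr),\qquad \lambda=h(2\nu+2-4a).
\]
The $\lambda_k$ are exactly the values of $\lambda$ with $\varphi_\lambda(1)=0$, and $\lambda_k$ is characterised by the additional property that $\varphi_{\lambda_k}$ has exactly $k-1$ zeros in $(0,1)$. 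In the Prüfer angle $\vartheta(x;\lambda)$ of $\varphi_\lambda$, this says $\vartheta(1;\lambda_k)=k\pi$. Since $\vartheta(1;\lambda)$ is strictly increasing in $\lambda$, it suffices to show
\[
\vartheta(1;\lambda_k+4h)\le k\pi,
\]
with strict inequality or a separate argument excluding the equality case. Equivalently, in terms of Kummer zeros, consecutive zeros in $a$ of $a\mapsto M(a,\nu+1,1/h)$ must be more than $1$ apart.

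\textbf{Ladder operator.} Write $p=-h\,d/dx$ and $W=h^2(\nu^2-\tfrac14)x^{-2}$, so that $T_h=p^2+x^2+W$. Introduce the second-order operator
\[
R=x^2-p^2-W-(xp+px),
\]
with the real sign convention chosen so that the formal $su(1,1)$ relation
\[
[T_h,R]=4hR
\]
holds; this is a direct check of commutators. It follows that $R\varphi_\lambda$ solves the equation at energy $\lambda+4h$. Since $R\varphi_\lambda$ is also regular at $0$ (the exponent $\nu+1/2$ is preserved), we get $R\varphi_\lambda=c(\lambda)\varphi_{\lambda+4h}$. Using the equation to eliminate second derivatives, $R\varphi_\lambda$ becomes a first-order expression:
\[
R\varphi_\lambda=\alpha(x)\varphi_\lambda+\beta(x)\varphi_\lambda',\qquad \alpha=2x^2-\lambda-h,\quad \beta=2hx .
\]
At the Kummer level this is just a contiguous relation between $M(a,\cdot)$ and $M(a-1,\cdot)$. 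The constant $c(\lambda)$ is read off from the behaviour at $0$, and one checks $c(\lambda)\neq0$ for the relevant $\lambda$.

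\textbf{Zero counting.} Set $u=\varphi_{\lambda_k}$ and $v=R u=c\,\varphi_{\lambda_k+4h}$. The aim is to show that $v$ has at most $k-1$ sign changes in $(0,1)$ and does not vanish at $1$. I would do this by computing the Wronskian-type quantity
\[
W(x)=u v'-u'v=\beta u'^2+\bigl(\alpha'-\ldots\bigr)u u'+\ldots ,
\]
obtained by substituting $v=\alpha u+\beta u'$ and again using the equation. This makes $W$ a quadratic form in $(u,u')$ with explicit coefficients. At zeros of $u$ it equals $\beta u'^2=2hx\,u'^2>0$. The sign of $W$ at the zeros of $u$ controls the relative rotation of the Prüfer angles of $u$ and $v$: it forces the zeros of $u$ and $v$ to interlace, so the Prüfer angle of $v$ cannot overtake that of $u$ by a full $\pi$ on $(0,1)$. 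Near $x=0$ the two angles start at the same value. At $x=1$ we have $u(1)=0$, hence $v(1)=\beta(1)u'(1)=2h\,u'(1)\neq0$. Thus $\vartheta(1;\lambda_k+4h)$ is not a multiple of $\pi$, and it lies strictly between $(k-1)\pi$ and $k\pi$. Monotonicity in $\lambda$ then gives $\lambda_{k+1}>\lambda_k+4h$.

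\textbf{Main obstacle.} The delicate step is the interlacing/Prüfer comparison. The sign of $W$ is only immediate at the zeros of $u$, and one must also handle the zeros of $v$ and the region near $x=0$. I would first try a Picone-type identity, $(u'v-uv')'=4h\,uv$ up to the weight, directly from the two equations. Combining it with the explicit first-order form of $v$ should show that between two consecutive zeros of $u$ there is at most one zero of $v$. Strictness in \eqref{eq:gap} then comes from $v(1)\neq0$. If this becomes messy, the fallback is to work entirely with Kummer functions: use the contiguous relation and the known interlacing of zeros in $a$ of $M(a,b,z)$ and $M(a+1,b,z)$ for fixed $z=1/h$, which yields the separation $>1$ of consecutive zeros in $a$ and hence the gap $>4h$.
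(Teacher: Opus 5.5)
Your architecture matches the paper's: raise the $k$-th eigenfunction $u$ by a first-order ladder map to energy $\lambda_k+4h$, then show that the image $v$ vanishes at most once per nodal interval of $u$ and not at $x=1$. Concluding via monotonicity of the Prüfer angle would be a clean substitute for the paper's contradiction argument with Sturm--Picone. But the counting target you set is off by one, and false as stated. Since $\vartheta(1;\cdot)$ is strictly increasing and $\vartheta(1;\lambda_k)=k\pi$, necessarily $\vartheta(1;\lambda_k+4h)>k\pi$. Equivalently, $v$ has opposite signs at consecutive zeros of $u$ (and at $0^+$ versus $z_1$), so it has at least one zero in each of the $k$ nodal intervals $(0,z_1),\dots,(z_{k-1},1)$. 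Hence ``$v$ has at most $k-1$ sign changes'' and ``$\vartheta(1;\lambda_k+4h)\in((k-1)\pi,k\pi)$'' can never be proved. The correct and sufficient target is $\vartheta(1;\lambda_k+4h)<(k+1)\pi$: at most one zero of $v$ per nodal interval of $u$, none at the $z_i$, and $v(1)\neq0$. Your explicit ladder map also has a sign error. The raising map is $u\mapsto(\lambda+h-2x^2)u+2hx\,u'$ (the paper's $4hA$), so with $\alpha=2x^2-\lambda-h$ you need $\beta=-2hx$. As written, your $\alpha u+\beta u'$ solves neither the raised nor the lowered equation.

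The genuine gap is the upper bound itself, which you leave to a Picone identity that ``should show'' it. The sign you do compute is at the zeros of $u$. It only says that $v$ has opposite signs at consecutive zeros of $u$, i.e.\ an odd number of zeros in between, which is the lower bound again. With $W=uv'-u'v$, the identity $W'=-\tfrac4h\,uv$ merely encodes $\lambda_k+4h>\lambda_k$ and is compatible with several zeros of $v$ inside one nodal interval. What is missing is the sign of $W$ at the zeros of $v$, and this requires the specific ladder structure. Take $v=\alpha u+\beta u'$ with $\alpha=\lambda_k+h-2x^2$ and $\beta=2hx$. At a zero $y$ of $v$ (where $u(y)\neq0$), substitute $u'(y)=-\alpha u(y)/\beta$ and $h^2u''=\bigl(x^2+h^2(\nu^2-\tfrac14)x^{-2}-\lambda_k\bigr)u$. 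This gives
\[
W(y)=u(y)\,v'(y)=\frac{2h}{y}\,u(y)^2\Bigl(\nu^2-\bigl(\tfrac{\lambda_k}{2h}+1\bigr)^2\Bigr)<0,
\]
which is negative because $\lambda_k\ge e_1(h)=(2\nu+2)h$. So inside a nodal interval of $u$ every zero of $v$ is a crossing in the same direction, and there is at most one. This is precisely the paper's claim \eqref{eq:claim}, phrased there as $g'<0$ at zeros of $g=\phi+q$ via the Riccati equation for $\phi=u'/u$. Your fallback does not help either. The $a$-zeros of $M(a+1,b,z)$ are the shifted zeros $a_j-1$, so their interlacing with those of $M(a,b,z)$ is literally the inequality $a_j-a_{j+1}>1$ you are trying to prove.
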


The proof of this theorem relies on the Sturm-Liouville theory and the introduction of a creation-like operator.

\subsection{Possible extensions and heuristics}
\subsubsection{Origin of the study}
Our operator $T_h$ arises  naturally by fixing the angular momentum and keeping the radial variable, leaving us with the inverse square potential plus the harmonic potential. More generally, we can study the operator on $(0,1)$ with a general potential
\begin{equation}\label{eq:generic-operator}
T_{h,V}=-h^2 \frac{d^2}{dx^2}+h^2\frac{\nu^2-1/4}{x^2}+V\,.
\end{equation}
\subsubsection{Tunneling} Denoting the  Dirichlet and Neumann eigenvalues of $T_{h,V}$ by $\lambda_k^D(h,V)$ and $\lambda_k^N(h,V)$ respectively, and looking at the energy levels satisfying
\[\min_{[0,1]} V<\rho_* \leq \lambda_k^N(h,V)\leq \rho^*<\max_{[0,1]}V\,,\] 
it would be desirable to study whether we have
\begin{equation}\label{eq:splitting-ND}\lambda_k^D(h,V)-\lambda_k^N(h,V)\sim C_k(h)e^{-2S_{k,h}(V)/h}\,.
\end{equation}
The exponential decay is defined by the Agmon distance,
\[S_{k,h}(V)=\int_0^1\sqrt{(V(s)-E)_+}\,ds\,, \quad E=\frac 12 \bigl(\lambda_k^D(h,V)+\lambda_k^N(h,V)\bigr)\,,\]
while the prefactor $C_k(h)$ is expected to be of the form $\alpha h^{\beta}$, where both $\alpha$ ($\alpha\neq 0$) and $\beta$ depend possibly on $k$, $V$ and $\nu$. 

Note that it is possible to prove the weaker upper bound
\[\log (\lambda_k^D(h,V)-\lambda_k^N(h,V))\leq -\frac{2S_{k,h}(V)}{h}\bigl(1+o(1)\bigr)\,,\]
via  Agmon estimates (we illustrate this for the harmonic potential in Subsection~\ref{subsec:Agmon}).
If $V\in C^\infty(\R)$ is even, vanishes at $x=0\,$, positive for $x\not=0$, and satisfies
\[ V''(0)>0\,,\quad \lim_{|x|\to+\infty}V(x)>0\,,\]
then for a fixed $k$\,, the following asymptotics is proven by Gannot \cite[Theorem~2]{Gannot}
\begin{equation}\label{eq:Gannot-V}\lambda_k^D(h,V)=e_k(h,V)+\alpha_0(V) h^{-2k-\nu+2}e^{-2S_0(V)/h}\bigl(1+o(1)\bigr)\,,\end{equation}
where $e_k(h,V)$ is the eigenvalue of the operator
\[T_h^\infty=-h^2\frac{d^2}{dx^2}+h^2\frac{\nu^2-1/4}{x^2}+V\quad\text{in $\R_+$\,.}\]
The constants $S_0(V)$ and $\alpha_0(V)$ are explicit,
\[S_0(V)=\int_0^1\sqrt{V(t)}\,dt\,,\quad \alpha_0(V)=\frac{4 A\sqrt{V(1)}}{(k-1)!\Gamma(k+\nu)}\Bigl(\frac{V''(0)}{2}\Bigr)^{2k+\nu-1}\,,\]
where $A$ is dependent on $V$ and computed as a limit. 

We expect that 
\[\lambda_k^N(h,V)=e_k(h,V)-\alpha_0(V) h^{-2k-\nu+2}e^{-2S_0(V)/h}\bigl(1+o(1)\bigr)\,,\]
which would answer \eqref{eq:splitting-ND} for fixed $k$. That said, it would be interesting to extend \eqref{eq:Gannot-V} for higher energy levels as we did in Theorem~\ref{thm:main} for the harmonic potential $V(x)=x^2\,$.

The formula \eqref{eq:splitting-ND} is related to tunneling in double wells.   
In the particular case of a unique non-degenerate well at $x=0\,$, we extend
\[V_h(x)=h^2\frac{\nu^2-1/4}{x^2}+V(x)\,,\]
by even symmetry about $x=1\,.$ Neglecting the inverse square potential by taking $\nu=1/2\,$, we obtain a potential $\tilde V$ with double wells at $x=0$ and $x=2$, and the operator
\[\tilde T_h=-h^2\frac{d^2}{dx^2}+\tilde V\,,\]
in $L^2(0,2)\,$, with Dirichlet boundary condition at $x=0,2$. By the min-max principle, $\lambda_1^N(h,V)$ is the lowest eigenvalue of $\tilde T_h\,,$ while $\lambda_1^D(h,V)$ is the second eigenvalue of $\tilde T_h\,$. Thus, we have reduced to analyze the tunneling in double wells \cite{Ha, HS}. Thanks to well known tunneling asymptotics (\cite[p.~59, Eq. (4.5.21)]{He})
\[\lambda_1^D(h,V)-\lambda_1^N(h,V)=\lambda_2(h,\tilde V)-\lambda_1(h,\tilde V)= c_0\pi^{-1/2}h^{1/2}e^{-2S_0(V)/h}\bigl(1+o(1)\bigr)\,, \]
where $c_0>0$ is a constant dependent on $V$ and computed as a limit.
Notice that this sketch is only heuristic since the potentials are not regular.

\subsubsection{Formal derivation of a Bohr-Sommerfeld rule with Langer correction}
We turn to the study of the eigenvalues of the Dirichlet realization on $(0,1)$ of the operator $T_{h,V}$ introduced in \eqref{eq:generic-operator}. For $\nu=\frac12\,$, the inverse square potential disappears and the problem becomes regular. We  follow the dependence on the parameter $\nu$ in the vicinity of the regular case $\nu=1/2\,$. We denote the $k$'th eigenvalue by
\[\cE_k(\nu)=\lambda_k^D(h,V)\]
and we denote by $u_{k,\nu}$ an associated
$L^2$-normalized eigenfunction. We set
\[
E=\cE_k\left(\frac12\right)\,,
\]
and we assume that
\[
E>\max_{x\in[0,1]}V(x)\,.
\]
By the Feynman--Hellmann formula,
\[
\partial_\nu \cE_k(\nu)
=
2h^2\nu\int_0^1\frac{|u_{k,\nu}(x)|^2}{x^2}\,dx\,.
\]
We evaluate this expression heuristically at $\nu=\frac12$\,.
We introduce
\[
p(x,E)=\sqrt{E-V(x)}\,,
\qquad
I_V(E)=\int_0^1p(x,E)\,dx\,.
\]
For $\nu=1/2\,$, a normalized WKB eigenfunction has the form
\[
u_{k,1/2}(x)\sim
\frac{A}{\sqrt{p(x,E)}}
\sin\left(\frac1h\int_0^x p(s,E)\,ds\right)\,.
\]
Averaging the oscillations and using the $L^2$-normalization gives
\[
1\sim
\frac{A^2}{2}\int_0^1\frac{dx}{p(x,E)}
=
A^2I_V'(E)\,,
\]
and hence
\[
A^2\sim\frac{1}{I_V'(E)}\,.
\]
Making the change of variable $x=h\,y\,$, we obtain
\[
\int_0^1\frac{|u_{k,1/2}(x)|^2}{x^2}\,dx
=
\frac{1}{h}\int_0^{1/h}\frac{|u_{k,1/2}(hy)|^2}{y^2}\,dy\,.
\]
The singular weight suggests that the leading contribution comes from
the boundary scale $x=\mathcal O(h)\,$. Writing
\[
p_0=\sqrt{E-V(0)}\,,
\]
we obtain, for $y=\mathcal O(1)$\,,
\[
u_{k,1/2}(hy)\sim
\frac{A}{\sqrt{p_0}}\sin(p_0y)\,.
\]
Therefore, formally,
\[
\int_0^1\frac{|u_{k,1/2}(x)|^2}{x^2}\,dx
\sim
\frac{A^2}{h}
\int_0^\infty\frac{\sin^2 z}{z^2}\,dz
=
\frac{\pi A^2}{2h}
=
\frac{\pi}{2hI_V'(E)}\,,
\]
where we used the change of variable $z=p_0\, y$ and the classical
Dirichlet integral. It follows that
\[
\left.\partial_\nu \cE_k(\nu)\right|_{\nu=1/2}
\sim
\frac{\pi h}{2I_V'(E)}\,.
\]
Since
\[
\partial_\nu I_V(\cE_k(\nu))
=
I_V'(\cE_k(\nu))\,\partial_\nu \cE_k(\nu)\,,
\]
we obtain
\[
\left.
\partial_\nu I_V(\cE_k(\nu))
\right|_{\nu=1/2}
\sim
\frac{\pi h}{2}\,.
\]
A first-order Taylor expansion with respect to $\nu$ at $\nu=\frac12$
then gives, formally,
\[
I_V(\cE_k(\nu))
\sim
I_V(
\cE_k(\tfrac12))
+
\frac{\pi h}{2}\left(\nu-\frac12\right)\,.
\]
For $\nu=\frac12\,$, the problem is regular and the usual
Bohr--Sommerfeld quantization (see Olver \cite{Olver}) gives, since we have the Dirichlet condition at the two ends of the interval, 
\[
I_V(\cE_k(\tfrac12))=\pi hk+\mathcal O(h^2)\,.
\]
Consequently, we are led heuristically to
\begin{equation}\label{eq:BS-generic-V}
I_V(\cE_k(\nu))
\sim
\pi h\left(k+\frac{\nu}{2}-\frac14\right)\,.
\end{equation}
We have established \eqref{eq:BS-generic-V} for all $\nu\geq 0$ in the harmonic potential case, $V(x)=x^2\,$,  by exploiting the explicit solvability in terms of special functions; see Theorem~\ref{thm:supercritical-intro} above.

\subsubsection{Role of Special functions}
In the case of the harmonic potential $V(x)=x^2$, 
the eigenvalues are characterized in terms of the zeros of  the Kummer confluent hypergeometric function. That remarkable observation has been used in several papers \cite{BaurWeidl, Vanlaere}.

For a given eigenvalue $\lambda$ of $T_h$, a corresponding eigenfunction regular at the origin is given by
\begin{equation}\label{regular-solution}
u(x)
=
x^{\nu+\frac12}
e^{-x^2/2h}
M\!\left(
-\frac{\lambda-2(1+\nu)h}{4h}\,,
\,1+\nu\,,\,
x^2/h
\right)\,,
\end{equation}
where \(M(a,b,z)\), also denoted by \({}_1F_1(a;b;z)\)\,, is the Kummer
confluent hypergeometric function (see
\cite[pp.~262--264]{MOS1966} or
\cite[\S13.2, Eq.~13.2.4 and \S13.3]{DLMF}).
Imposing the Dirichlet boundary condition at \(x=1\) leads to the transcendental equation 
\begin{equation}\label{spectral-equation}
M\!\left(
-\frac{\lambda-2(1+\nu)h}{4h}\,,
\,1+\nu\,,\,
1/h
\right)
=0\,,
\end{equation}
and reduces the eigenvalue problem to the study of the zeros of the Kummer function with respect
to its first parameter (see \cite[Theorem~1.2]{Vanlaere}).

Although the eigenvalue condition is naturally expressed in terms of
the Kummer function, its asymptotic analysis is more conveniently
performed in the Whittaker setting. Indeed, the classical identity
\cite[\S13.14, Eq.~13.14.2]{DLMF}
\begin{equation}\label{Kummer-Whittaker}
M_{\kappa,\mu}(z)
=
e^{-z/2}z^{\mu+\frac12}
M\!\left(\mu-\kappa+\frac12\,,\,2\mu+1,\,z\right)
\end{equation}
shows that the two formulations are equivalent, where
\(M_{\kappa,\mu}\) denotes the Whittaker function. With
\begin{equation}\label{kappa-mu}
\kappa
=
\frac{\lambda}{4h}\,,
\qquad
\mu=\frac{\nu}{2}\,,
\end{equation}
the equation
\eqref{spectral-equation} becomes
\begin{equation}\label{Whittaker-spectral}
M_{\kappa,\mu}(1/h)=0\,.
\end{equation}
Thus, for $\lambda$ to be an eigenvalue, $z_h=1/h\gg1$ must be a zero of the Whittaker function.  For large $\kappa$, the asymptotic distribution of the positive zeros
of $M_{\kappa,\mu}(z)$ with respect to the variable $z$ was described
by Gabutti and Gatteschi~\cite{GG2001}.
By inverting their asymptotic expansion in the spectral relation
$M_{\kappa,\mu}(1/h)=0$\,, we obtain the eigenvalue asymptotics in Theorem~\ref{thm:supercritical-intro}.
\subsubsection{Summarizing picture}~\\
The different spectral regimes considered in this paper, together with
the corresponding asymptotic results, are summarized in Figure~\ref{fig:spectral-regimes*}.

\begin{figure}[h]
  \centering
  \resizebox{0.7\textwidth}{!}{%
\begin{tikzpicture}[
  >=Stealth,
  every node/.style={font=\small},
  hline/.style={dashed, line width=0.8pt},
]


\def\xone{9}        
\def\xmax{13}       
\def\ymax{23.5}     

\def\yhmg{2.0}       
\def\yhmg{2.0}
\def\yhmgp{3.4}      
\def\yrhosl{5.2}     
\def\yrhotl{6.5}     
\def\yrhoTh{8.2}     
\def\yrhoone{10.0}   
\def\yrhoThh{11.8}   
\def\yrhotoh{13.2}   
\def\yrhot1h{15.2}   
\def\yrhosh{17.8}    

\draw[->] (-2.5,0) -- (13.5,0) node[right] {$x$};
\draw[->] (0,-0.8) -- (0,\ymax) node[above] {$\rho$};

\draw[line width=1.5pt] (\xone,-0.3) -- (\xone,\ymax-0.5);
\node[below] at (\xone,-0.3) {$1$};
\node[below] at (0.15,-0.3) {$0$};

\draw[red, line width=2pt]
  plot[domain=0:1, samples=200, variable=\t]
  ({\t*\xone}, {\t*\t*10});

\draw[red, line width=2pt, dashed]
  plot[domain=1:1.45, samples=100, variable=\t]
  ({\t*\xone}, {\t*\t*10});

\node[right] at ({1.47*\xone}, {1.47*1.47*10+0.3}) {$V(x)=x^2$};

\filldraw[red] (\xone, \yrhoone) circle (3pt);

\def\hxl{0.9}   
\def\hxr{5.4}   

\draw[hline, blue]  (\hxl,\yhmg)  -- (\hxr,\yhmg);
\draw[hline, cyan!70!black] (\hxl,\yhmgp) -- (\hxr,\yhmgp);
\draw[hline, gray]  (\hxl,\yrhosl) -- (\hxr,\yrhosl);
\draw[hline, red]   (\hxl,\yrhotl) -- (\hxr,\yrhotl);
\draw[hline, blue]  (\hxl,\yrhoTh) -- (\hxr,\yrhoTh);
\draw[hline, blue]  (\hxl,\yrhoThh) -- (\hxr,\yrhoThh);
\draw[hline, red]   (\hxl,\yrhotoh) -- (\hxr,\yrhotoh);
\draw[hline, red]   (\hxl,\yrhot1h) -- (\hxr,\yrhot1h);
\draw[hline, green!60!black] (\hxl,\yrhosh) -- (\hxr,\yrhosh);

\def\lx{-1.3}   

\node[left] at (\lx, \yhmg)    {\textcolor{blue}{$h^{1-\gamma'}$}};
\node[left] at (\lx, \yhmgp)   {\textcolor{blue}{$h^{1-\gamma}$}};
\node[left] at (\lx, \yrhosl)  {\textcolor{green!60!black}{$\rho^*(<1)$}};
\node[left] at (\lx, \yrhotl)  {$1-t_0 h^\delta$};
\node[left] at (\lx, \yrhoTh)  {\textcolor{blue}{$1-Th^{2/3}$}};
\node[left] at (\lx, \yrhoone) {$1$};
\node[left] at (\lx, \yrhoThh) {\textcolor{blue}{$1+Th^{2/3}$}};
\node[left] at (\lx, \yrhotoh) {$1+t_0 h^\delta$};
\node[left] at (\lx, \yrhot1h) {$1+t_1 h^\delta$};
\node[left] at (\lx, \yrhosh)  {\textcolor{green!60!black}{$\rho^*(>1)$}};

\def\ax{0.35}   

\draw[<->, blue]          (\ax,0)        -- (\ax,\yhmg);
\draw[<->, cyan!70!black] (\ax,\yhmg)   -- (\ax,\yhmgp);
\draw[<->, green!60!black](\ax,\yhmgp)  -- (\ax,\yrhosl);
\draw[<->]                (\ax,\yrhosl) -- (\ax,\yrhotl);
\draw[<->]                (\ax,\yrhotl) -- (\ax,\yrhoTh);
\draw[<->, blue]          (\ax,\yrhoTh) -- (\ax,\yrhoThh);
\draw[<->]                (\ax,\yrhoThh)-- (\ax,\yrhotoh);
\draw[<->, red]           (\ax,\yrhotoh)-- (\ax,\yrhot1h);
\draw[<->]                (\ax,\yrhot1h)-- (\ax,\yrhosh);

\def\tx{3.2}   

\node[align=center] at (\tx, {(\yhmg)/2}) {
  \textcolor{blue}{Theorem 1.1}\\
  \textcolor{blue}{$\gamma'<\gamma<\frac{1}{2}$}\\
  \textcolor{blue}{(fixed and semi-excited states)}
};
%
\node[align=center] at (\tx, {(\yhmg+\yhmgp)/2}) {
  \textcolor{cyan!70!black}{Overlap (Remark 1.3)}
};
%
\node[align=center] at (\tx, {(\yhmgp+\yrhosl)/2}) {
  \textcolor{green!60!black}{Theorem 1.2}\\
  \textcolor{green!60!black}{$\ell(h)\leq e_k(h)\leq\rho^*$}\\
  \textcolor{green!60!black}{(tunneling regime)}
};

\node[align=center] at (\tx, {(\yrhosl+\yrhotl)/2}) {
  Upper bounds\\
  \textcolor{red}{(matching with tunneling)}
};

\node[align=center] at (\tx, {(\yrhotl+\yrhoTh)/2}) {
  No uniform\\
  asymptotics available
};

\node[align=center] at (\tx, {(\yrhoTh+\yrhoThh)/2}) {
  \textcolor{blue}{Theorem 1.6}\\
  \textcolor{blue}{$|\rho-1|\leq Th^{2/3}$}\\
  (transition zone, Airy)
};

\node[align=center] at (\tx, {(\yrhoThh+\yrhotoh)/2}) {
  No uniform\\
  asymptotics available
};

\node[align=center] at (\tx, {(\yrhotoh+\yrhot1h)/2}) {
 %
  $t\in[t_0,t_1],\;\rho=1+th^\delta$\\
  \textcolor{red}{(matching with Bohr-Sommerfeld)}
};

\node[align=center] at (\tx, {(\yrhot1h+\yrhosh)/2}) {
  No uniform\\
  asymptotics available
};

\node[align=center] at (\tx, {\yrhosh+1.5}) {
  \textcolor{green!60!black}{(Bohr--Sommerfeld regime)}\\
  \textcolor{green!60!black}{Theorem 1.4 $\quad\rho^*\leq\rho$}
};

\node at (\tx, \ymax-0.5) {Spectral parameter $\rho=4kh$};

\end{tikzpicture}}
  \caption{Summary of the different spectral regimes and the corresponding asymptotic results.} \label{fig:spectral-regimes*}
\end{figure}


%

\subsection{Organization}

The paper is organized as follows. Section~\ref{s2} is devoted to the tunneling
regime and to the proofs of Theorems~\ref{thm:ext-Gannot} and~\ref{thm:main}, including the low-energy
regime and a brief discussion of the Neumann problem.
Section~\ref{s3} is devoted to the supercritical regime and the proof of Theorem~\ref{thm:supercritical-intro}, using asymptotic
properties of Whittaker functions and their zeros.
Section~\ref{sec:transition} treats the transition near the critical energy $1$ as in Theorem~\ref{thm:transition-intro}, described
by uniform Airy asymptotics, and discusses its matching with the
subcritical and supercritical regimes.  Section~\ref{sec:gap} proves Theorem~\ref{thm:gap}.
Appendix~A discusses the Bohr--Sommerfeld--Langer quantization, while
Appendix~B reformulates our spectral results as asymptotic formulas for
the zeros of the Kummer function with respect to its first parameter.


\section{Tunneling asymptotics}\label{s2}

This section is devoted to the proofs of Theorems~\ref{thm:ext-Gannot} and~\ref{thm:main}. 
The proof of Theorem~\ref{thm:main} relies on three ingredients:
\begin{itemize}
    \item Agmon estimates (Subsection~\ref{subsec:Agmon}),
    \item Plancherel--Rotach asymptotics in the classically forbidden region (Subsection~\ref{subsec:Plancherel-Rotach}),
    \item Construction of an explicit trial state in the domain of the operator $T_h$ (Subsection~\ref{subsec:trial-state}).
\end{itemize}
The proof of Theorem~\ref{thm:ext-Gannot} follows the same scheme, with the Plancherel--Rotach asymptotics being replaced by elementary properties of the associated Laguerre polynomials (Subsection~\ref{subsec:vl-regime}). The section is concluded with a brief presentation of the Neumann problem (Subsection~\ref{subsec:Neumann}).
\subsection{Eigenfunction on $\mathbb R_+$}\label{subsec:Phi}~\\
Given a positive integer $k$, we introduce the function
    \begin{equation}\label{eq:def-Phi*}
\Phi_k (z) =C_k \, z^{\nu+\frac12}\,L_{k-1}^{(\nu)}(z^2)\, e^{-z^2/2}\,,
    \end{equation}
    where $L_{k-1}^{(\nu)}$ is the associated Laguerre polynomial of degree $k-1$ and $C_k$ is a positive constant  chosen so that $\Phi_k$ is normalized in $L^2(\R_+)$. 
We calculate $C_k$ from the orthogonality relations for Laguerre polynomials
\[
\int_{0}^{\infty} y^{\nu} L_{m}^{(\nu)}(y) L_{n}^{(\nu)}(y) e^{-y} \, dy = \frac{\Gamma(n + \nu + 1)}{n!} \delta_{mn}\,,
\]
which yield
\[\int_{0}^{\infty} y^{\nu} |L_{n}^{(\nu)}(y)|^2 e^{-y} \, dy = \frac{\Gamma(n + \nu + 1)}{n!},\quad n=0,1,2\ldots.\]
Thus, for $n=k-1$, we obtain by performing the change of variable $y=z^2$ that 
\begin{equation}\label{eq:vl-Ck}
C_k=\sqrt{\frac{2(k-1)!}{\Gamma(k+\nu)}}\,,
\end{equation}
and by  Stirling's approximation as $k\to+\infty\,$, we have
 \begin{equation}\label{eq:C-kh}
 C_{k}= 2^{\frac12}k^{-\frac{\nu}{2}}\biggl(1-\frac{\nu(\nu-1)}{4k}+\mathcal O\Bigl(\frac1{k^2}\Bigr)\biggr)\,.
    \end{equation}
   Now we introduce an $h$-dependent function defined on $\R_+$ as
    \begin{equation}\label{eq:def-Phi}
    \Phi_{k,h}(x)=h^{-1/4} \Phi_{k} \bigl(x/\sqrt{h}\bigr)\,,
    \end{equation}
where $\Phi_k$ is the function in \eqref{eq:def-Phi*}.     Note that $\Phi_{k,1}=\Phi_k$.  The function $\Phi_{k,h}$ is normalized in $L^2(\R_+)$ and  satisfies on $\mathbb R_+$,
    \[T_h\Phi_{k,h}=e_k(h)\Phi_{k,h}\,.\]
It will be used to construct a trial state satisfying the Dirichlet boundary condition at $x=1$. Towards that end, we need accurate information about the decay of $\Phi_{k,h}$ in the classically forbidden region $\{x\geq \sqrt{e_k(h)}\}\,$. 
\subsection{Agmon estimate and applications}\label{subsec:Agmon}~\\ 
We start with an Agmon estimate\footnote{The  Agmon estimate in Proposition~\ref{prop:dc-Phi} is valid in a neighborhood of $+\infty$ and can be easily extended to more general potentials.}. The novelty in comparison with for example \cite{HS} is that there is a singularity due to $\nu$.

    \begin{proposition}[rough decay of $\Phi_{k,h}$]\label{prop:dc-Phi}
    Let $0<\rho^*<\bar\rho<1\,$. There exist constants $C,h_0>0$ such that, for all $h\in(0,h_0]$ and for all integers $k$ satisfying $e_k(h)\leq \rho^*$, it holds 
    \[ \int_{0}^{+\infty}|e^{\beta/h}\Phi_{k,h}|^2dx\leq C\,,\quad \int_{\bar\rho^{1/2}}^{+\infty}|e^{\beta/h}\Phi'_{k,h}|^2\,dx\leq C\, h^{-2}\,,\]
    where 
    \[\beta(x)=\int_0^x\sqrt{(s^2-\bar\rho)_+}\,ds\,.\]
\end{proposition}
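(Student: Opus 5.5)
We apply the standard Agmon identity to the eigenfunction $\Phi_{k,h}$ of $T_h$ on $\R_+$. The weight is $e^{\beta/h}$, where $\beta$ is Lipschitz, vanishes on $[0,\bar\rho^{1/2}]$, and satisfies $|\beta'|^2=(x^2-\bar\rho)_+$. Since $\beta$ vanishes near $0$, the singular term $h^2(\nu^2-1/4)x^{-2}$ never interacts with the weight. This means the singularity only needs to be handled on the region where the weight is identically $1$. Because $\beta$ grows like $x^2/2$ at infinity, we first work with a truncated weight $\beta_R=\min(\beta,\beta(R))$, prove bounds uniform in $R$, and let $R\to\infty$ by monotone convergence. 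The Gaussian decay of $\Phi_{k}$ justifies all integrations by parts for fixed $h,k$.

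\textbf{Step 1 (Agmon identity).} For real $u=\Phi_{k,h}$ in the form domain and a bounded Lipschitz $\phi$, integration by parts gives
\[
\int_0^\infty h^2|(e^{\phi/h}u)'|^2+\Bigl(V_h-h^2\tfrac{|\phi'|^2}{h^2}\cdot h^{0}\Bigr)e^{2\phi/h}u^2\,dx=\int_0^\infty e_k(h)\,e^{2\phi/h}u^2\,dx,
\]
that is, $\int h^2|(e^{\phi/h}u)'|^2+(V_h-|\phi'|^2-e_k)e^{2\phi/h}u^2=0$, where $V_h=x^2+h^2(\nu^2-1/4)x^{-2}$. No boundary term arises at $0$, since $\Phi_{k,h}\sim x^{\nu+1/2}$ and $\phi$ is constant there.

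\textbf{Step 2 (splitting).} Take $\phi=\beta_R$. Where $x\geq \bar\rho^{1/2}$, we have $V_h-|\phi'|^2-e_k\geq \bar\rho-\rho^*-Ch^2\geq c>0$. Where $x\leq\bar\rho^{1/2}$, the weight equals $1$, and we use
\[
\int_0^{\bar\rho^{1/2}}\Bigl(h^2|u'|^2+V_hu^2\Bigr)\geq \int_0^{\bar\rho^{1/2}}x^2u^2 .
\]
The delicate point is $\nu<1/2$, where $V_h$ is negative near $0$. There one must use Hardy's inequality $\int h^2|u'|^2\geq \frac{h^2}{4}\int x^{-2}u^2$ locally, with a cutoff $\chi$ and an IMS localization. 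The cleaner alternative is to bound the whole region directly: the negative part of the integrand on $[0,\bar\rho^{1/2}]$ is at most $\int_0^{\bar\rho^{1/2}}|V_h-e_k|_- u^2$, which after the Hardy absorption is bounded by $e_k\|u\|^2\leq\rho^*$. Hence
\[
c\int_{\bar\rho^{1/2}}^\infty e^{2\beta_R/h}u^2+h^2\int|(e^{\beta_R/h}u)'|^2\leq C\|u\|^2=C,
\]
which, together with $\|u\|=1$ on the inner part, gives the first estimate uniformly in $R$. Making this Hardy/IMS step uniform in $k$ is the main obstacle. I would handle it with the quadratic-form inequality $h^2\int|v'|^2+h^2(\nu^2-1/4)\int x^{-2}v^2\geq 0$, valid for all $\nu\geq0$, applied to $v=e^{\phi/h}u$, which is legitimate because the weight is constant near $0$.

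\textbf{Step 3 (derivative).} From Step 2 we get $h^2\int|(e^{\beta/h}u)'|^2\leq C$. Expanding,
\[
e^{\beta/h}u'=(e^{\beta/h}u)'-h^{-1}\beta'e^{\beta/h}u .
\]
The difficulty is that $\beta'$ is unbounded, growing like $x$. To control $\int x^2e^{2\beta/h}u^2$, I would rerun Steps 1–2 with the slightly smaller weight $(1-\eta)\beta$. This leaves a positive margin $\gtrsim\eta x^2$ in $V_h-|\phi'|^2-e_k$ for large $x$, which controls the weighted moment $\int x^2 e^{2\phi/h}u^2$. The factor $e^{2\eta\beta/h}$ that is lost this way is recovered by instead using the weight $\beta$ relative to $\bar\rho'\in(\rho^*,\bar\rho)$, since $\beta_{\bar\rho'}\geq\beta_{\bar\rho}+c(x)$ grows faster. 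Alternatively, one can bound $\int_{\bar\rho^{1/2}}^\infty|e^{\beta/h}u'|^2$ directly by multiplying the equation by $e^{2\beta/h}u$ and integrating by parts, using the first estimate together with $\int x^2e^{2\beta/h}u^2\leq C$ obtained from the margin argument. Either route yields $\int_{\bar\rho^{1/2}}^\infty|e^{\beta/h}\Phi_{k,h}'|^2\leq Ch^{-2}$.
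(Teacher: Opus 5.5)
Your Steps 1--2 follow the paper's argument. Both use the Agmon identity with the weight frozen beyond $R$, Hardy's inequality on $(0,\bar\rho^{1/2})$ where the weight is $1$, the margin $\bar\rho-\rho^*-h^2/(4\bar\rho)$ on $(\bar\rho^{1/2},\infty)$, and monotone convergence as $R\to\infty$.

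Two corrections are needed in Step 2.
\begin{enumerate}
\item Use Hardy only on $(0,\bar\rho^{1/2})$, and outside bound $h^2|\nu^2-\tfrac14|x^{-2}\le h^2/(4\bar\rho)$, as the paper does. Applying the form inequality globally on $\R_+$ would use up the outer kinetic term that you need in Step 3.
\item The kinetic bound you obtain holds on $(\bar\rho^{1/2},\infty)$ only. Your bound on $h^2\int_0^\infty|(e^{\beta_R/h}u)'|^2$ is false for $\nu=0$: there $\Phi_{k,h}\sim cx^{1/2}$ at $0$, so $\Phi_{k,h}'\notin L^2$ near $0$.
\end{enumerate}
There is also no uniformity-in-$k$ obstacle: Hardy's constant is universal, and only $e_k(h)\le\rho^*$ enters.

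Step 3 addresses a point the paper's proof passes over. The paper stops at $\int_{\bar\rho^{1/2}}^\infty|h(e^{\beta/h}\Phi_{k,h})'|^2\le C$, whereas the statement concerns $e^{\beta/h}\Phi_{k,h}'$. Because $\beta'=\sqrt{x^2-\bar\rho}$ is unbounded, closing this gap requires the moment bound $\int(x^2-\bar\rho)_+e^{2\beta/h}\Phi_{k,h}^2\le C$.

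Your $(1-\eta)\beta$ device does not give this bound. It yields the moment only with weight $e^{2(1-\eta)\beta/h}$. The missing factor has size about $e^{\eta x^2/h}$ at infinity, and switching to $\bar\rho'$ cannot recover it, since $(1-\eta)\beta_{\bar\rho'}<\beta_{\bar\rho}$ for large $x$. Your second route needs the same moment, so it inherits the problem.

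The $\bar\rho'$ idea works on its own. Apply the first estimate with $\bar\rho'=(\rho^*+\bar\rho)/2$, and note that on $[\bar\rho^{1/2},\infty)$
\[
\beta_{\bar\rho'}'-\beta_{\bar\rho}'=\frac{\bar\rho-\bar\rho'}{\sqrt{x^2-\bar\rho'}+\sqrt{x^2-\bar\rho}}\geq\frac{\bar\rho-\bar\rho'}{2x}\,.
\]
Hence $\beta_{\bar\rho'}-\beta_{\bar\rho}\ge\frac{\bar\rho-\bar\rho'}{2}\log(x/\bar\rho^{1/2})$ there, and $x^2e^{2\beta_{\bar\rho}/h}\le\bar\rho\,e^{2\beta_{\bar\rho'}/h}$ once $h\le(\bar\rho-\bar\rho')/2$. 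The gain is only the power $x^{(\bar\rho-\bar\rho')/h}$, which beats $x^2$ precisely because $h$ is small. With this moment bound, the identity $e^{\beta/h}\Phi'=(e^{\beta/h}\Phi)'-h^{-1}\beta'e^{\beta/h}\Phi$ gives the stated $Ch^{-2}$.
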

\begin{proof}
For simplicity, we skip the reference to $k$ and $h$ and write $\Phi=\Phi_{k,h}\,$. Let $ u=e^{ \beta/h}\Phi$ and $\tilde u=e^{\tilde \beta/h}\Phi\,$, where $\tilde \beta$ is defined by
\[ \tilde \beta(x)=\begin{cases}
    \beta(x)&x< R\,,\\
    \beta(R)&x\geq R\,.
\end{cases}\]
Here $R>1$. Writing
    \[\langle (T_h-e_k(h))\Phi,e^{2\tilde \beta/h}\Phi\rangle=0\,,\]
    and integrating by parts (on $\mathbb R_+$), we obtain the identity
    \[\int_0^{+\infty}\Big( |h \tilde u '|^2 +h^2\frac{\nu^2-1/4}{x^2}| \tilde u|^2+(x^2-e_k(h)-|\tilde \beta'|^2)| \tilde u|^2\Bigr)\,dx=0\,,\]
    which can be written in the form
     \[
     \begin{aligned}
     \int_0^{+\infty} |h \tilde u'|^2\,dx\, +\, h^2\int_{0}^{+\infty} \frac{\nu^2-1/4}{x^2}|\tilde u|^2\,dx +  \int_{\bar \rho^{1/2}}^{+\infty}(x^2-e_k(h)-|\tilde \beta'|^2)| \tilde u|^2\,dx\\ 
     =  \int_{0}^{\bar \rho^{1/2}} (e_k(h)-x^2)\, | \tilde u|^2\,dx \,.
     \end{aligned}
     \]
    Using $|\tilde\beta'(x)|^2\leq (x^2-\bar\rho)_+$, $e_k(h)\leq\rho^*$, and the Hardy inequality
    \[\int_0^{\rho_1} \frac{|\tilde u(x)|^2}{x^2}\,dx\leq 4\int_0^{\rho_1}| \tilde u'(x)|^2\,dx\,,\]
 with $\rho_1=\bar\rho^{1/2}$, we deduce that,
 \[\int_{\bar\rho^{1/2}}^{+\infty}|h  \tilde u'|^2\,dx
    +\int_{\bar\rho^{1/2}}^{+\infty}\Bigl(\bar\rho-\rho^*-\frac{h^2}{4\bar\rho}\Bigr)|\tilde u|^2\,dx
    \leq \rho^*\int_{0}^{\bar\rho^{1/2}}|\tilde u|^2\leq \rho^*\,.\] 
Here we have also used that,  
for $x^2\leq \bar\rho\,,$ we have $\tilde\beta(x)=\beta(x)=0$ and $\tilde u=\Phi$, where $\Phi$ is normalized in  $\R_+$.\\

 We put $A=\frac{\bar\rho-\rho^*}{2}>0$ and we choose $h_0$ sufficiently small so that, for $h\in(0,h_0]\,$, we have  $\bar\rho-\rho^*-\frac{h^2}{4\bar\rho}\geq A\,$. Thus, 
    \[\int_{\bar\rho^{1/2}}^{+\infty} |h \tilde u'|^2\,dx
    +A\int_{0}^{+\infty}| \tilde u|^2\,dx\\
    \leq  2 A\int_0^{\bar\rho^{1/2}}|\Phi|^2\,dx+\rho^*\leq A+1
  \,.\] 
 Since $\tilde u=u$ on $(0,R)$, we get
     \[\int_{\bar\rho^{1/2}}^{R} |h  u'|^2\,dx
    +A\int_{0}^{R}|u|^2\,dx\\
    \leq 2A+1
  \,.\]
  Taking $R\to+\infty\,$, we get by monotone convergence that
    \[\int_{\bar\rho^{1/2}}^{+\infty}|{ hu'}|^2\,dx
    +A\int_0^{+\infty}|u|^2\leq C\,.\] With $C= (2A+1)$, we obtain the desired inequality.
\end{proof}

We present a short proof of Vanlaere's bound \eqref{eq:Vanlaere} via the  Agmon estimate in Proposition~\ref{prop:dc-Phi}  with a nearly optimal decay rate.

\begin{proposition}\label{prop:Vanlaere}
  Let $0<\rho^*<1\,$ and set
 \[\alpha=2\int_0^1\sqrt{(s^2-\rho^*)_+}\,ds.\]
For any $\eta\in(0,1)$, there exists $h_0>0$ such that, for all $h\in(0,h_0]$ and for all integers $k$ satisfying $e_k(h)\leq \rho^*\,$, we have
\[e_k(h)\leq \lambda_k(h)\leq e_k(h)+e^{-(1-\eta)\alpha/h}\,.\]
\end{proposition}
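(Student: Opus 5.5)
The lower bound $\lambda_k(h)\ge e_k(h)$ is already \eqref{eq:1.3}, so only the upper bound needs an argument. I would use the min-max principle with a $k$-dimensional trial space obtained by cutting off the first $k$ eigenfunctions of $T_h^\infty$.

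\emph{Trial space.} Fix $\bar\rho\in(\rho^*,1)$, close to $\rho^*$, and a cutoff $\chi\in C^\infty([0,1])$ with $\chi=1$ on $[0,\bar\rho^{1/2}]$, $\chi(1)=0$ and $0\le\chi\le1$. Set $\psi_j=\chi\Phi_{j,h}$ for $1\le j\le k$. Since $e_j(h)\le e_k(h)\le\rho^*$, each $\psi_j$ lies in the form domain of $T_h^D$ and Proposition~\ref{prop:dc-Phi} applies to every $\Phi_{j,h}$.

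\emph{Matrix estimates.} Write $\beta(x)=\int_0^x\sqrt{(s^2-\bar\rho)_+}\,ds$ as in Proposition~\ref{prop:dc-Phi}. The function $1-\chi$ and $\chi'$ are supported in $[\bar\rho^{1/2},1]$, and $\beta$ is increasing. Hence any integral over the support of $1-\chi$ or $\chi'$ picks up a factor $e^{-2\beta(\bar\rho^{1/2})/h}$, which alone is not good enough. To gain the full factor, I would choose $\chi$ supported in $[0,1-\epsilon]$ and equal to $1$ on $[0,1-2\epsilon]$. Then the error region is $[1-2\epsilon,\infty)$, where $\beta\ge\beta(1-2\epsilon)$, and $\beta(1-2\epsilon)\to\int_0^1\sqrt{(s^2-\rho^*)_+}\,ds=\alpha/2$ as $\bar\rho\downarrow\rho^*$ and $\epsilon\to0$. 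By the IMS formula,
\[
q(\psi_i,\psi_j)=e_j(h)\langle\chi\Phi_{i,h},\chi\Phi_{j,h}\rangle+h^2\int|\chi'|^2\Phi_{i,h}\Phi_{j,h}\,dx .
\]
Proposition~\ref{prop:dc-Phi} together with Cauchy--Schwarz then gives
\[
\bigl|\langle\psi_i,\psi_j\rangle-\delta_{ij}\bigr|\le Ce^{-2\beta(1-2\epsilon)/h},\qquad \bigl|q(\psi_i,\psi_j)-e_j(h)\delta_{ij}\bigr|\le C\epsilon^{-2}e^{-2\beta(1-2\epsilon)/h}.
\]
Both constants are uniform in $i,j,k$.

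\emph{Main obstacle: uniformity in $k$.} The number $k$ of trial functions can be as large as $\rho^*/(4h)$, so entrywise bounds must be summed into operator-norm bounds. A Schur/Frobenius bound costs at most a factor $k\le C/h$, which is harmless against the exponential. Let $G$ be the Gram matrix and $Q$ the form matrix. Then $\|G-I\|\le Ch^{-1}e^{-2\beta(1-2\epsilon)/h}$, so $G$ is invertible and the $\psi_j$ span a $k$-dimensional space. Min-max gives
\[
\lambda_k(h)\le\max\operatorname{spec}\bigl(G^{-1/2}QG^{-1/2}\bigr)\le e_k(h)+Ch^{-1}e^{-2\beta(1-2\epsilon)/h},
\]
where I also use $e_j\le e_k$ and $\|Q\|\le1$.

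\emph{Choice of parameters.} Given $\eta$, choose $\bar\rho$ and $\epsilon$ so that $2\beta(1-2\epsilon)>(1-\eta/2)\alpha$. The polynomial factor $Ch^{-1}\epsilon^{-2}$ is then absorbed by $e^{-\eta\alpha/(2h)}$ for $h$ small, which yields $\lambda_k(h)\le e_k(h)+e^{-(1-\eta)\alpha/h}$.
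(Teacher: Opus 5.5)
Your proposal is correct and follows the paper's argument: min-max on the span of $\chi\Phi_{j,h}$, $j\le k$, with a cut-off equal to $1$ up to $1-O(\varepsilon)$, the uniform Agmon bound of Proposition~\ref{prop:dc-Phi}, and then $\bar\rho\downarrow\rho^*$, $\varepsilon\to0$ to approach $\alpha$; your Gram-matrix/Schur bookkeeping makes explicit the uniformity in $k$ (at the harmless cost of a factor $h^{-1}$), which the paper leaves implicit. The only slip is cosmetic: for $i\neq j$ the IMS identity gives $\tfrac{e_i+e_j}{2}\langle\psi_i,\psi_j\rangle$ rather than $e_j\langle\psi_i,\psi_j\rangle$, which changes nothing because $\langle\psi_i,\psi_j\rangle$ is exponentially small off the diagonal.
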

\begin{proof}
The lower bound  was established in \eqref{eq:1.3}.

We turn to the proof of the upper bound.  Let 
    \[0<\varepsilon<\frac{1-\rho^*}{4},\quad \bar\rho=\rho_*+\frac{\varepsilon}{2},\quad \alpha_\varepsilon
      =2\beta\bigl(1-\varepsilon\bigr),
    \] 
    where $\beta$ is defined in Proposition~\ref{prop:dc-Phi}. Choose a cut-off function $\chi\in C^\infty(\R)$ such that
    \[\chi=1\text{ on $(-\infty,1-\frac{\varepsilon}{2}],$}\quad \chi=0\text{ on $[1-\frac{\varepsilon}{4},+\infty)$}\,.\]
      Consider the subspace $M=\{\chi\Phi_{j,h}\colon 1\leq j\leq k\}$.  For sufficiently small $h$, it follows from Proposition~\ref{prop:dc-Phi} that
    $\dim M=k$ and that
    \[\sup_{u\in M\setminus\{0\}}\frac{\langle T_h\phi,\phi\rangle}{\|\phi\|^2}\leq e_k(h)+e^{-\alpha_\varepsilon/h}\,,\]
    since the commutator terms in $[\chi_h,T_h]\Phi_{j',h}\Phi_{j,h}$ (with $j\leq k\,,\, j'\leq k$) are supported in the interval  $(1-\frac{\varepsilon}{2},1-\frac{\varepsilon}{4})$, and $2\beta(1-\frac{\varepsilon}{2})>\alpha_\varepsilon$. 
    
    Thus, by the min-max principle, we obtain that $\lambda_k(h)\leq e_k(h)+e^{-\alpha_\varepsilon/h}.$  This proves the claimed upper bound, since for a given $\eta\in(0,1)$, we can choose $\varepsilon$ sufficiently small so that $|\alpha_\varepsilon -\alpha|<\eta$.\end{proof}

\begin{remark}\label{rem:prop2.1}
The following almost optimal upper bound results from Proposition~\ref{prop:Vanlaere},
\[0\leq \lambda_k(h)-e_k(h)\leq \exp\left(-\frac{\alpha+o(1)}{h}\right)\,.\]
To write down the asymptotics,  we will use precise decay estimates given by the Plancherel-Rotach asymptotics for the associated Laguerre polynomials.
\end{remark}

We will need the following  direct consequence of Proposition~\ref{prop:Vanlaere}.
\begin{corollary}\label{corol:gap}
    Let $0<\rho^*<1\,$. There exists $h_0>0$ such that, if $h\in(0,h_0]$ and $e_k(h)\leq \rho^*\,$, then
    \[|\lambda_j(h)-e_k(h)|\geq 2h\quad\text{for all $j\not=k$\,.}\]
\end{corollary}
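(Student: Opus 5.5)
The plan is to combine the two-sided bound of Proposition~\ref{prop:Vanlaere} with the universal comparison \eqref{eq:1.3} and the explicit spacing of the model eigenvalues $e_j(h)=(4j+2\nu-2)h$. The numbers $e_j(h)$ are separated by exactly $4h$. So it will be enough to locate each $\lambda_j(h)$ within $o(h)$ of $e_j(h)$ when $e_j(h)$ is below $\rho^*$, and to show that every $\lambda_j(h)$ with $j>k$ lies far above $e_k(h)$.

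\emph{Indices $j\geq k+1$.} By \eqref{eq:1.3}, $\lambda_j(h)\geq e_j(h)\geq e_{k+1}(h)=e_k(h)+4h$. Hence $\lambda_j(h)-e_k(h)\geq 4h\geq 2h$, with no restriction on $h$.

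\emph{Indices $j\leq k-1$.} Since $e_j(h)\leq e_k(h)\leq\rho^*$, Proposition~\ref{prop:Vanlaere} (with, say, $\eta=1/2$) applies to $j$ with constants uniform in $j$. It gives
\[
\lambda_j(h)\leq e_j(h)+e^{-\alpha/(2h)}\leq e_k(h)-4h+e^{-\alpha/(2h)}\,,
\]
because $e_j(h)\leq e_{k-1}(h)=e_k(h)-4h$. Choosing $h_0$ so small that $e^{-\alpha/(2h)}\leq 2h$ for $h\in(0,h_0]$, we obtain $e_k(h)-\lambda_j(h)\geq 2h$.

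The only point to check is uniformity. The $h_0$ supplied by Proposition~\ref{prop:Vanlaere} must not depend on the index. This holds because the proposition is stated for all $k$ with $e_k(h)\leq\rho^*$, and that condition is inherited by every smaller index $j$. There is no real obstacle beyond this observation. Absorbing the exponentially small error into $2h$ is immediate since $\alpha>0$ for $\rho^*<1$.
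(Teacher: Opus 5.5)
Your proof is correct and follows the paper's argument exactly: the universal lower bound \eqref{eq:1.3} handles $j\geq k+1$, and the upper bound of Proposition~\ref{prop:Vanlaere}, which is uniform in the index, handles $j\leq k-1$. Your explicit choice $\eta=1/2$, which gives the error $e^{-\alpha/(2h)}$, is slightly more careful than the paper's shorthand $e^{-\alpha/h}$.
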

\begin{proof}
For $j\geq k+1\,,$ the result in the corollary follows from the universal lower bound \eqref{eq:1.3}:
\[\lambda_j(h)\geq e_{k+1}(h)=e_k(h)+4h\,.\]
For $j\leq k-1,$ the result follows from the asymptotic upper bound in Proposition~\ref{prop:Vanlaere},
\[\lambda_j(h)\leq\lambda_{k-1}(h)\leq  e_{k-1}(h)+e^{-\alpha/h}=e_k(h)-4h+e^{-\alpha/h}\,.\]
\end{proof}

We can push the upper bound in Proposition~\ref{prop:Vanlaere} up to $\rho^*\sim 1$.

\begin{proposition}\label{prop:2.5}
 Let $t>0$ and $0<\delta<2/3$ be fixed.  For every $\eta\in(0,1)$, there exist $h_0>0$ such that, for all $h\in(0,h_0]$ and for all integers $k$ satisfying $e_k(h)\leq 1-th^\delta\,$, we have
\[\lambda_k(h)\leq e_k(h)+\exp\left(-\frac{2(1-\eta)}{3}t^{3/2}h^{\frac{3\delta}{2}-1}\right)\,.\]
\end{proposition}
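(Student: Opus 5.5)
We mimic the proofs of Propositions~\ref{prop:dc-Phi} and~\ref{prop:Vanlaere}, but now with $h$-dependent parameters. The turning-point gap $1-e_k(h)\geq th^\delta$ shrinks with $h$, so every constant must be tracked in powers of $h^\delta$. Fix $\eta$ and set
\[
\bar\rho=1-(1-\eta_1)\,t h^\delta,\qquad \varepsilon=\eta_2\, t h^\delta ,
\]
with small $\eta_1,\eta_2\in(0,1)$ to be chosen in terms of $\eta$. Let $\beta(x)=\int_0^x\sqrt{(s^2-\bar\rho)_+}\,ds$ as before.

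\emph{Agmon estimate.} First I redo the argument of Proposition~\ref{prop:dc-Phi} for $\Phi_{k,h}$ with $e_k(h)\leq 1-th^\delta$. Two terms need care.
\begin{itemize}
\item The coercivity constant becomes $A=\tfrac12(\bar\rho-e_k(h))\geq \tfrac{\eta_1}{2}th^\delta$.
\item The Hardy error term is $h^2/(4\bar\rho)=\mathcal O(h^2)$, which is negligible against $h^\delta$.
\end{itemize}
The conclusion is
\[
\int_0^\infty |e^{\beta/h}\Phi_{k,h}|^2\,dx\leq C h^{-\delta},\qquad \int_{\bar\rho^{1/2}}^\infty |e^{\beta/h}h\Phi'_{k,h}|^2\,dx\leq C h^{-\delta}.
\]
The exact polynomial losses do not matter: they are negligible against the exponential.

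\emph{Trial space.} Take a cutoff $\chi$ equal to $1$ on $(-\infty,1-\varepsilon]$ and to $0$ on $[1-\varepsilon/2,\infty)$, with $|\chi'|\lesssim \varepsilon^{-1}$ and $|\chi''|\lesssim\varepsilon^{-2}$. Use the space $M=\mathrm{span}\{\chi\Phi_{j,h}: j\leq k\}$. For $j\leq k$ we have $e_j\leq e_k$, so the Agmon estimate holds uniformly in $j$. Expanding the quadratic form, the commutator terms and the defect $\|(1-\chi)\Phi_{j,h}\|$ are bounded by
\[
C h^{-N}\varepsilon^{-2}\, e^{-2\beta(1-\varepsilon)/h}.
\]
This keeps the Gram matrix close to the identity, so $\dim M=k$. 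The min-max principle then gives
\[
\lambda_k(h)\leq e_k(h)+C h^{-N}e^{-2\beta(1-\varepsilon)/h}.
\]

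\emph{Computing the exponent.} This is the main step: I need
\[
2\beta(1-\varepsilon)\geq \tfrac23(1-\eta')\,t^{3/2}h^{3\delta/2}.
\]
Near $s=1$ write $s^2-\bar\rho\approx 2(s-s_0)$, where $s_0=\bar\rho^{1/2}$ satisfies $1-s_0\approx\tfrac12(1-\eta_1)th^\delta$. Then
\[
\beta(1-\varepsilon)\approx \int_{s_0}^{1-\varepsilon}\sqrt{2(s-s_0)}\,ds=\tfrac{2\sqrt2}{3}(1-\varepsilon-s_0)^{3/2}.
\]
With $1-\varepsilon-s_0\approx \bigl(\tfrac{1-\eta_1}{2}-\eta_2\bigr)th^\delta$ this becomes
\[
2\beta(1-\varepsilon)\approx \tfrac{4\sqrt2}{3}\Bigl(\tfrac{1-\eta_1}{2}-\eta_2\Bigr)^{3/2}t^{3/2}h^{3\delta/2}\;\longrightarrow\; \tfrac{2}{3}\,t^{3/2}h^{3\delta/2}\quad(\eta_1,\eta_2\to0),
\]
which is exactly the exponent in the statement. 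The relative error in replacing $s^2-\bar\rho$ by its linearization is $\mathcal O(h^\delta)$.

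\emph{Absorbing the prefactor.} It remains to absorb $Ch^{-N}\varepsilon^{-2}$. Since $\delta<2/3$, the exponent $h^{3\delta/2-1}$ tends to $+\infty$ like a power of $h^{-1}$. Hence $\log(h^{-N})=o(h^{3\delta/2-1})$, and the prefactor is absorbed by a slight loss $\eta$ in the exponent. This is where $\delta<2/3$ is used. Finally, choosing $\eta_1,\eta_2$ small in terms of $\eta$ and $h_0$ small gives the claim, uniformly in $k$, since the only $k$-dependence enters through $e_k\leq 1-th^\delta$.
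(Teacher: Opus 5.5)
Your proposal is correct and follows essentially the same route as the paper. The paper also reruns the Agmon estimate of Proposition~\ref{prop:dc-Phi} with $\bar\rho=1-(1-\varepsilon)t\,h^{\delta}$ and cuts off within distance $\varepsilon t h^{\delta}$ of $x=1$; it applies min-max on $\mathrm{span}\{\chi\Phi_{j,h}\colon j\le k\}$ and uses $\beta(1-t_2h^\delta)=\tfrac13(t_1-2t_2)^{3/2}h^{3\delta/2}+\mathcal O(h^{5\delta/2})$, which is your linearized exponent computation, absorbing the polynomial prefactors (including the factor $k\lesssim h^{-1}$ from the $k$ trial functions) because $h^{3\delta/2-1}\to+\infty$ when $\delta<2/3$.
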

\begin{proof}
    The argument used in the proof of Proposition~\ref{prop:dc-Phi} applies with
    $\rho^*=1-th^\delta$ and $\bar\rho=1-t_1h^\delta$, where $0<t_1<t$. It yields that  $\Phi_{k,h}$ satisfies
    \[ \int_{0}^{+\infty}|e^{\beta/h}\Phi_{k,h}|^2dx\leq Ch^{-\delta}\,,\quad \int_{\bar\rho^{1/2}}^{+\infty}|e^{\beta/h}\Phi'_{k,h}|^2dx\leq C\, h^{-2-\delta}\,,\]
    where
    \[\beta(x)=\int_0^{x}\sqrt{(s^2-\bar\rho)}\,ds=\int_0^x\sqrt{(s^2-1+t_1h^\delta)_+}\,ds.\]
    Note that, for $0\leq t_2<\frac12 t_1$, we have
    \[\beta(1-t_2h^\delta)=\frac13(t_1-2t_2)^{3/2}h^{3\delta/2}+\mathcal O(h^{5\delta/2}).\]
    We choose $\varepsilon\in(0,\frac12)$ sufficiently small but fixed,  $t_1=(1-\varepsilon)t$ and $t_2=\varepsilon t$. Let $\chi$ be a cut-off function satisfying
    \[\chi=1\text{ on $[0,1-t_2 h^\delta]$,}\quad \chi=0\text{ on $[1,+\infty)$.}\]
    With $M=\mathrm{Span}\{\chi\Phi_{j,h}\colon 1\leq j\leq k\},$ and $\alpha_\varepsilon=2\beta(1-\varepsilon th^\delta)$, we obtain
    \[\sup_{u\in M\setminus\{0\}}\frac{\langle T_h\phi,\phi\rangle}{\|\phi\|^2}\leq e_k(h)+h^{-2\delta}e^{-\alpha_\varepsilon/h}\,.\]
     Note that $\alpha_\varepsilon$ depends on $h$. For a given $\eta\in(0,1)$, we can choose $\varepsilon,h_0$ sufficently small such that
    \[\left|\alpha_\varepsilon+2\delta h\ln h-\frac{t^{3/2}}{3}h^{3\delta/2}\right|<\eta \frac{t^{3/2}}{2}h^{3\delta/2}\quad\text{for }h\in(0,h_0].\]
\end{proof}

It follows from Proposition~\ref{prop:2.5} that for 
a label $k=k(h)$ satisfying 
\[4kh=1-th^{\delta} +o(h^\delta)\,,\]
with $0<\delta<2/3\,$,
the corresponding eigenvalue $\lambda_k(h)$ satisfies
\begin{equation}
e_k(h)\leq \lambda_k(h)\leq e_k(h)+\exp\left(-\frac{2t^{3/2}h^{\frac{3\delta}{2}-1}}{3}\bigl(1+o(1)\bigr)\right)\,.
\end{equation}
Hence we are still in a regime where the correction is exponentially small.

\begin{remark}\label{rem:2.6}
A close inspection of the proof of Proposition~\ref{prop:2.5} shows that we can obtain the following generalization for $\delta =2/3$ and $t=t(h)\to+\infty$: For any $\eta\in(0,1)$ and $k$ satisfying
$e_k(h)\leq 1-t(h)h^{2/3}$, we have
\[e_k(h)\leq \lambda_k(h)\leq e_k(h) + \exp\left(-\frac{2(1-\eta)}{3}t(h)^{3/2}\right)\,,\]
for $h$ in a right neighborhood of $0\,$.
\end{remark}
\subsection{Plancherel-Rotach asymptotics}\label{subsec:Plancherel-Rotach}

We recall the formulas of Plancherel-Rotach type for Laguerre polynomials \cite[Theorem~8.22.8\,(b)]{Szego}. After a change of variable, this gives accurate asymptotics for the function in \eqref{eq:def-Phi} that are much  stronger than the rough Agmon estimate in Proposition~\ref{prop:dc-Phi}.

Let $\epsilon$ and $\omega$ be fixed positive numbers
 such that $\epsilon < \omega$. For $n$ sufficiently large,
$y = (4n + 2\nu + 2) \cosh^2 \phi$, and $\epsilon \leq \phi \leq \omega$, we have
\begin{equation}\label{eq:Plancherel-Rotach}
e^{-y/2} L_n^{(\nu)}(y) = \frac{1}{2} (-1)^n (\pi \sinh \phi)^{-1/2} y^{-\frac{\nu}{2} - \frac{1}{4}} n^{\frac{\nu}{2} - \frac{1}{4}} \exp\bigl(E_n(y)\bigr)\bigl\{ 1 +\mathcal  O(n^{-1}) \bigr\}.
\end{equation}
where
\[
E_n(y) = \left(n + \frac{\nu+1}{2}\right)(2\phi - \sinh 2\phi)=
-\int_{\sqrt{4n + 2\nu + 2}}^{\sqrt{y}} \sqrt{s^2 - (4n + 2\nu + 2)} \, ds\,,
\]
and $\mathcal O(n^{-1})$ is uniform with respect to $\phi\in [\epsilon,\omega]\,$.\mn

 We show that the Plancherel-Rotach asymptotics continue to hold if we take $\omega=+\infty$.

\begin{proposition}\label{prop:Plancehere-Rotach}
Let $\epsilon>0$ be  fixed. There exist $C,N_0>0$ such that, if $n\geq n_0$, $\phi\geq\epsilon$, and
$y = (4n + 2\nu + 2) \cosh^2 \phi,$ then
\[e^{-y/2} L_n^{(\nu)}(y) = \frac{1}{2} (-1)^n (\pi \sinh \phi)^{-1/2} y^{-\frac{\nu}{2} - \frac{1}{4}} n^{\frac{\nu}{2} - \frac{1}{4}} \exp\bigl(E_n(y)\bigr)\bigl\{ 1 +R_n(y) \bigr\},\]
with
\[|R_n(y)|\leq Cn^{-1}.\]
\end{proposition}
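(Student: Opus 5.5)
Since Szegő's Theorem~8.22.8(b) already gives the result uniformly on any compact range $\phi\in[\epsilon,\omega]$, the task is to cover the tail $\phi\geq\omega$ for a suitably large fixed $\omega$. I would prove the tail directly from the differential equation, via a Liouville--Green (WKB) error bound of Olver type with an explicit error-control function.

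\textbf{Step 1: a Liouville--Green problem.} Set $N=4n+2\nu+2$, $y=z^2$, and $w(z)=z^{\nu+1/2}e^{-z^2/2}L_n^{(\nu)}(z^2)$, which is, up to a constant, $\Phi_{n+1}$ from \eqref{eq:def-Phi*}. This function solves
\[
w''=\Bigl(z^2-N+\frac{\nu^2-1/4}{z^2}\Bigr)w
\]
on $\R_+$ and is recessive as $z\to+\infty$.

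\textbf{Step 2: the recessive LG solution and its error.} Write $f(z)=z^2-N$. On $z\geq z_\epsilon:=\sqrt N\cosh\epsilon$, Olver's theorem (Chapter~6, Theorem~2.1 in \cite{Olver}) gives a recessive solution
\[
w_{\rm rec}(z)=f^{-1/4}e^{-\int_{\sqrt N}^z\sqrt f}\,(1+\eta(z)),\qquad |\eta(z)|\leq e^{\mathcal V_{z,\infty}(F)}-1 .
\]
Here $F$ is the usual error-control function, built from $f^{-1/4}(f^{-1/4})''$ together with $(\nu^2-1/4)z^{-2}f^{-1/2}$. Rescaling $z=\sqrt N s$ shows that the Schwarzian part contributes $\mathcal O(N^{-1})$ to the total variation on $s\geq\cosh\epsilon$: the integrand is $\mathcal O(s^{-4})$ at infinity and bounded near $s=\cosh\epsilon$. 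The singular term contributes
\[
\int_{z_\epsilon}^\infty \frac{dz}{z^2\sqrt{z^2-N}}=\mathcal O(N^{-1})
\]
as well. Hence $\sup_{z\geq z_\epsilon}|\eta|\leq Cn^{-1}$, uniformly up to $z=+\infty$. This uniformity is exactly what removes the restriction $\phi\leq\omega$.

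\textbf{Step 3: identification of the constant.} Since $w$ is recessive and the recessive solution is unique up to a factor, $w=c_n w_{\rm rec}$ for some constant $c_n$. To compute $c_n$, evaluate both sides at a single fixed point, for example $\phi=\epsilon$, i.e.\ $z=z_\epsilon$, and use Szegő's formula \eqref{eq:Plancherel-Rotach} there. Note that $y^{-\nu/2-1/4}$, $(\sinh\phi)^{-1/2}$ and the factor $z^{\nu+1/2}$ combine consistently with $f^{-1/4}=(N^{1/2}\sinh\phi)^{-1/2}$, and that $-\int\sqrt f=E_n(y)$. This gives
\[
c_n=\tfrac12(-1)^n\pi^{-1/2}n^{\nu/2-1/4}N^{1/4}\,(1+\mathcal O(n^{-1})).
\]
Substituting back, the formula holds for all $\phi\geq\epsilon$ with $1+R_n=(1+\mathcal O(n^{-1}))(1+\eta)$, so $|R_n|\leq Cn^{-1}$.

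\textbf{Main obstacle.} The delicate point is the uniform total-variation bound in Step 2, in particular checking that the inverse-square term and the Schwarzian term remain integrable on the whole half-line with an $\mathcal O(N^{-1})$ bound. Behaviour near the turning point does not enter, since we stay at $\phi\geq\epsilon$. The rest is bookkeeping of the powers of $N$ and $n$, using $N\sim 4n$; any discrepancy $N/(4n)=1+\mathcal O(n^{-1})$ is absorbed into $R_n$.
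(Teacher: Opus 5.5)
Your argument is correct, but it takes a different route from the paper's.

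\textbf{The paper's route.} It does not use Szeg\H{o} at all for this step. It starts from the Airy-type uniform expansion of $L_n^{(\nu)}(NX)$ in DLMF (18.15.22), which is already uniform for $X\in[\delta,\infty)$. It then replaces $\Ai(N^{2/3}\zeta)$ and $\Ai'(N^{2/3}\zeta)$ by their large-argument asymptotics, which is legitimate because $\zeta$ stays bounded below by a positive constant when $\phi\geq\epsilon$. It finishes with the bookkeeping $2^{-\nu+1/2}(N/n)^{\nu/2-1/4}=1+\mathcal O(n^{-1})$.

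\textbf{Your route.} You observe that for $\phi\geq\epsilon$ there is no turning point, so plain Liouville--Green error bounds suffice. Uniformity up to $z=+\infty$ then reduces to integrability of the error-control function. Your scaling computation is right: both the Schwarzian part and the inverse-square part give total variation $\mathcal O(N^{-1})$. One small slip: the Schwarzian integrand decays like $\tfrac34 s^{-3}$, not $s^{-4}$. This is still integrable, so nothing changes.

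\textbf{What each buys.} The paper gets the normalization for free from DLMF. In exchange it relies on a heavier turning-point theorem, and it leaves some checks implicit. For instance, one must verify that the $F_0$-term and the $\operatorname{env}\Ai$ remainder stay $\mathcal O(N^{-1})$ relative to the main term as $X\to\infty$, which amounts to $\zeta^{1/2}F_0(\zeta)$ staying bounded. Your approach is more elementary and makes the reason for uniformity transparent. It is also closer in spirit to the WKB comparison in Proposition~\ref{prop:asy-Phi}. Its cost is that it borrows the constant from Szeg\H{o} at $\phi=\epsilon$.

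\textbf{Removing the dependence on Szeg\H{o}.} You can drop even that. Since $\eta(z)\to0$ as $z\to+\infty$, compare $w$ with $f^{-1/4}e^{-\int_{\sqrt N}^z\sqrt f}$ as $z\to+\infty$, using the leading coefficient $(-1)^n/n!$ of $L_n^{(\nu)}$. This gives the exact value $c_n=\frac{(-1)^n}{n!}e^{-N/4}(N/4)^{N/4}$. Stirling's formula then reproduces your value of $c_n$, so the proof becomes self-contained.
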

\begin{proof}
Fix $\nu\in\mathbb{R}$ and set
\[
N=4n+2\nu+2\,,
\qquad
y=N\cosh^2\phi\,,\, X=\cosh^2\phi\,.
\]
For $\phi\geq \epsilon\,$, we have $X>1\,$, and we define $\zeta>0$ as in \cite[Eq. (18.15.21)]{DLMF} by
\[
\zeta^{3/2}
=
\frac34\left(
\sqrt{X^2-X}
-
\operatorname{arccosh}\sqrt X
\right)=
\frac38\left(\sinh(2\phi)-2\phi\right).
\]
Thus,
\begin{equation}\label{eq:Nzeta}
-\frac23N\zeta^{3/2}
=
\frac N4\left(2\phi-\sinh(2\phi)\right)=E_n(y).
\end{equation}
By 
\cite[Eq. (18.15.22) for $M=1$]{DLMF},  we have
\[
\begin{split}
L^{(\nu)}_{n}\left(N X\right)=&(-1)^{n}\frac{{e}^{\frac{1}{2}N X
}}{2^{\nu-\frac{1}{2}}X^{\frac{1}{2}\nu+\frac{1}{4}}}\*\left(\frac{\zeta
}{X-1}\right)^{\frac{1}{4}}\times\\
&\left(\frac{\operatorname{Ai}\left(N^{\frac{2}{3}
}\zeta\right)}{N^{\frac{1}{3}}}
+\frac{\operatorname{Ai}'\left(N^{\frac{2}{3}}\zeta\right)}{N^{\frac{5}{3}
}}F_{0}(\zeta)+\operatorname{envAi}\left(N^
{\frac{2}{3}}\zeta\right)R_n(X)\right)\,,
\end{split}\]
where $\operatorname{envAi}$ is the envelop of the Airy function (see \cite[Eq. (2.8.20)]{DLMF})
\[ \operatorname{env}\Ai(x)
=\begin{cases}
\bigl(\Ai(x)^2+\Bi(x)^2\bigr)^{1/2},&
x\leq c,\\
\sqrt{2}\,\Ai(x)\,,&
x\geq c\,,
\end{cases}
\]
where $c$ is the real root of smallest absolute value of
$\Ai(c)=\Bi(c)$, namely
$
c\sim -0.36\ldots ,
$
$F_0$ is defined in \cite[Eq. (18.15.23)]{DLMF} as
\[F_0(\zeta)=-\frac{5}{48\zeta^{2}}+\left(\frac{X-1}{X\zeta}\right)^{\frac{1}{%
2}}\left(\frac{1}{2}\nu^{2}-\frac{1}{8}-\frac{1}{4}\frac{X}{X-1}+\frac{5}{2%
4}\left(\frac{X}{X-1}\right)^{2}\right)\,,\]
and
\[R_n(X)=\mathcal O\left(\frac{1}{N^{\frac{4}{3}}}\right)\,\]
uniformly for $\delta\leq X<+\infty\,$, for any $\delta>0\,$, hence for
$\epsilon\leq\phi<+\infty$\,.

Thus, we have an expansion of $L_n^{(\nu)}(y)\,$, since $y=N\,X\,$. 
Using the Airy asymptotic formulas as $z\to+\infty$ (see \cite[Eq. (9.7.5) and (9.7.6)]{DLMF})
\[\operatorname{Ai}\left(z\right)\sim\frac{e^{-\frac23z^{3/2}}}{2\sqrt{\pi}z^{1/4}},\quad \operatorname{Ai}'\left(z\right)\sim-\frac{z^{1/4}e^{-\frac23z^{3/2}}}{2\sqrt{\pi}}\,,\]
we obtain by \eqref{eq:Nzeta},
\[
e^{-y/2}L_n^{(\nu)}(y)
=
\frac{(-1)^n}{2\sqrt{\pi}}\,
2^{-\nu+\frac12}N^{-\frac12}
(\cosh\phi)^{-\nu-\frac12}
(\sinh\phi)^{-\frac12}
\exp\left(E_n(y)\right)
\left(1+\mathcal O(N^{-1})\right)\,,
\]
uniformly for \(\epsilon\leq\phi<+\infty.\)

It remains to observe that
\[
y^{-\nu/2-1/4}n^{\nu/2-1/4}
=
N^{-\nu/2-1/4}n^{\nu/2-1/4}
(\cosh\phi)^{-\nu-1/2}\,,
\]
and
\[
2^{-\nu+1/2}
\left(\frac{N}{n}\right)^{\nu/2-1/4}
=
1+\mathcal O(n^{-1})\,.
\]
Therefore
\[
\begin{aligned}
e^{-y/2}L_n^{(\nu)}(y)
=
\frac{(-1)^n}{2\sqrt{\pi\sinh\phi}}\,
y^{-\nu/2-1/4}
n^{\nu/2-1/4}
\exp\left(E_n(y)\right)
\left(1+\mathcal O(n^{-1})\right)\,,
\end{aligned}
\]
uniformly for
\(\epsilon\leq\phi<+\infty\,.
\)
\end{proof}

Let $1<A<+\infty$. In terms of the function $\Phi_k$ in \eqref{eq:def-Phi*},  the asymptotics in \eqref{eq:Plancherel-Rotach} reads as
\[\Phi_k(z) = C_k \frac{(-1)^{k-1} k^{\frac{\nu}{2}-\frac{1}{4}}}{2\sqrt{\pi}} \left(\frac{z^2}{e_k(1)}-1\right)^{-1/4} \exp(E_{k-1}(z^2)) \left\{1 + \mathcal{O}(k^{-1})\right\}\,,
\] 
 for $z\geq A\sqrt{e_k(1)}\,$.\\
Furthermore, by Proposition~\ref{prop:Plancehere-Rotach}, $\mathcal O(k^{-1})$ is uniform with respect to $z\geq A\sqrt{e_k(1)}.$

Doing the change of variable $z=x/\sqrt{h}$ and noting that
\[e_k(h)=he_k(1),\quad  E_{k-1}(z^2)=-S(x;e_k(h))/h\,,\]
we obtain, for $x\geq A\sqrt{e_k(h)}\,,$
\[\begin{split}
		\Phi_{k,h}(x)
		=
		\hat C_{k,h}
		\left(\frac{x^2}{e_k(h)}-1\right)^{-1/4}
		\exp\bigl(-S(x;e_k(h))/h\bigr)
		\bigl\{1+\mathcal O(k^{-1})\bigr\}\,,
	\end{split}
	\]
where $\Phi_{k,h}$ is introduced in \eqref{eq:def-Phi} and
\[\hat C_{k,h}=C_k\frac{(-1)^{k-1}k^{\frac{\nu}{2}-\frac{1}{4}}h^{-1/4}}{2\sqrt{\pi}}\,.\]

Later, we need an accurate asymptotics of $\Phi_{k,h}$ in a neighborhood of $x=1$. We manage to do this by imposing the condition
\begin{equation}\label{eq:cond-k,h}
\ell(h)\leq e_k(h)\leq \rho^*<1\,,
\end{equation}
where $\rho^*>0$ is given, $\ell(h)\to0$ and $h^{-1}\ell(h)\to+\infty$ .  This in particular implies that $k\to+\infty$ as $h\to0^+$. Fixing  $a\in(\sqrt{\rho^*},1)$, we can select $A>1$ in such a way that
\[A\sqrt{e_k(h)}<a\,. \]
Consequently, we obtain the following asymptotic formula
    \begin{equation}\label{eq:asy-Phi}
\Phi_{k,h}(x)
    		=
    		\hat C_{k,h}
    		\left(\frac{x^2}{e_k(h)}-1\right)^{-1/4}
    		e^{-S_{k,h}(x)/h}
    		\left(1+\mathcal O(k^{-1})\right)\,,
    \end{equation}
uniformly with respect to $x\in[a,+\infty)\,$. \\
Here
    \begin{equation}\label{eq:def-S-kh}
    S_{k,h}(x)=S(x;e_k(h)),
    \end{equation} and $S(x;E)$ is introduced in \eqref{eq:def-Agmon}. Furthermore, thanks to \eqref{eq:C-kh}, we have
\begin{equation}\label{eq:def-hat-C-kh}
\hat C_{k,h}= \frac{(-1)^{k-1}}{\sqrt{2\pi} (kh)^{1/4}} (1+\mathcal O(k^{-1}))
\sim (-1)^{k-1}\pi^{-\frac12}e_k(h)^{-\frac{1}{4}}\,.
\end{equation}
We prove next that we can differentiate \eqref{eq:asy-Phi}.
\begin{proposition}\label{prop:asy-Phi}
Let $a\in(\sqrt{\rho^*},1)$ be fixed. There exists $C,h_0>0$ such that,   if $(k,h)$ satisfyies \eqref{eq:cond-k,h},  $h<h_0$, and $x\in[a,1]$, then 
\[\Phi'_{k,h}(x)=h^{-1}S'_{k,h}(x)\Phi_{k,h}(x)\bigl(1+R_{k,h}(x)\bigr),\]
with
\[|R_{k,h}(x)|\leq C k^{-1}\,.\]
\end{proposition}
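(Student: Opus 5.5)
The plan is to avoid differentiating the asymptotic formula \eqref{eq:asy-Phi} term by term. Instead I would recover $\Phi'_{k,h}$ from $\Phi_{k,h}$ itself by integrating the eigenvalue equation from $+\infty$. Since $T_h\Phi_{k,h}=e_k(h)\Phi_{k,h}$ on $\R_+$, we have
\[
h^2\Phi''_{k,h}=\Bigl(x^2-e_k(h)+h^2\frac{\nu^2-1/4}{x^2}\Bigr)\Phi_{k,h}=:W_{k,h}(x)\,\Phi_{k,h}\,.
\]
The function $\Phi_{k,h}$ is a polynomial times a Gaussian, so $\Phi'_{k,h}(s)\to0$ as $s\to+\infty$. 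For $x\geq a$ this gives
\[
\Phi'_{k,h}(x)=-h^{-2}\int_x^{+\infty}W_{k,h}(s)\,\Phi_{k,h}(s)\,ds\,.
\]

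Next I insert \eqref{eq:asy-Phi}, which holds uniformly on $[a,+\infty)$, into this integral. Write $e=e_k(h)$, $S=S_{k,h}$, $S'(s)=\sqrt{s^2-e}$, and
\[
g(s)=W_{k,h}(s)\Bigl(\frac{s^2}{e}-1\Bigr)^{-1/4}.
\]
The integral then becomes
\[
\hat C_{k,h}\int_x^\infty g(s)\,e^{-S(s)/h}\bigl(1+\mathcal O(k^{-1})\bigr)\,ds\,.
\]
The key uniformity input is $s^2-e\geq a^2-\rho^*>0$ for $s\geq a$. Hence $S'\geq c_0>0$, and $g>0$ with $g'/g$ and $(g/S')'/(g/S')$ bounded by a constant $C$ on $[a,\infty)$, uniformly in $(k,h)$ satisfying \eqref{eq:cond-k,h}.

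The remainder is handled crudely. Its contribution is at most $Ck^{-1}\int_x^\infty g\,e^{-S/h}\,ds$, which is $\mathcal O(k^{-1})$ relative to the main integral.

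For the main integral I integrate by parts, writing $e^{-S/h}=-\frac{h}{S'}\bigl(e^{-S/h}\bigr)'$:
\[
\int_x^\infty g\,e^{-S/h}\,ds=\frac{h\,g(x)}{S'(x)}e^{-S(x)/h}+h\int_x^\infty\Bigl(\frac{g}{S'}\Bigr)'e^{-S/h}\,ds\,.
\]
The last integral is bounded by $Ch\int_x^\infty g\,e^{-S/h}/S'\,ds$, so it can be absorbed and the integral equals $\frac{hg(x)}{S'(x)}e^{-S(x)/h}(1+\mathcal O(h))$.

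Now I compare with \eqref{eq:asy-Phi} at the point $x$. This gives $W_{k,h}(x)/S'(x)=S'(x)\bigl(1+\mathcal O(h^2)\bigr)$, and therefore
\[
\Phi'_{k,h}(x)=-h^{-1}S'_{k,h}(x)\,\Phi_{k,h}(x)\bigl(1+\mathcal O(k^{-1})+\mathcal O(h)\bigr).
\]
The sign is the one dictated by the decaying factor $e^{-S/h}$. It matches the statement once $S'_{k,h}$ is read with the orientation of the exponent; I would check this convention carefully. Finally, $h\leq 1/(4k)$ because $4kh\leq e_k(h)\leq\rho^*<1$, so $\mathcal O(h)\subset\mathcal O(k^{-1})$, which yields $|R_{k,h}|\leq Ck^{-1}$.

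The main obstacle is making the Laplace-type estimate uniform on the unbounded interval $[a,\infty)$ and in $k$. This requires controlling the polynomial growth of $g$ at infinity against $e^{-S/h}$, and checking that the remainder in \eqref{eq:asy-Phi} is uniform up to infinity. The latter is exactly what Proposition~\ref{prop:Plancehere-Rotach} supplies, through the uniformity in $\phi\in[\epsilon,+\infty)$. A further point is that $\Phi_{k,h}$ does not vanish on $[a,\infty)$, which follows from \eqref{eq:asy-Phi} for $h$ small, so dividing by $\Phi_{k,h}$ is legitimate.
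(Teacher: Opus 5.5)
Your argument is correct and takes a different route from the paper.

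\textbf{How the two routes differ.} The paper introduces the WKB approximant $\Phi_*=\hat C_{k,h}(x^2/E-1)^{-1/4}e^{-S/h}$ and checks that it solves the equation up to a residual $R_h\Phi_*$ with $R_h=\mathcal O(h^2)$ on a compact interval $[a,b]$. It then integrates the Wronskian $\Phi'\Phi_*-\Phi\Phi_*'$, whose derivative is $h^{-2}R_h\Phi\Phi_*$. A Laplace estimate shows that $\Phi'/\Phi$ and $\Phi_*'/\Phi_*$ agree up to a relative $\mathcal O(h^2)$.

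You instead integrate $h^2\Phi''=W_{k,h}\Phi$ from $+\infty$ and evaluate the integral with one integration by parts. For this you only need two things. The first is $W_{k,h}>0$, which lets you pull out the $\mathcal O(k^{-1})$ remainder of \eqref{eq:asy-Phi} with no cancellation. The second is uniform bounds on the logarithmic derivatives of $W_{k,h}$, $(s^2/e-1)^{-1/4}$ and $S'$ on $[a,\infty)$; these follow from $s^2-e_k(h)\geq a^2-\rho^*$.

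\textbf{What each buys.} Your route needs no residual computation for the WKB ansatz. Its boundary term vanishes automatically, since $\Phi'_{k,h}(s)\to0$. A Wronskian argument on $[x,b]$ must also control the boundary value at $b$, which involves $\Phi'(b)$; the paper's write-up leaves this implicit.

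Your route has two costs. First, it needs \eqref{eq:asy-Phi} uniformly on the whole half-line, which Proposition~\ref{prop:Plancehere-Rotach} supplies. Second, the $\mathcal O(k^{-1})$ errors at $x$ and inside the integral do not cancel, so you get $R_{k,h}=\mathcal O(k^{-1})+\mathcal O(h)$. The Wronskian comparison uses $\Phi\approx\Phi_*$ only inside a ratio of integrals and can give $\mathcal O(h)$. For the statement as posed, your bound suffices.

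\textbf{Two small fixes.}
\begin{enumerate}
\item You are right about the sign: the correct formula is $\Phi'_{k,h}=-h^{-1}S'_{k,h}\Phi_{k,h}(1+R_{k,h})$. No orientation convention is involved. The plus sign in the statement is a typo, and the paper's own proof uses $\Phi_*'\sim-h^{-1}S'\Phi_*$. With the printed sign, $R_{k,h}$ would be close to $-2$.
\item The inequality $4kh\leq e_k(h)$ fails when $\nu<1$. Use $(4k-2)h\leq e_k(h)\leq\rho^*$ instead. This gives $h\leq\rho^*/(2k)$ for $k\geq1$, so $\mathcal O(h)\subset\mathcal O(k^{-1})$ still holds.
\end{enumerate}
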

\begin{proof}
For simplicity, we will omit the reference to $(k,h)$ and write $S(x)$ instead of $S_{k,h}(x)$, and $\Phi(x)$ instead of $\Phi_{k,h}(x)$. Let $E=e_k(h)$ and $\Phi_*(x)=\hat C_{k,h}
	\left(\frac{x^2}{E}-1\right)^{-1/4}
	e^{-S(x)/h}.$ Fixing $b>1$, we know that $\Phi(x)= \Phi_*(x)\big(1+\mathcal O(k^{-1})\big)$ for $x\in[a,b]\,$.

Note that $\Phi_*'(x)\sim -h^{-1}S'(x)\Phi_*(x)$ and that $S'(x)=\sqrt{x^2-E}$ is bounded in $[a,b]$. Moreover, $\Phi_*(b)/\Phi_*(x)\to0$ uniformly for $x\in[a,1]$\,.

The function $\Phi_*$ satisfies
\[-h^2\Phi_*''+h^2\frac{\nu^2-1/4}{x^2}\Phi_*+(x^2-E)\Phi_*=R_h \,\Phi_*\,,\]
with $R_h=\mathcal O(h^2)\,,$ on $[a,b].$ Indeed, writing
	\[
	q(x)=\left(\frac{x^2}{E}-1\right)^{-1/4}\,,
	\]
	we have
	\[
	2S'(x)q'(x)+S''(x)q(x)=0\,,
	\]
	and
	\[
	R_h
	=
	h^2\left(
	-\frac{q''(x)}{q(x)}
	+\frac{\nu^2-1/4}{x^2}
	\right)
	=\mathcal O(h^2)\,,
	\]
	uniformly on $[a,b]$\,.

Thus
\[(\Phi'\Phi_*-\Phi\Phi_*)'=-h^{-2}R_h\Phi\Phi_*\,.\]
Consequently
\[\Phi'(x)\Phi_*(x)-\Phi(x)\Phi_*'(x)=h^{-2}\int_{x}^{1}R_h(t)\Phi(t)\Phi_*(t)\,dt\,.\]
Dividing both sides by $\Phi_*'(x)\Phi(x)$ and using that $\Phi_*(t)\sim -h \Phi_*'(t)/S'(t)\,$, we infer from the aforementioned formula that
\[\frac{\Phi'(x)}{\Phi_*'(x)}-1=\Big(\frac{1}{\Phi(x)\Phi_*(x)}\int_{x}^{b} \Phi(t)\Phi_*'(t)dt \Big)\, \mathcal O(h^2)\,. \]
Using that $\Phi(t)\sim\Phi_*(t)\,,$ we obtain
\[\frac{1}{\Phi(x)\Phi_*(x)}\int_{x}^{b} \Phi(t)\Phi_*'(t)dt\sim\frac{1}{\Phi_*(x)^2}\int_{x}^{b} \Phi_*(t)\Phi_*'(t)dt=\frac12-\frac{\Phi_*(b)^2}{\Phi_*(x)^2}=\frac12+o(1)\,.\]
Thus, $\Phi'(x)=\Phi_*'(x)(1+\mathcal O(h^2))\,,$ uniformly for $x\in[a,1]\,$.
\end{proof}

\subsection{The trial state}\label{subsec:trial-state}

We will adjust the function $\Phi_{k,h}$ by adding to it another function, defined by a growing solution of \eqref{eq:def-Phi*},  resulting in a trial state $v_{k,h}$ satisfying the Dirichlet boundary condition at $x=1$ (see \eqref{eq:quasimode-def}). This line of thought goes back to \cite{Bo,BoHe}.

Choose $a\in(\sqrt{\rho^*},1)\,$. Thanks to \eqref{eq:asy-Phi}, 
$\Phi_{k,h}$ has a constant sign on $[a,1]$, for sufficiently small $h$. 
We define a second solution of
\[
(T_h-\mu)u=0
\]
on $[a,1]$ by reduction of order:
\begin{equation}\label{eq:def-Psi}
	\Psi_{k,h}(x)
	=
	\Phi_{k,h}(x)\int_a^x\frac{dt}{\Phi_{k,h}(t)^2}\,.
\end{equation}
Then
\begin{equation}\label{eq:Wronskian-normalization}
	W(\Phi_{k,h},\Psi_{k,h})
	:=
	\Phi_{k,h}\,\Psi_{k,h}'-\Phi_{k,h}'\,\Psi_{k,h}
	=
	1\,.
\end{equation}
Choose (fixed) numbers $\epsilon,a_0$ and $a_1$ such that
\[
0<\epsilon<a<a_0<a_1<1
\]
and
\begin{equation}\label{eq:cond-a-i}
2S_{k,h}(a_1)-S_{k,h}(a_0)-S_{k,h}(1)<-\epsilon\,.
\end{equation}
Then, choose a function $\chi\in C^\infty([0,1])$ such that
\[
\chi=0 \text{ on }[0,a_0]\,,
\qquad
\chi=1 \text{ on }[a_1,1]\,.
\]
Set
\begin{equation}\label{eq:J-def}
	J_{k,h}
	:=
	\int_a^1\frac{1}{\Phi_{k,h}(t)^2}\,dt\,,
\end{equation}
and
\begin{equation}\label{eq:quasimode-def}
	c_{k,h}:=-\frac{1}{J_{k,h}}\,,
	\qquad
	v_{k,h}:=\Phi_{k,h}+c_h\chi\Psi_{k,h}\,.
\end{equation}
Since
\[
\Psi_{k,h}(1)=\Phi_{k,h}(1)J_{k,h}\,,
\]
we have
\[
v_{k,h}(1)=\Phi_{k,h}(1)+c_{k,h}\Psi_{k,h}(1)=0\,.
\]
Moreover, $v_{k,h}$ has the regular behavior at the singular endpoint
$x=0$, since $\chi\Psi_{k,h}$ vanishes in a neighborhood of $0$\,.
Consequently, $v_{k,h}$ belongs to the domain of  $T_h\,$.

\mn 

Recall that we fixed $\rho^*\in(0,1)$ and we introduced the set $\mathcal J(h)$ in \eqref{eq:cond-mu}.
\begin{proposition}\label{prop:boundary-quasimode}
	As $h\to0^+$, and for $k\in\mathcal J(h)$, we have
	\begin{equation}\label{eq:asy-ch*}
c_{k,h}=-\frac{2}{\pi h}e^{-2S_{k,h}(1)/h}\bigl(1+o(1)\bigr)\,,
\end{equation}
and
\begin{equation}\label{eq:eigenvalue-shift-J}
		\lambda_k(h)-e_k(h)
		=-h^2c_{k,h}\bigl(1+o(1)\bigr)\,.
	\end{equation}
\end{proposition}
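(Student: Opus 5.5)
The proof has two parts: an asymptotic evaluation of the integral $J_{k,h}$, which gives \eqref{eq:asy-ch*}, and a quasimode argument with the trial state $v_{k,h}$, which gives \eqref{eq:eigenvalue-shift-J}.

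\emph{Step 1: the asymptotics \eqref{eq:asy-ch*}.} By \eqref{eq:asy-Phi}, uniformly on $[a,1]$,
\[\Phi_{k,h}(t)^{-2}=\hat C_{k,h}^{-2}\Bigl(\frac{t^2}{E}-1\Bigr)^{1/2}e^{2S_{k,h}(t)/h}\bigl(1+\mathcal O(k^{-1})\bigr),\qquad E=e_k(h).\]
The phase $S_{k,h}$ is increasing with $S_{k,h}'(1)=\sqrt{1-E}\ge\sqrt{1-\rho^*}>0$. A Laplace-type boundary evaluation at $t=1$ therefore gives
\[J_{k,h}=\hat C_{k,h}^{-2}\Bigl(\frac{1-E}{E}\Bigr)^{1/2}\frac{h}{2\sqrt{1-E}}\,e^{2S_{k,h}(1)/h}\bigl(1+o(1)\bigr).\]
The contribution from $[a,1-\eta]$ is exponentially smaller, uniformly in $k\in\mathcal J(h)$. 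This simplifies to $\frac{h}{2}\hat C_{k,h}^{-2}E^{-1/2}e^{2S_{k,h}(1)/h}$. Using \eqref{eq:def-hat-C-kh}, namely $\hat C_{k,h}^2\sim\pi^{-1}E^{-1/2}$, we get $J_{k,h}\sim\frac{\pi h}{2}e^{2S_{k,h}(1)/h}$, which is \eqref{eq:asy-ch*}. Uniformity in $k$ follows because $E\in[\ell(h),\rho^*]$ and $k\to\infty$ uniformly, so $\mathcal O(k^{-1})=o(1)$.

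\emph{Step 2: the quasimode.} Since $\Psi_{k,h}$ solves the same equation on $[a,1]$, we have
\[r:=(T_h-e_k(h))v_{k,h}=c_{k,h}\bigl(-h^2\chi''\Psi_{k,h}-2h^2\chi'\Psi_{k,h}'\bigr),\]
which is supported in $[a_0,a_1]$. On that interval
\[\Psi_{k,h}(x)=\Phi_{k,h}(x)\int_a^x\Phi_{k,h}^{-2}\approx \frac{h\,\hat C_{k,h}^{-1}}{2}\,(\cdots)\,e^{S_{k,h}(x)/h}.\]
Hence $|c_{k,h}\Psi_{k,h}|\lesssim e^{(S_{k,h}(a_1)-2S_{k,h}(1))/h}$, and similarly for the derivative, with an extra factor $h^{-1}$ coming from Proposition~\ref{prop:asy-Phi}. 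Also $\|v_{k,h}\|=1+o(1)$, because $\|c_{k,h}\chi\Psi_{k,h}\|$ is dominated by its value near $1$, which is $\mathcal O(|c_{k,h}|^{1/2}h^{1/2})$-small.

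\emph{Step 3: the Green-type formula.} We do not use the quasimode bound directly, since it is too crude for an asymptotic. Instead, let $u_k$ be the normalized Dirichlet eigenfunction and integrate by parts on $(0,1)$ between $u_k$ and $\Phi_{k,h}$. This gives
\[(\lambda_k-e_k)\langle u_k,\Phi_{k,h}\rangle=-h^2u_k'(1)\Phi_{k,h}(1).\]
To identify $u_k'(1)$, project $v_{k,h}$: by Corollary~\ref{corol:gap} and the spectral theorem,
\[v_{k,h}=\langle v_{k,h},u_k\rangle u_k+\mathcal O\bigl(h^{-1}\|r\|\bigr).\]
The error $\|r\|/h$ is exponentially smaller than $|c_{k,h}|$ relative to the relevant scale, thanks to the choice \eqref{eq:cond-a-i}. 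An equivalent and cleaner route is to compute directly
\[\langle (T_h-e_k)v_{k,h},u_k\rangle=(\lambda_k-e_k)\langle v_{k,h},u_k\rangle\]
and integrate by parts so that the left side becomes a boundary/commutator term. The commutator with $\chi\Psi_{k,h}$, paired against $\Phi_{k,h}$, telescopes via the Wronskian \eqref{eq:Wronskian-normalization} to exactly $-h^2c_{k,h}$. The remaining error is controlled by $\|r\|$ times the size of $u_k-\langle u_k,v_{k,h}\rangle v_{k,h}$ on $[a_0,a_1]$, and \eqref{eq:cond-a-i} makes this $o(h^2|c_{k,h}|)$. Since $\langle v_{k,h},u_k\rangle\to1$, this yields \eqref{eq:eigenvalue-shift-J}.

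The main obstacle is Step 3: getting the precise constant rather than just an upper bound. This requires controlling $u_k$ on $[a_0,a_1]$ exponentially well, which I would do through the decomposition above combined with the spectral gap $2h$. This is exactly where condition \eqref{eq:cond-a-i} is needed, and the resulting error is exponentially small compared with $h^2|c_{k,h}|$, uniformly over $\mathcal J(h)$.
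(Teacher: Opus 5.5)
Your proposal is correct and is essentially the paper's proof. Step 1 is the same endpoint Laplace evaluation of $J_{k,h}$, and the ``cleaner route'' of your Step 3 is the paper's spectral-projection argument written dually: the paper computes $\langle L_hv,v\rangle=-h^2c_{k,h}+c_{k,h}^2\langle[L_h,\chi]\Psi,\chi\Psi\rangle$ via the Wronskian and bounds $\|v-\Pi_kv\|\le\|L_hv\|/(2h)$ using Corollary~\ref{corol:gap}, which is exactly what controls your $u_k-\langle u_k,v\rangle v$. Keep the $c_{k,h}^2$ term that you dropped when pairing $r$ with $v$; it is negligible by your bounds on $c_{k,h}\Psi_{k,h}$. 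Also, your sharper estimate $|c_{k,h}\Psi_{k,h}|\lesssim e^{(S_{k,h}(a_1)-2S_{k,h}(1))/h}$ on $[a_0,a_1]$ makes \eqref{eq:cond-a-i} unnecessary, since $S_{k,h}(a_1)<S_{k,h}(1)$ uniformly already suffices.
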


With Proposition~\ref{prop:boundary-quasimode} in hand, we have:

\begin{proof}[Proof of Theorem~\ref{thm:main}]
Combine \eqref{eq:asy-ch*} and \eqref{eq:eigenvalue-shift-J}, and use that $\mathsf S_{k,h}=S_{k,h}(1)$\,.
\end{proof}

\begin{proof}[Proof of Proposition~\ref{prop:boundary-quasimode}]
	We divide the proof into several steps. For simplicity, we write $E=e_k(h)$, $S(x)$ instead of $S_{k,h}(x)=S(x;E)$, and $\Phi(x),\Psi(x),v(x)$ instead of $\Phi_{k,h}(x)\,,\Psi_{k,h}(x),v_{k,h}(x)\,$. Also, $\|\cdot\|$ and $\langle\cdot,\cdot\rangle$ denote the standard norm and inner product in $L^2(0,1).$
	
	\medskip
	\noindent
	\emph{Step 1.} We derive \eqref{eq:asy-ch*}. 
	Using \eqref{eq:asy-Phi}, we have
	\[
	\frac{1}{\Phi(t)^2}
	=
	\hat C_{k,h}^{-2}
	\left(\frac{t^2}{E}-1\right)^{1/2}
	e^{2S(t)/h}
	\bigl(1+o(1)\bigr)\,.
	\]
	Hence, by the Laplace method at the endpoint $t=1$,
	\[
	J_{k,h}
	=
	\frac{\hat C_{k,h}^{-2}h}{2S'(1)}
	\left(\frac{1}{E}-1\right)^{1/2}
	e^{2S(1)/h}
	\bigl(1+o(1)\bigr)\,.
	\]
	Since
	\[
	S'(1)=\sqrt{1-E},
	\qquad
	\left(\frac{1}{E}-1\right)^{1/2}
	=
	\frac{\sqrt{1-E}}{\sqrt E}\,,
	\]
	we obtain
	\[
	J_{k,h}
	=
	\frac{\hat C_{k,h}^{-2}h}{2\sqrt E}
	e^{2S(1)/h}
	\bigl(1+o(1)\bigr)\,.
	\]
	Finally, using \eqref{eq:def-hat-C-kh},
	\[
	\hat C_{k,h}^{-2}
	=
	\pi E^{1/2}\bigl(1+o(1)\bigr)\,,
	\]
	and therefore
	\[
	J_{k,h}
	=
	\frac{\pi h}{2}
	e^{2S(1)/h}
	\bigl(1+o(1)\bigr)\,.
	\]
In light of \eqref{eq:def-hat-C-kh}, the coefficient $c_{k,h}=-J_{k,h}^{-1}$ satisfies
\begin{equation}\label{eq:asy-ch}
c_{k,h}=
	-\frac{2}{\pi h}
	e^{-2S(1)/h}
	\bigl(1+o(1)\bigr)
\mbox{ as } h\to 0^+\,.
\end{equation}
Since $S(1)=S_{k,h}(1)\,,$ \eqref{eq:asy-ch*} follows from \eqref{eq:asy-ch}. 

This allows us to show that $v$ is almost normalized in $L^2(0,1)$. Since $\Phi$ is normalized in $L^2(\R_+)$, it follows from \eqref{eq:asy-ch} and from Proposition~\ref{prop:dc-Phi}  that
\begin{equation}\label{eq:norm-vh}
\|v\|=1+o(1)\qquad(h\to0^+)\,.
\end{equation}

\medskip
	\noindent
\emph{Step 2.} We calculate the inner product $\langle L_hv,v\rangle$, where
\[
L_h:=T_h-E\,.
\]
Since \(L_h\Phi=L_h\Psi=0\), we have pointwise
\begin{equation}\label{eq:residual-exact}
	L_hv
	=
	c_{k,h}L_h(\chi\Psi)
	=
	c_{k,h}[L_h,\chi]\Psi\,,
\end{equation}
where
\begin{equation}\label{eq:commutator}
	[L_h,\chi]\Psi
	=
	-h^2\bigl(2\chi'\Psi'+\chi''\Psi\bigr)\,.
\end{equation}
	We first compute \(
	\bigl\langle L_h(\chi\Psi),\Phi\bigr\rangle\).
Since $\chi\Psi$ vanishes in a neighborhood of $0$, whereas
$\chi=1$ in a neighborhood of $1$, two integrations by parts and the
identity $L_h\Phi=0$ yield
\[
\begin{aligned}
	\bigl\langle L_h(\chi\Psi),\Phi\bigr\rangle
	&=
	-h^2
	\left[
	(\chi\Psi)'\Phi-(\chi\Psi)\Phi'
	\right]_{0}^{1}\\
	&=
	-h^2\bigl(\Psi'(1)\Phi(1)-\Psi(1)\Phi'(1)\bigr)\\
	&=
	-h^2W(\Phi,\Psi)=
	-h^2\,.
\end{aligned}
\]
Therefore, we infer from \eqref{eq:residual-exact},
\begin{equation}\label{eq:rayleigh-numerator}
	\langle L_hv,v\rangle
	=
	-h^2c_{k,h}
	+
	c_{k,h}^2
	\bigl\langle [L_h,\chi]\Psi,\chi\Psi\bigr\rangle\,.
\end{equation}
It remains to prove that the second term on the right hand side is negligible compared to the first term. Since
\[
\operatorname{supp}\chi'\cup\operatorname{supp}\chi''
\subset [a_0,a_1],
\]
it follows from \eqref{eq:asy-Phi} and Proposition~\ref{prop:asy-Phi},
\[
	\left|
	\bigl\langle [L_h,\chi]\Psi,\chi\Psi\bigr\rangle
	\right|
	=
	\mathcal O\bigl(h^{-1/2}e^{\frac{3S(a_1)-S(a_0)}{h}}\bigr)\,.
\]
Thanks to \eqref{eq:asy-ch} and \eqref{eq:cond-a-i}, it holds
\begin{equation}\label{eq:quadratic-small}
c_{k,h}^2\langle [L_h,\chi]\Psi,\chi\Psi\bigr\rangle=o(h^2c_{k,h})\quad\mbox{ as } h\to0^+\,.
\end{equation}
In a similar fashion, we prove that
\begin{equation}\label{eq:residual-bound}
	\|L_hv\|=\mathcal O(h^{-1}|c_{k,h}|e^{\frac{2S(a_1)-S(a_0)}{h}})=o\big(h^2\sqrt{|c_{k,h}|}\big)\,.
	\end{equation}
	\medskip
	\noindent
	\emph{Step 3.} It follows from \eqref{eq:rayleigh-numerator}, \eqref{eq:quadratic-small} and  \eqref{eq:norm-vh} that the Rayleigh quotient satisfies
    \begin{equation}\label{eq:Rayleigh}
    \frac{\langle L_hv,v\rangle}{\|v\|^2}=-h^2\,c_{k,h}\, (1+o(1)).
    \end{equation}
An argument via the spectral theorem yields that
\[\lambda_k(h)	=	E-h^2c_{k,h}	+o\bigl(h^2|c_{k,h}|\bigr).\]
We provide the details for the convenience of the reader.  We write the spectral resolution of $T_h$ in $L^2(0,1)$ as
\[T_h=\sum_{j=1}^{+\infty}\lambda_j(h)\Pi_j\,,\quad I=\sum_{j=1}^{+\infty}\Pi_j\,,\]
where $\Pi_j$ is the orthogonal projection on the normalized eigenfunction associated to $\lambda_j(h)\,$. It follows then that
\[\begin{gathered}
\langle L_h(v-\Pi_kv),v-\Pi_kv\rangle=\langle L_hv,v-\Pi_kv\rangle\,,\\
    \|L_hv\|^2=\sum_{j=1}^{+\infty}|\lambda_j(h)-E|^2\|\Pi_jv\|^2\geq 4h^2\sum_{j\not=k}\|\Pi_jv\|^2\,,
\end{gathered}\]
where we used the result of Corollary~\ref{corol:gap} in the last step. Thus, we obtain 
	\begin{equation}\label{eq:projection-error}
		\left|
		\left\langle
		L_h(v-\Pi_{k}v),
		v-\Pi_{k}v
		\right\rangle
		\right|\leq \|L_hv\|\|v-\Pi_kv\|,\quad 		\|v-\Pi_{k}v\|
		\le
		\frac{1}{2h}\|L_hv\|\,.
	\end{equation}
We use next the orthogonal decomposition
	\[
	v=\Pi_{k}v+(v-\Pi_{k}v)
	\]
to write
\[\langle L_hv,v\rangle=(\lambda_k(h)-E)\|\Pi_kv\|^2+\langle L(v-\Pi_kv),v-\Pi_kv\rangle-E\|v-\Pi_kv\|^2.\]

Using \eqref{eq:residual-bound} and \eqref{eq:projection-error}, we get
	\[
	\langle L_hv,v\rangle
	=
	(\lambda_k(h)-E)
	\|\Pi_{k}v\|^2
	+
	o(h^2c_{k,h})\,,\quad \|\Pi_kv\|^2=\|v\|^2+o(1).
	\]
Inserting these estimates into \eqref{eq:Rayleigh}, we obtain the formula in \eqref{eq:eigenvalue-shift-J}.
\end{proof}

\begin{remark}[Explicit dependence on the flux term]
		With $\rho=4kh$ and
		\[
		E=e_k(h)=\rho+2(\nu-1)h\,,
		\]
		we have, by a straightforward calculation,
		\[
		S(1;E)
		=
		S_0(\rho)
		-(\nu-1)h
		\log\!\left(
		\frac{1+\sqrt{1-\rho}}{\sqrt{\rho}}
		\right)
		+\mathcal O(h^2)\,,
		\]
		where
		\[
		S_0(\rho)
		=
		\frac{\sqrt{1-\rho}}{2}
		-
		\frac{\rho}{2}
		\log\!\left(
		\frac{1+\sqrt{1-\rho}}{\sqrt{\rho}}
		\right)\,.
		\]
		Thus,
		\[
		e^{-2S(1;E)/h}
		=
		\left(
		\frac{1+\sqrt{1-\rho}}{\sqrt{\rho}}
		\right)^{2\nu-2}
		e^{-2S_0(\rho)/h}
		\bigl(1+\mathcal O(h)\bigr)\,.
		\]
		Consequently, the splitting asymptotics in \eqref{eq:eigenvalue-shift-J} becomes
		\begin{subequations}\label{eq:splitting-asymptotic-rho}
		\begin{equation}
			\lambda_k(h)
				=
				(4k+2\nu-2)h
				+
				C(\rho,\nu)h
				e^{-2S_0(\rho)/h}
				\bigl(1+o(1)\bigr)\,,
		\end{equation}
		where
		\begin{equation}
		C(\rho,\nu)
		=
		\frac{2}{\pi}
		\left(
		\frac{1+\sqrt{1-\rho}}{\sqrt{\rho}}
		\right)^{2\nu-2}\,,
		\qquad
		h\ll\rho=4kh\leq\rho_1<1\,.
		\end{equation}
		\end{subequations}
		In particular, the leading term $S_0(\rho)$ is independent
		of the flux term $\nu\,$. This is expected since the Aharonov--Bohm
		flux appears in the potential as a term of order $h^2$, hence it
		contributes as  an $\mathcal O(h)$ correction,
		yielding the explicit algebraic prefactor in the exponentially small 
		asymptotics.
	\end{remark}

	\begin{remark}[Low energy regime]\label{rem:small-rho-corrected}
		Let us consider more precisely the regime
		\[
		h\ll \rho=4kh\ll 1\,.
		\]
		As $\rho\to0\,$, we have
		\[
		S_0(\rho)
		=
		\frac12
		+\frac{\rho}{4}\log\rho
		-\frac{\rho}{2}\log 2
		-\frac{\rho}{4}
		+\frac{\rho^2}{16}
		+\mathcal O(\rho^3)\,.
		\]
		Consequently, since $\rho=4kh$\,,
		\[
		-\frac{2S_0(\rho)}{h}
		=
		-\frac1h
		-2k\log(kh)
		+2k
		-2k^2h
		+\mathcal O(k^3h^2)\,.
		\]
		In particular, in the regime
		\[
		k\to+\infty\,,
		\qquad
		k^2h\to0\,,
		\]
		we obtain
		\[
		e^{-2S_0(\rho)/h}
		=
		e^{-1/h}e^{2k}(kh)^{-2k}
		\bigl(1+o(1)\bigr)\,.
		\]
		Moreover,
		\[
		\left(
		\frac{1+\sqrt{1-\rho}}{\sqrt{\rho}}
		\right)^{2\nu-2}
		=
		(kh)^{1-\nu}\bigl(1+o(1)\bigr)\,.
		\]
		Hence, the  asymptotics in \eqref{eq:splitting-asymptotic-rho} reads as
		\begin{equation}\label{eq:splitting-asymptotic-rho*}
			\lambda_k(h)-e_k(h)
			=
			\frac{2}{\pi}
			e^{2k}
			k^{1-\nu-2k}
			h^{2-\nu-2k}
			e^{-1/h}
			\bigl(1+o(1)\bigr)\,.
		\end{equation}
		\end{remark}
		
		\begin{remark}[Comparison with Gannot's formula]\label{rem:Gannot}
		It is interesting to compare \eqref{eq:splitting-asymptotic-rho*} with the fixed-$k$
		asymptotics in \eqref{eq:Gannot*}. Indeed, as $k\to+\infty$, Stirling's formula gives
		\[
		(k-1)!\Gamma(k+\nu)
		=
		2\pi k^{2k+\nu-1}e^{-2k}
		\bigl(1+o(1)\bigr)\,,
		\]
		and therefore the explicit coefficient in \eqref{eq:Gannot*} satisfies
		\[
		\frac{4h^{-2k-\nu+2}}
		{(k-1)!\Gamma(k+\nu)}
		e^{-1/h}
		=
		\frac{2}{\pi}
		e^{2k}
		k^{1-\nu-2k}
		h^{2-\nu-2k}
		e^{-1/h}
		\bigl(1+o(1)\bigr)\,.
		\]
Thus, formally, the asymptotics in \eqref{eq:Gannot*} and \eqref{eq:splitting-asymptotic-rho*} match in the overlap regime 
		\[
		k\to+\infty\,,
		\qquad
		k^2h\to0\,.
		\]
		We emphasize, however, that \eqref{eq:Gannot*} is proved for fixed $k\,$,
		so this comparison does not by itself provide a uniform version of
		the fixed-$k$ asymptotics. We address this rigorously in the next subsection.
	\end{remark}


\subsection{Semi-excited states}\label{subsec:vl-regime}

We now consider the energy levels satisfying
\begin{equation}\label{eq:vl-energy}
	0<e_k(h)<\ell(h),
\end{equation}
where
\begin{equation}\label{eq:vl-ell}
	\ell(h)\longrightarrow0\,,
	\qquad
	\frac{\ell(h)}{h}\longrightarrow+\infty\,,
	\qquad
	\frac{\ell(h)^2}{h}\longrightarrow0
	\qquad (h\to0^+)\,.
\end{equation}

\begin{proposition}\label{prop:excited-states}
If \eqref{eq:vl-energy} and \eqref{eq:vl-ell} hold, then
\begin{equation}\label{eq:vl-final}
	\lambda_k(h)-e_k(h)
	=
	\frac{4h^{-2k-\nu+2}}
	{(k-1)!\Gamma(k+\nu)}
	e^{-1/h}
	\bigl(1+o(1)\bigr)\,,
\end{equation}
where $o(1)$ is uniform with respect to $k$.
\end{proposition}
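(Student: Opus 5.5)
Our plan is to rerun the quasimode scheme of Proposition~\ref{prop:boundary-quasimode} verbatim, replacing the Plancherel--Rotach input (which needs $kh$ bounded below) by direct estimates on $\Phi_{k,h}$ near $x=1$. The regime $e_k(h)<\ell(h)$ with $\ell^2/h\to0$ means $k^2h\to0$. Set $y=x^2/h$. For $x\in[a,1]$ with $a$ fixed, $y\geq a^2/h\gg k^2$, so the associated Laguerre polynomial is dominated by its leading term:
\[
L_{k-1}^{(\nu)}(y)=\frac{(-1)^{k-1}}{(k-1)!}\,y^{k-1}\Bigl(1+\mathcal O\bigl(k(k+\nu)/y\bigr)\Bigr),
\]
and $k(k+\nu)/y=\mathcal O(k^2h)=o(1)$ uniformly. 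This gives
\[
\Phi_{k,h}(x)=\frac{(-1)^{k-1}C_k}{(k-1)!}\,h^{-k-\nu/2+1/4}\,x^{2k+\nu-3/2}\,e^{-x^2/2h}\bigl(1+o(1)\bigr)
\]
uniformly on $[a,1]$. Differentiating the polynomial expansion termwise gives the analogue of Proposition~\ref{prop:asy-Phi}, namely $\Phi_{k,h}'=-(x/h)\Phi_{k,h}(1+o(1))$. One could also reuse the Wronskian argument of Proposition~\ref{prop:asy-Phi}, with $\Phi_*$ the explicit leading term, which solves the equation up to $\mathcal O(h^2 k^2)$ relative errors.

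Next we define $\Psi_{k,h}$, $J_{k,h}$, $c_{k,h}$ and $v_{k,h}$ exactly as in \eqref{eq:def-Psi}--\eqref{eq:quasimode-def}. The Laplace method at $t=1$ now uses the phase $t^2/2h$, with derivative $1/h$ at $t=1$. The factor $t^{-(4k+2\nu-3)}$ is slowly varying on the $\mathcal O(h)$ scale, since $kh\to0$. Hence
\[
J_{k,h}=\frac{h}{2}\,\frac{((k-1)!)^2}{C_k^2}\,h^{2k+\nu-1/2}\,e^{1/h}\bigl(1+o(1)\bigr)=\frac{(k-1)!\,\Gamma(k+\nu)}{4}\,h^{2k+\nu+1/2}e^{1/h}\bigl(1+o(1)\bigr),
\]
using \eqref{eq:vl-Ck}. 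Step~2 of Proposition~\ref{prop:boundary-quasimode} then gives $\lambda_k-e_k\approx -h^2c_{k,h}=h^2/J_{k,h}$. The power of $h$ must be double-checked against the scaling $h^{-1/4}$ in \eqref{eq:def-Phi}; with that normalization the exponent should come out as $h^{-2k-\nu+2}$, matching \eqref{eq:vl-final}.

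The remaining steps are the error controls. First, $\|v\|=1+o(1)$: near $[a_0,1]$, $\Phi$ is $\mathcal O(e^{-a_0^2/2h}h^{-Ck})$, and $h^{-Ck}=e^{Ck|\log h|}$ is $e^{o(1/h)}$ because $k\ll h^{-1/2}$. Second, the cross terms are supported on $[a_0,a_1]$; they are bounded by $c^2e^{(3a_1^2-a_0^2)/2h}$ times polynomial factors $h^{-Ck}=e^{o(1/h)}$. Choosing $a_0<a_1<1$ with $3a_1^2-a_0^2<2$, they are $o(h^2|c|)$. Third, the spectral projection argument uses the gap of Corollary~\ref{corol:gap}, valid since $e_k\leq\ell(h)\leq\rho^*$.

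The main obstacle is uniformity in $k$ up to $h^{-1/2}$. Every polynomial prefactor $h^{-Ck}$ and $k!$ must be shown to be negligible against the exponential margins $e^{-\epsilon/h}$, and the $o(1)$ in the Laguerre leading-term approximation must be controlled uniformly. For the latter, I would bound the ratio of consecutive coefficients of $L_{k-1}^{(\nu)}$ by $k(k+\nu)/y$ and sum the resulting geometric series.
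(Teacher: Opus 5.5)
Your proposal is correct and follows essentially the paper's own route: a leading-term Laguerre approximation with $C^1$ remainder $\mathcal O(k^2h)$ on $[a,1]$ (the paper's Lemma~\ref{lem:vl-laguerre}), the reduction-of-order quasimode with an endpoint Laplace evaluation of $J_{k,h}$, and a rerun of Proposition~\ref{prop:boundary-quasimode} using the gap from Corollary~\ref{corol:gap}. The only slip is the power of $h$ you flagged: $h^{-1/4}(x/\sqrt h)^{\nu+1/2}(x^2/h)^{k-1}$ gives $h^{-k-\nu/2+1/2}$, not $h^{-k-\nu/2+1/4}$, so $J_{k,h}=\tfrac14(k-1)!\,\Gamma(k+\nu)\,h^{2k+\nu}e^{1/h}(1+o(1))$ and $h^2/J_{k,h}$ is exactly the right-hand side of \eqref{eq:vl-final}.
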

\begin{remark}\label{rem:vl-gannot}~ 
\begin{itemize}
\item For fixed $k$, \eqref{eq:vl-final} is precisely Gannot's asymptotic
formula \eqref{eq:Gannot*}. The argument above shows, moreover, that this formula remains
uniform in the regime
$	k^2h\longrightarrow0\,.$
\item We may eventually choose
\[\ell(h)=h^{q},\quad \frac12<q<1.
\]
\end{itemize}
\end{remark}

The proof of Theorem~\ref{thm:ext-Gannot} is  now straightforward.
\begin{proof}[Proof of Theorem~\ref{thm:ext-Gannot}]
Apply Proposition~\ref{prop:excited-states} with $\ell(h)=4h^{1-\gamma}+(2\nu-2)h$\,.
\end{proof}
The proof of Proposition~\ref{prop:excited-states} does not  use the Plancherel--Rotach asymptotics.
Instead, we exploit directly the exact Laguerre representation of
the normalized eigenfunction on the half-line.

\medskip\noindent
We start with an elementary uniform estimate for Laguerre
polynomials. Later on we shall apply this result with $m=k-1$.
The $C^1$ estimate, and in particular the logarithmic derivative
estimate below, provide the estimates on the second
solution obtained by reduction of order.

\begin{lemma}\label{lem:vl-laguerre}
	Let $a\in(0,1)$ be fixed and let $m=m(h)\geq0$ satisfy
	\[
		m^2h\longrightarrow 0\quad\text{ as $h\to0^+$}\,.
	\]
	Then,  for $x\in[a,1]$\,,
	\begin{equation}\label{eq:vl-laguerre}
		L_m^{(\nu)}\left(\frac{x^2}{h}\right)
		=
		\frac{(-1)^m}{m!}
		\left(\frac{x^2}{h}\right)^m
		\bigl(1+R_{m,h}(x)\bigr)\,,
	\end{equation}
	where $R_{m,h}(x)$ belongs to $C^1([a,1])$ and satisfies
	\begin{equation}\label{eq:vl-R}
		\|R_{m,h}\|_{C^1([a,1])}=\mathcal O(m^2h)\,.
	\end{equation}
	In particular,
	\begin{equation}\label{eq:vl-log-L}
		\frac{d}{dx}
		\log\left|
		L_m^{(\nu)}\left(\frac{x^2}{h}\right)
		\right|
		=
		\frac{2m}{x}+\mathcal O(m^2h)\,,
	\end{equation}
	uniformly on $[a,1]$\,.
\end{lemma}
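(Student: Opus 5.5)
We will work directly from the explicit expansion of the Laguerre polynomial,
\[
L_m^{(\nu)}(y)=\sum_{j=0}^m \frac{(-1)^j}{j!}\binom{m+\nu}{m-j}\,y^j ,
\]
and factor out the top-degree term $\frac{(-1)^m}{m!}y^m$. Setting $y=x^2/h$ and $i=m-j$, we obtain
\[
L_m^{(\nu)}(y)=\frac{(-1)^m}{m!}y^m\Bigl(1+\sum_{i=1}^{m}(-1)^i\frac{m!}{(m-i)!}\binom{m+\nu}{i}\,y^{-i}\Bigr),
\]
so that $R_{m,h}(x)=\sum_{i=1}^m (-1)^i a_i\,(h/x^2)^i$, where $a_i=\frac{m!}{(m-i)!}\binom{m+\nu}{i}$. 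This formula is exact, and everything then reduces to bounding this finite sum uniformly for $x\in[a,1]$.

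The key estimate is the size of the coefficients. We have $\frac{m!}{(m-i)!}\le m^i$ and $\binom{m+\nu}{i}=\frac{(m+\nu)(m+\nu-1)\cdots(m+\nu-i+1)}{i!}\le \frac{(m+\nu)^i}{i!}$ for $i\le m$; the factors may be negative only when $\nu<i-1-m$, which is impossible since $i\le m$ and $\nu\ge0$. Hence
\[
|a_i|\,(h/x^2)^i\le \frac{\bigl(m(m+\nu)h/a^2\bigr)^i}{i!}.
\]
Writing $q:=m(m+\nu)h/a^2$, we have $q=\mathcal O(m^2h+mh)=o(1)$. When $m=0$ the sum is empty and $R\equiv0$; when $m\ge1$ we have $mh\le m^2h$, so $q=\mathcal O(m^2h)$. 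Summing gives $\sup_{[a,1]}|R_{m,h}|\le e^{q}-1=\mathcal O(m^2h)$.

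For the $C^1$ bound, we differentiate term by term. The derivative of $(h/x^2)^i$ equals $-\frac{2i}{x}(h/x^2)^i$, so
\[
|R'_{m,h}(x)|\le \frac{2}{a}\sum_{i\ge1} i\,\frac{q^i}{i!}=\frac{2}{a}\,q\,e^q=\mathcal O(m^2h).
\]
This proves \eqref{eq:vl-R}, and \eqref{eq:vl-laguerre} holds by construction.

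Finally, \eqref{eq:vl-log-L} follows by taking logarithmic derivatives:
\[
\frac{d}{dx}\log\Bigl|L_m^{(\nu)}(x^2/h)\Bigr|=\frac{2m}{x}+\frac{R'_{m,h}(x)}{1+R_{m,h}(x)} .
\]
For small $h$ we have $|R_{m,h}|\le 1/2$, so the second term is $\mathcal O(m^2h)$. This also shows that $L_m^{(\nu)}(x^2/h)$ does not vanish on $[a,1]$, which justifies taking the logarithm.

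The main point requiring care is the coefficient bound: it must be uniform in $m$ (possibly $m\to\infty$) and must use the hypothesis $m^2h\to0$ rather than a fixed-$m$ argument. The only subtleties are the sign of $\binom{m+\nu}{i}$ and the role of $\nu$, and both are handled by the estimate $|a_i|\le m^i(m+\nu)^i/i!$ obtained above.
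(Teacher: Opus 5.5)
Your proof is correct and takes essentially the same route as the paper: you factor the leading term out of the explicit finite-sum formula, bound the coefficients by $m^i(m+\nu)^i/i!$ to get $|R_{m,h}|\le e^q-1$ and $|R_{m,h}'|\le \frac{2}{a}\,q\,e^q$ with $q=m(m+\nu)h/a^2$, and then take the logarithmic derivative. Your explicit remarks on the positivity of $\binom{m+\nu}{i}$ for $i\le m$, the trivial case $m=0$, and the non-vanishing of $L_m^{(\nu)}(x^2/h)$ on $[a,1]$ (which justifies taking the logarithm) are small clarifications that the paper leaves implicit.
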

\begin{proof}
	We use the explicit finite-sum representation of the generalized
	Laguerre polynomials (see \cite{DLMF}, Eq.~(18.5.12)):
	\[
		L_m^{(\nu)}(X)
		=
		\sum_{j=0}^{m}
		(-1)^j
		\binom{m+\nu}{m-j}
		\frac{X^j}{j!}\,.
	\]
Here
\[
\binom{m+\nu}{m-j}
=
\frac{(m+\nu)(m+\nu-1)\cdots(\nu+j+1)}
{(m-j)!}\,.
\]
Setting $r=m-j$, we obtain
	\[
		L_m^{(\nu)}(X)
		=
		\frac{(-1)^mX^m}{m!}
		\left(
		1+
		\sum_{r=1}^m
		(-1)^r
		\frac{m!}{(m-r)!}
		\binom{m+\nu}{r}X^{-r}
		\right)\,.
	\]
	Thus, for $x\in[a,1]$, we define
	\[
		R_{m,h}(x)
		:=
		\sum_{r=1}^m
		(-1)^r
		\frac{m!}{(m-r)!}
		\binom{m+\nu}{r}
		\left(\frac{h}{x^2}\right)^r\,.
	\]
	Since $\nu$ is fixed and  non negative, for $1\leq r\leq m$, we have
\[
	\frac{m!}{(m-r)!}\leq m^r\,,
	\qquad
	\binom{m+\nu}{r}
	\leq
	\frac{(m+\nu)^r}{r!}\,.
\]
Consequently, since $x\in[a,1]$\,,
\[
	|R_{m,h}(x)|
	\leq
	\sum_{r=1}^{m}
	\frac{1}{r!}
	\left(
		\frac{m(m+\nu)h}{a^2}
	\right)^r
	\leq
	\exp\left(
		\frac{m(m+\nu)h}{a^2}
	\right)-1\,.
\]
Since $\nu$ and $a>0$ are fixed and $m^2h\to0$, we obtain
\[
	|R_{m,h}(x)|=\mathcal O(m^2h)\,,
\]
uniformly for $x\in[a,1]$. Differentiating the finite sum defining $R_{m,h}$ gives
\[
	R_{m,h}'(x)
	=
	-\frac{2}{x}
	\sum_{r=1}^m
	r(-1)^r
	\frac{m!}{(m-r)!}
	\binom{m+\nu}{r}
	\left(\frac{h}{x^2}\right)^r\,.
\]
Hence, since $x\in[a,1]$\,,
\[
	|R_{m,h}'(x)|
	\leq
	\frac{2}{a}
	\sum_{r=1}^{m}
	\frac{1}{(r-1)!}
	\left(
		\frac{m(m+\nu)h}{a^2}
	\right)^r\,.
\]
Since
\[
	\sum_{r=1}^{\infty} \frac{t^r}{(r-1)!}
	=
	t e^t\,,
\]
we obtain
\[
	|R_{m,h}'(x)|
	\leq
	\frac{2}{a}
	\frac{m(m+\nu)h}{a^2}
	\exp\left(
		\frac{m(m+\nu)h}{a^2}
	\right)
	=
	\mathcal O(m^2h)\,,
\]
uniformly for $x\in[a,1]$. This proves \eqref{eq:vl-R}. Since $R_{m,h}=o(1)$ uniformly,
	we also have
	\[
		\frac{R_{m,h}'(x)}{1+R_{m,h}(x)}
		=
	\mathcal	O(m^2h)\,,
	\]
	and \eqref{eq:vl-log-L} follows by taking the logarithmic
	derivative in \eqref{eq:vl-laguerre}.
\end{proof}

\medskip\noindent
\begin{proof}[Proof of Proposition~\ref{prop:excited-states}]
We split the proof into two steps, working throughout under the
assumption
$
k^2h\to0\,.
$
We first derive accurate asymptotics for $\Phi_{k,h}$ and its derivative near $x=1\,$. We then use the tunneling argument of the previous subsection to obtain the eigenvalue asymptotics.

\medskip
\noindent\textbf{Step 1. Asymptotics of $\Phi_{k,h}$ and $\Phi_{k,h}'$ near $x=1\,$.}

\medskip
\noindent
We apply Lemma~\ref{lem:vl-laguerre} with $m=k-1$. Recall that
\[	\Phi_{k,h}(x)
	=
	h^{-1/4}C_k
	\left(\frac{x}{\sqrt h}\right)^{\nu+1/2}
	L_{k-1}^{(\nu)}\left(\frac{x^2}{h}\right)
	e^{-x^2/(2h)}\,,\]
where $C_k$ is introduced in \eqref{eq:vl-Ck}.
Setting
\[
	p_k=2k+\nu-\frac32\,,
\]
and
\[
	D_{k,h}
	=
	\frac{(-1)^{k-1}C_k}{(k-1)!}
	h^{-k-\frac{\nu}{2}+\frac{1}{2}}\,,
\]
we get by Lemma~\ref{lem:vl-laguerre},
\begin{equation}\label{eq:vl-Phi}
	\Phi_{k,h}(x)
	=
	D_{k,h}x^{p_k}e^{-x^2/(2h)}
	\bigl(1+R_{k-1,h}(x)\bigr)\,,
\end{equation}
uniformly for $x\in[a,1]$, with
\[
	\|R_{k-1,h}\|_{C^1([a,1])}
	=
	\mathcal O(k^2h)
	=
	o(1)\,.
\]
In particular, for $h$ sufficiently small, $\Phi_{k,h}$ has a constant sign on $[a,1]$. Taking the logarithmic derivative in \eqref{eq:vl-Phi}, we obtain
\begin{equation}
	\frac{\Phi_{k,h}'(x)}{\Phi_{k,h}(x)}
	=
	\frac{p_k}{x}
	-\frac{x}{h}
	+
	\frac{R_{k-1,h}'(x)}{1+R_{k-1,h}(x)}\,.
\end{equation}
Since
\[
	\|R_{k-1,h}\|_{C^1([a,1])}=\mathcal O(k^2h)\,,
\]
it follows that
\begin{equation}\label{eq:vl-log-Phi}
	\frac{\Phi_{k,h}'(x)}{\Phi_{k,h}(x)}
	=
	\frac{p_k}{x}-\frac{x}{h}+\mathcal O(k^2h)=-\frac{x}{h}\Bigl(1+\mathcal O(k^2h)\Bigr)\,,
\end{equation}
uniformly for $x\in[a,1]\,$.

\vspace{0.5cm}
\noindent\textbf{Step 2. Tunneling argument.}

\medskip
\noindent
We now follow the argument of Subsection~\ref{subsec:trial-state}, replacing the
Plancherel-Rotach estimates by \eqref{eq:vl-Phi} and
\eqref{eq:vl-log-Phi}. Define, as in \eqref{eq:def-Psi},
\[	\Psi_{k,h}(x)
	=
	\Phi_{k,h}(x)
	\int_a^x
	\frac{dt}{\Phi_{k,h}(t)^2}\,.\]
Using \eqref{eq:vl-Phi}, we have
\[	\frac{1}{\Phi_{k,h}(t)^2}
	=
	D_{k,h}^{-2}
	t^{-2p_k}
	e^{t^2/h}
	\bigl(1+\mathcal O(k^2h)\bigr)\,,\]
uniformly for $t\in[a,1]$. An endpoint Laplace estimate then gives, uniformly on fixed compact
subintervals of $(a,1]$,
\begin{equation}\label{eq:vl-Psi-asymp}
	\Psi_{k,h}(x)
	=
	\frac{h}
	{2x-2p_kh/x}
	\frac{1}{\Phi_{k,h}(x)}
	\bigl(1+o(1)\bigr)=\frac{h}{2x}{\Phi_{k,h}(x)}
	\bigl(1+o(1)\bigr)\,.
\end{equation}
Moreover, using the Wronskian identity
\[	W(\Phi_{k,h}\,,\,\Psi_{k,h})=1\,,\]
together with \eqref{eq:vl-log-Phi} and
\eqref{eq:vl-Psi-asymp}, we obtain
\begin{equation}\label{eq:vl-Psi-prime}
	\Psi_{k,h}'(x)
	=
	\frac{1}{2\Phi_{k,h}(x)}
	\bigl(1+o(1)\bigr)\,.
\end{equation}
Thus, the estimates required in the reduction-of-order and quasimode
argument of Subsection~\ref{subsec:trial-state} remain valid in the present regime. Define
\[v_{k,h}=\Phi_{k,h}+c_{k,h}\chi\Psi_{k,h}\]
with $\chi=0$ on $[0,a]$ and $\chi=1$ on $[1,+\infty)$. Then $v_{k,h}$ satisfies the Dirichlet boundary condition at $x=1$ when
\[c_{k,h}=-\frac{\Phi_{k,h}(1)}{\Psi_{k,h}(1)}\,.\]
Using \eqref{eq:vl-Psi-asymp}, we obtain
\[c_{k,h}
	=
	-\frac{2}{h}\Phi_{k,h}(1)^2
	\bigl(1+o(1)\bigr)\,.\]
The proof of Proposition~\ref{prop:boundary-quasimode} can then be
repeated with the same argument to obtain
\[	\lambda_k(h)-e_k(h)
	=
	2h\,\Phi_{k,h}(1)^2
	\bigl(1+o(1)\bigr)\,,\]
uniformly in the regime $k^2h\to0$.

\medskip\noindent
Using \eqref{eq:vl-Phi} at $x=1$ and the definition
of $D_{k,h}$, we obtain
\[	\Phi_{k,h}(1)^2
	=
	\frac{2h^{-2k-\nu+1}}
	{(k-1)!\Gamma(k+\nu)}
	e^{-1/h}
	\bigl(1+o(1)\bigr)\,,
\]
which finishes the proof of Proposition~\ref{prop:excited-states}.
\end{proof}
\subsection{Neumann realization}\label{subsec:Neumann}

If we impose Neumann boundary condition at $x=1\,$, we obtain the Neumann realization $T_h^N$ with eigenvalues
\[\lambda_1^N(h)<\lambda_2^N(h)<\cdots.\]
We establish the analogue of Theorem~\ref{thm:main} for the Neumann realization.

\begin{proposition}\label{prop:Neumann}
Let $0<\rho^*<1$ be given. If $(k,h)$ satisfy
\[    \ell(h)\leq e_k(h)\leq \rho^*\,,\quad \ell(h)=|\ln h|^{-1},\]
then
\[\lambda_k^N(h)=e_k(h)- 
	\frac{2h}{\pi}
	\exp\!\left(-\frac{2S_{k,h}(1)}{h}\right)
	\bigl(1+\delta_k(h)\bigr)\,,
\]
where 
\[\sup_{\ell(h)\leq e_k(h)\leq \rho^*}|\delta_k(h)|\longrightarrow0\quad\text{as $h\longrightarrow0^+$}.\]
\end{proposition}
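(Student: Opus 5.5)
The plan is to repeat the quasimode argument of Subsection~\ref{subsec:trial-state}, now imposing a Neumann condition at $x=1$ instead of a Dirichlet one. We keep $\Phi=\Phi_{k,h}$ and the reduction-of-order solution $\Psi=\Psi_{k,h}$ from \eqref{eq:def-Psi}, normalized by $W(\Phi,\Psi)=1$, and the same cut-off $\chi$ (equal to $0$ on $[0,a_0]$ and to $1$ on $[a_1,1]$). The trial state is
\[
v^N=\Phi+c^N\chi\Psi\,,\qquad c^N:=-\frac{\Phi'(1)}{\Psi'(1)}\,,
\]
so that $(v^N)'(1)=0$ and $v^N$ lies in the form domain of $T_h^N$, indeed in its operator domain. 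The choice $\ell(h)=|\ln h|^{-1}$ satisfies $\ell\to0$ and $h^{-1}\ell\to\infty$. Hence $k\to\infty$ uniformly, and \eqref{eq:asy-Phi}, Proposition~\ref{prop:asy-Phi} and Proposition~\ref{prop:dc-Phi} all apply.

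\emph{Step 1: asymptotics of $c^N$.} Proposition~\ref{prop:asy-Phi} gives $\Phi'(1)=-h^{-1}S'(1)\Phi(1)(1+\mathcal O(k^{-1}))$; the sign is negative since $\Phi$ is decaying. Writing $\Psi'=\Phi'J(x)+1/\Phi$ with $J(x)=\int_a^x\Phi^{-2}$, and using the endpoint Laplace asymptotics $J_{k,h}\sim \frac{h}{2S'(1)}\Phi(1)^{-2}$ from Step~1 of the proof of Proposition~\ref{prop:boundary-quasimode}, we get
\[
\Psi'(1)=\frac{1}{\Phi(1)}\Bigl(1-\tfrac12+o(1)\Bigr)=\frac{1}{2\Phi(1)}(1+o(1))\,.
\]
Therefore
\[
c^N=\frac{2S'(1)}{h}\Phi(1)^2(1+o(1))=+\frac{1}{J_{k,h}}(1+o(1))=\frac{2}{\pi h}e^{-2S_{k,h}(1)/h}(1+o(1))\,.
\]
This is exactly $-c_{k,h}$ from \eqref{eq:asy-ch*}. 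The delicate point is that $\Psi'(1)$ comes from a cancellation between two terms of the same order, so it needs the relative accuracy $o(1)$ in both $J$ and $\Phi'/\Phi$. Proposition~\ref{prop:asy-Phi} supplies this, since its error is $\mathcal O(k^{-1})$ and the Laplace method has error $\mathcal O(h)$ because $S'(1)\ge\sqrt{1-\rho^*}$ stays away from zero.

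\emph{Step 2: Rayleigh quotient.} The integration by parts goes as before, but the boundary term at $1$ changes. For $L_h=T_h^N-E$,
\[
\langle L_h(\chi\Psi),\Phi\rangle=-h^2\bigl(\Psi'(1)\Phi(1)-\Psi(1)\Phi'(1)\bigr)-h^2\bigl[\text{boundary term of }\Phi\bigr]\,.
\]
It is cleaner to compute the quadratic form $\int_0^1\bigl(h^2|v'|^2+V_h|v|^2-E|v|^2\bigr)$ directly. Since $L_hv^N=c^N[L_h,\chi]\Psi$ pointwise, integrating by parts once with $(v^N)'(1)=0$ gives
\[
q(v^N)=\langle c^N[L_h,\chi]\Psi,v^N\rangle\,.
\]
Expanding $v^N=\Phi+c^N\chi\Psi$ and integrating $\langle L_h(\chi\Psi),\Phi\rangle$ by parts to move $L_h$ onto $\Phi$ (which satisfies $L_h\Phi=0$), the boundary contribution at $1$ is $-h^2(\Psi'(1)\Phi(1)-\Psi(1)\Phi'(1))=-h^2$. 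Hence
\[
q(v^N)=-h^2c^N+(c^N)^2\langle[L_h,\chi]\Psi,\chi\Psi\rangle=-\frac{2h}{\pi}e^{-2S(1)/h}(1+o(1))\,.
\]
The quadratic term is negligible by \eqref{eq:cond-a-i}, exactly as in \eqref{eq:quadratic-small}. The norm satisfies $\|v^N\|=1+o(1)$ and the residual $\|L_hv^N\|$ is bounded as in \eqref{eq:residual-bound}.

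\emph{Step 3: localization.} To conclude $\lambda_k^N=E+q(v^N)/\|v^N\|^2\,(1+o(1))$ we need a gap analogous to Corollary~\ref{corol:gap}: $|\lambda^N_j-e_k|\ge 2h$ for $j\neq k$. Two bounds provide this.
\begin{itemize}
\item \emph{Lower bound.} The Neumann and Dirichlet eigenvalues interlace: $\lambda_{j}^N\le\lambda_j^D$ and $\lambda_{j+1}^N\ge\lambda_j^D$ (rank-one perturbation at $x=1$). Hence $\lambda_{k+1}^N\ge\lambda_k^D\ge e_k$. This is not quite $e_k+2h$, so it is combined with the next bound.
\item \emph{Upper bound.} The Agmon cutoff argument of Proposition~\ref{prop:Vanlaere}, applied to $T_h^N$, gives $\lambda_j^N\le e_j+e^{-c/h}$.
\end{itemize}
A cleaner route is to compare $T_h^N$ with the operator on $(0,1)$ where the potential is replaced by $\max(V_h,\,\cdot\,)$, or to use a Neumann-bracketing partition at $x=a$. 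The Neumann problem on $(a,1)$ has lowest eigenvalue at least $a^2>\rho^*+4h$, and the Dirichlet problem on $(0,a)$ has eigenvalues $e_j+\mathcal O(e^{-c/h})$. This yields $\lambda^N_{k+1}\ge e_{k+1}-e^{-c/h}$, and this is the step I expect to require the most care. With the gap in hand, the spectral-projection argument of Step~3 of Proposition~\ref{prop:boundary-quasimode} applies verbatim, and it identifies $\lambda_k^N$ as $e_k-\frac{2h}{\pi}e^{-2S_{k,h}(1)/h}(1+o(1))$.
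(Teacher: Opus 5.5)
Steps 1 and 2 are the paper's proof written out in more detail. You use the same trial state $\Phi+c^N\chi\Psi$ with $c^N=-\Phi'(1)/\Psi'(1)$ (the paper's $d_{k,h}$), reach the same conclusion $c^N=J_{k,h}^{-1}(1+o(1))=-c_{k,h}(1+o(1))$, and use the same Wronskian boundary term $-h^2$, which gives $q(v^N)=-h^2c^N(1+o(1))$. You are right to use $\Phi'(1)=-h^{-1}S'(1)\Phi(1)(1+\mathcal O(k^{-1}))$. The plus sign printed in Proposition~\ref{prop:asy-Phi} is a typo; with it one would get $\Psi'(1)\sim\frac{3}{2\Phi(1)}$ instead of $\frac{1}{2\Phi(1)}$.

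The gap is in Step 3, in the lower bound $\lambda^N_{k+1}(h)\geq e_k(h)+2h$, which is the only nontrivial input to the projection argument.

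\emph{Interlacing.} It only gives $\lambda^N_{k+1}\geq\lambda^D_k\geq e_k$. Combining it with an upper bound on $\lambda^N_j$ cannot improve a lower bound.

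\emph{Bracketing.} The route you sketch does not supply the bound either. A bracketing lower bound at $x=a$ needs a Neumann condition at $x=a$ on both pieces; a Dirichlet condition at $a$ on $(0,a)$ restricts the form domain and gives an upper bound. For the correct mixed problem on $(0,a)$, the claim that its eigenvalues are $\geq e_j-e^{-c/h}$ is the same kind of Neumann lower bound you set out to prove, so nothing is gained.

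\emph{The paper's fix.} The paper gets the bound from Agmon decay of the Neumann eigenfunctions themselves. The Agmon identity has no boundary term at $x=1$ because $u'(1)=0$. One then cuts these eigenfunctions off near $x=1$ and uses them as trial functions in the min-max principle.

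\emph{A simpler fix.} You only need a margin of order $h$, so an IMS localization is enough. Take $\chi_1^2+\chi_2^2=1$ with $\chi_1=0$ near $x=1$ and $\operatorname{supp}\chi_2\subset[a,1]$, where $a^2>\rho^*$. Writing $q^N,q^\infty$ for the forms of $T_h^N$ and $T_h^\infty$, you have
\[
q^N(u)\geq q^\infty(\chi_1u)+(a^2-Ch^2)\|\chi_2u\|^2-Ch^2\|u\|^2\,.
\]
Hence, for $\mu<a^2-2Ch^2$, the map $u\mapsto\chi_1u$ is injective on the spectral subspace of $T_h^N$ below $\mu$, and sends it into a subspace on which $q^\infty\leq(\mu+Ch^2)\|\cdot\|^2$. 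Take $\mu=e_{k+1}-2Ch^2$; this is admissible since $e_{k+1}\leq\rho^*+4h<a^2$. Then $\lambda^N_{k+1}>e_{k+1}-2Ch^2\geq e_k+2h$. With this the rest of your argument goes through.
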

\begin{proof}

Let $\bar\rho\in(\rho*,1)$. We start with rough bounds for energy levels  with label $j$ satisfying $e_j(h)\leq \bar\rho$. Firstly, by the min-max principle and Proposition~\ref{prop:Vanlaere}, we have
\[\lambda^N_j(h)\leq \lambda_j^D(h)\leq e_j(h)+\mathcal O(h^\infty)\,.\]
Secondly, by Agmon estimates, the normalized eigenfunction corresponding to
$\lambda_j^N(h)$ obeys an exponential decay law in $[\bar\rho,1]$. Thus, by the min-max principle,
\[\lambda_j^N(h)\geq \lambda_k^D(h)+\mathcal O(h^\infty)\geq e_j(h)+\mathcal O(h^\infty)\,.\]
Therefore, there exists $h_0>0$ such that, for $h\in(0,h_0]$ and for every integer $j\not=k$, we have
\[|\lambda_j^N(h)-E|\geq 2h\,.\]
Now we consider a trial state
\[u=\Phi+d_{k,h}\chi\Psi\]
with $\Phi=\Phi_{k,h}$ and $\Psi=\Psi_{k,h}$ as in \eqref{eq:def-Phi} and \eqref{eq:def-Psi} respectively. We require that $u$ satisfies Neumann boundary condition, $u'(1)=0,$ which yields
\[d_{k,h}=-\frac{\Phi'(1)}{\Psi'(1)}\,.\]
By the Plancherel-Rotach asymptotics for $\Phi$ and $\Phi'$ in \eqref{eq:asy-Phi} and Proposition~\ref{prop:asy-Phi}, we obtain that
\[d_{k,h}=\frac{1}{J_{k,h}}\left(1+o(1)\right)=-c_{k,h}(1+o(1))\,.\]
A slight adjustment of the proof of Proposition~\ref{prop:boundary-quasimode} yields
\[\lambda_k^N(h)-e_k(h)=h^2c_{k,h}\, (1+o(1))\,.\]
To finish the proof, we use the asymptotics for $c_{k,h}$ in Proposition~\ref{prop:boundary-quasimode}.
\end{proof}
\section{Above the tunneling regime}\label{s3}

\subsection{Preliminaries}
In this section, we consider eigenvalues
\[E> 1\,,\]
which are not covered by Theorem~\ref{thm:main}. Neglecting the term of order $h^2$ in the potential, and identifying  the Dirichlet boundary condition with a hard wall, our operator $T_h$ corresponds to
\[T_h^{\rm eff}=- h^2 \frac{d^2}{dx^2}+V\,,\]
with
\[V(x)=\begin{cases}
    x^2&0\leq x\leq 1\,,\\
    +\infty&x>1\,.
\end{cases}\]
Since $\{V(x)\leq E\}=[0,1],$ the Bohr-Sommerfeld quantization condition reads \cite{Olver, HR}
\begin{equation}\label{eq:Bohr-Sommerfeld}
\int_0^1\sqrt{E-x^2}\,dx\sim k\pi h\,,
\end{equation}
that is, the energy level $E>1$ satisfies
\begin{equation}\label{eq:Bohr-Sommerfeld2}
\frac{E}{2} \arcsin\left(\frac{1}{\sqrt{E}}\right) + \frac{1}{2}\sqrt{E - 1} \sim k\pi h\,,
\end{equation}
where the term on the left hand side is bounded below by $\pi/4$, thereby enforcing the following condition on the label $k$
\begin{equation}\label{eq:BS-label}
\rho:=4kh\geq 1+\mathcal O(h)\,.
\end{equation}
We will analyze this regime directly by means of properties of the
	Whittaker function, without passing through the hard wall potential.
	Unlike the tunneling regime covered in Theorem~\ref{thm:main}, we will not use
	a semiclassical argument.
	Instead, our analysis relies entirely on the uniform asymptotic
	expansions of Whittaker functions established by Dunster
	\cite{Dunster} together with the corresponding asymptotic
	description of their positive zeros obtained by Gabutti and
	Gatteschi \cite{GG2001}. The final inversion step is combined with the
	classical McMahon expansion for the positive zeros of Bessel functions
	(see, e.g., \cite[\S10.21]{DLMF}), which we simply invert in the present
	spectral setting.

	\mn
	Throughout this section, we assume that
	\begin{equation}\label{eq:sc}
		\rho_0\leq \rho=4kh\leq \rho_1\,,
		\qquad
		1<\rho_0<\rho_1<+\infty\,.
\end{equation}

\subsection{Supercritical regime}\label{subsec:sc}
\subsubsection{Preliminaries and main statements}
 We assume that \eqref{eq:sc} holds and derive an accurate asymptotic
 expansion of the $k$-th eigenvalue of $T_h\,$.

\begin{theorem}\label{thm-oscillatory-regime} 
Assume that \eqref{eq:sc} holds, and let 
\[\mu=\frac{\nu}{2}\,,\quad \rho=4kh\,.\]
Then, as $h\to0^+\,$,  
\[
\lambda_k(h)
=
E_0(\rho)
+
h E_1(\rho)
+
\mathcal O(h^2)\,,
\]
where  
\begin{itemize}
\item \(E_0(\rho)>1\) is the solution of
\[
\action(E_0(\rho))=\frac{\pi\rho}{4}\,,
\]
\item For $E>1$\,,
\[\action(E)
=
\frac12\sqrt{E-1}
+
\frac E2\arcsin(E^{-1/2}),\]
\item $E_1(\rho)$ is defined by 

	\[
	E_1(\rho)=
	\frac{2\pi\left(\mu-\frac14\right)}
	{\arcsin\left(E_0(\rho)^{-1/2}\right)}\,.
	\]
\end{itemize}
\end{theorem}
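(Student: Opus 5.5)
The plan is to work with the Whittaker form \eqref{Whittaker-spectral} of the spectral equation, namely $M_{\kappa,\mu}(1/h)=0$ with $\kappa=\lambda/(4h)$ and $\mu=\nu/2$. In the regime \eqref{eq:sc} we have $\kappa\asymp h^{-1}\to+\infty$. Setting $z=1/h=4\kappa\,x$ gives $x=1/\lambda$. Since any eigenvalue in this window satisfies $\lambda\ge E_0(\rho_0)-o(1)>1$, we have $x<1$, so $z$ lies strictly to the left of the turning point $z=4\kappa$, in the oscillatory region and uniformly away from it. There I would use Dunster's uniform expansion of $M_{\kappa,\mu}(4\kappa x)$ for large $\kappa$, uniformly for $x$ in compacts of $(0,1)$ and $\mu$ fixed. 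It has the form
\[
M_{\kappa,\mu}(4\kappa x)=\mathcal A_\kappa\Bigl(\frac{\zeta}{x-x^2}\Bigr)^{1/4}\Bigl(J_{2\mu}\bigl(4\kappa\zeta^{1/2}\bigr)\bigl(1+\mathcal O(\kappa^{-2})\bigr)+\frac{\zeta^{1/2}}{\kappa}\,J_{2\mu+1}\bigl(4\kappa\zeta^{1/2}\bigr)\,\mathcal O(1)\Bigr),
\]
where $\mathcal A_\kappa\neq0$ is an explicit normalising factor and $2\zeta^{1/2}=\int_0^x\sqrt{(1-s)/s}\,ds$. The error terms should be understood relative to the Bessel envelope.

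The next step is to identify the Bessel argument. With $x=1/\lambda$ and $s=y^2/\lambda$, a direct computation gives
\[
4\kappa\zeta^{1/2}=\frac{1}{h}\int_0^1\sqrt{\lambda-y^2}\,dy=\frac{\action(\lambda)}{h}.
\]
The spectral equation therefore becomes a perturbed Bessel zero condition: $\action(\lambda)/h$ equals a zero $j_{2\mu,n}$ of $J_{2\mu}$ up to an error. Since the correction term carries an extra factor $\kappa^{-1}$ relative to an oscillation of unit amplitude, this error is $\mathcal O(h)$ in the argument. This is exactly the content of Gabutti--Gatteschi's asymptotics \cite{GG2001} for the positive zeros of $M_{\kappa,\mu}$, which I would invoke directly.

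McMahon's expansion gives $j_{2\mu,n}=\pi\bigl(n+\mu-\tfrac14\bigr)+\mathcal O(1/n)$, and $n\asymp h^{-1}$ here. Hence
\[
\action(\lambda)=\pi h\Bigl(n+\frac{\nu}{2}-\frac14\Bigr)+\mathcal O(h^2).
\]
To fix the index, I would argue that $n=k$. The eigenfunction \eqref{regular-solution} attached to $\lambda_k$ has exactly $k-1$ zeros in $(0,1)$ by Sturm--Liouville theory. Moreover, the map $\lambda\mapsto \action(\lambda)/h$ is increasing, with $\action'(E)=\tfrac12\arcsin(E^{-1/2})>0$. So as $\lambda$ increases, the successive Bessel zeros are crossed one at a time. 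The first index must be calibrated: at $\nu=1/2$ the problem reduces to the regular Dirichlet problem, and there the count gives $I=\pi kh$. Since the labelling is continuous in $\nu$ (Feynman--Hellmann), the matching $n=k$ persists for all $\nu$.

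Finally, I would invert the relation. Write $\action(\lambda)=\pi\rho/4+\pi h(\mu-\tfrac14)+\mathcal O(h^2)$ and Taylor expand around $E_0(\rho)$, using $\action(E_0(\rho))=\pi\rho/4$ and the fact that $\action'$ is bounded below on compacts of $(1,\infty)$. This gives
\[
\lambda_k=E_0+\frac{\pi h(\mu-\frac14)}{\action'(E_0)}+\mathcal O(h^2)=E_0+hE_1+\mathcal O(h^2),
\]
with $E_1$ as stated.

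The main obstacle is uniformity. The error terms in Dunster's expansion must be uniform in $\kappa\asymp h^{-1}$ and in $x=1/\lambda$ ranging over a compact subset of $(0,1)$, where $\rho$ ranges over $[\rho_0,\rho_1]$. In addition, the zero of the perturbed function must lie within $\mathcal O(h)$ of the Bessel zero. For the second point I would use the fact that near a zero, $J_{2\mu}'$ is comparable to the envelope, which is of size $(\action/h)^{-1/2}$, while the perturbation is $\mathcal O(h)$ times that envelope. The intermediate value theorem then gives exactly one zero in each $\mathcal O(h)$-window.
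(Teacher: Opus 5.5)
Your proof is essentially correct but takes a different route from the paper, and the index step needs more than you give it.

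The paper works with the $z$-zeros of $M_{\kappa,\mu}$ at fixed $\kappa$. Lemma~\ref{lemma-index} (Sturm oscillation) shows that $1/h$ is the $k$-th positive zero of $M_{\kappa_k,\mu}$. The Gabutti--Gatteschi theorem is stated for the $k$-th zero in terms of $j_{2\mu,k}$, so it fixes the index at no extra cost. The price is expanding their $\gamma$-dependent action relation in $\xi$, where the terms $-4\pi\mu/(\xi E)$ cancel between $S_W$ and $S_B$, and checking that the $C_0$ correction only enters at $\mathcal O(\xi^{-2})$.

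You instead evaluate Dunster's fixed-$\mu$ Bessel expansion along the spectral curve $z=1/h$, $x=1/\lambda$. There the Bessel argument is exactly $I(\lambda)/h$, which I checked. The action, and via McMahon the Langer shift $\mu-\frac14$, then appear with no $\gamma$ bookkeeping and no second-order Gabutti--Gatteschi term. This is cleaner and closer to the heuristic of Appendix~\ref{AppA}.

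The cost is that you only obtain $I(\lambda)/h=j_{2\mu,n}+\mathcal O(h)$ for some unknown $n$, so the index needs separate input.
\begin{itemize}
\item Your calibration at $\nu=1/2$ relies on the classical regular-case quantization together with its labelling.
\item Your continuity step needs the Dunster remainder to be uniform for $\mu$ in the segment between $1/4$ and $\nu/2$.
\item It also needs a priori confinement of $\lambda_k(h;\nu')$ to a fixed compact subset of $(1,\infty)$ for every intermediate $\nu'$. The lower bound is $e_k(h)\geq\rho_0-2h$. For the upper bound, compare with $-h^2\partial_x^2+1+h^2(\nu^2-\frac14)/x^2$, whose Dirichlet eigenvalues are $1+h^2j_{\nu,k}^2$.
\end{itemize}

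There is a more direct way to fix $n$, and you already name it: the Sturm count. Apply the same Bessel expansion in the $x$-variable at fixed $\kappa=\kappa_k$ on $(0,1/\lambda_k]$. The $k-1$ interior zeros of the eigenfunction then force $n=k$. This is precisely Lemma~\ref{lemma-index} combined with the Gabutti--Gatteschi ordering, and it makes the $\nu=1/2$ calibration unnecessary.

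Two smaller points.
\begin{itemize}
\item Your a priori bound $\lambda\geq E_0(\rho_0)-o(1)$ is circular, since it uses the result being proved. Use $\lambda_k\geq e_k(h)$ instead.
\item The intermediate value theorem only gives at least one zero per window. Uniqueness follows from Theorem~\ref{thm:gap}, because the windows have width $\mathcal O(h^2)$ in $\lambda$ while consecutive eigenvalues differ by more than $4h$. Your continuity argument does not actually need uniqueness.
\end{itemize}
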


\begin{remark}\label{rem:BSL}
The function $I$ admits the action representation
\begin{equation}\label{eq:action-I}
I(E)=\int_0^1\sqrt{E-y^2}\,dy\,,
\qquad E>1,
\end{equation}
and is the classical action of the harmonic oscillator on the
interval $(0,1)\,$. Thus, the leading-order equation
\begin{equation}\label{eq:BS-principal}
I(E_0(\rho))=\frac{\pi\rho}{4}
\end{equation}
can be viewed as the principal Bohr--Sommerfeld quantization
condition. The subprincipal term in Theorem~\ref{thm-oscillatory-regime}
can moreover be absorbed into 
a shift of the quantization parameter. Indeed,
\[
I'(E)=\frac12\arcsin(E^{-1/2})\,,
\]
and differentiating \eqref{eq:BS-principal} gives
\[
E_0'(\rho)
=
\frac{\pi}
{2\arcsin(E_0(\rho)^{-1/2})}\,.
\]
Since $\mu=\nu/2\,$, we therefore have
\begin{equation}\label{eq:E1-E0prime}
E_1(\rho)
=
4\left(\mu-\frac14\right)E_0'(\rho).
\end{equation}
Consequently, Taylor's formula yields
\[
E_0(\rho)+hE_1(\rho)
=
E_0\left(\rho+4h\left(\mu-\frac14\right)\right)
+\mathcal O(h^2)\,,
\]
uniformly for $\rho\in[\rho_0,\rho_1]\,$. Since $\rho=4kh$, the asymptotic formula of Theorem~\ref{thm-oscillatory-regime}
can equivalently be written as
\begin{equation}\label{eq:BSL-quantization}
\lambda_k(h)
=
E_0\left(
4h\left(k+\frac{\nu}{2}-\frac14\right)
\right)
+\mathcal O(h^2)\,,
\end{equation}
which is the form stated in Theorem~\ref{thm:supercritical-intro}.
The shift $\nu/2-1/4$ is the Bohr--Sommerfeld--Langer correction
associated with the singular inverse square term,  and the dependence on the magnetic field appears  (modulo $\mathcal O (h^2)$) through the shift in the quantization rule.
\end{remark}

\subsubsection{Heuristics}

We fix the notation as
\begin{equation}\label{eq:sc-notation}
\mu=\frac{\nu}{2},\quad \rho=4kh\,,\quad E(\rho):=\lambda_k(h).
\end{equation}
The inverse-square potential in $T_h$ is a lower-order term of order
$h^2$, so it does not contribute to the principal symbol of the
operator.
Consequently, the
underlying model operator is simply the truncated harmonic oscillator
\begin{equation}
-h^2\frac{d^2}{dx^2}+x^2
\end{equation}
on the interval \((0,1)\) with a Dirichlet condition at \(x=1\). Its  associated classical action is
\begin{equation}\label{eq:action}
\action(E)=\int_0^1\sqrt{(E-x^2)_+}\,dx\,.
\end{equation}
Forgetting a possible difficulty related to the singularity at the origin, the
singular inverse-square potential only contributes to the
subprincipal correction and is therefore responsible for the second
term in the asymptotic expansion of Theorem~\ref{thm-oscillatory-regime}, that is
\[E(\rho)=E_0(\rho)+\text{subleading terms}.\]

\mn
The condition in \eqref{eq:sc} ensures that the whole interval \((0,1)\) is classically allowed for this
principal Hamiltonian. 
Formally, the leading term \(E_0(\rho)\)  is characterized by the formal
Bohr--Sommerfeld quantization rule \eqref{eq:Bohr-Sommerfeld}, which reads as
\[
\action(E_0(\rho))
=
\frac{\pi\rho}{4}\,.\]

\mn Recall that the eigenvalues of $T_h$ are given by
\begin{equation}\label{lambda-kappa}
\lambda_k(h)=4h \kappa_k\,,
\end{equation}
where $\kappa_k=\kappa_k(h)$ is determined by the spectral equation
\begin{equation}\label{spectral-kappa}
M_{\kappa_k,\mu}(\xi)=0\,,\quad \xi=1/h\gg 1\,.
\end{equation}

\subsubsection{Gabutti--Gatteschi's asymptotic description of the
positive Whittaker zeros}

The starting point of our analysis is the asymptotic description of the
positive zeros of the Whittaker function obtained by
Gabutti--Gatteschi \cite[Theorems~2.1 and~2.3]{GG2001}, which is itself
based on the uniform asymptotic expansions of Dunster
\cite{Dunster}.

\mn
Throughout this subsection, $\kappa$ denotes the large asymptotic
parameter, whereas $\mu$ is fixed. We introduce the scaled variable

\begin{equation}\label{scaled-variable}
X=\frac{\xi}{\kappa}\,,
\end{equation}
together with
\begin{equation}\label{gamma-def}
\gamma=\frac{2\mu}{\kappa}\,,
\end{equation}
and define
\begin{equation}\label{turning-points}
\xi_1=2-\sqrt{4-\gamma^2}\,,
\qquad
\xi_2=2+\sqrt{4-\gamma^2}\,,
\end{equation}
as well as
\begin{equation}\label{R-def}
R=\sqrt{(X-\xi_1)(\xi_2-X)}\,.
\end{equation}
Furthermore, let $\zeta=\zeta(X)$ be defined implicitly by
\begin{equation}\label{GG-zeta-equation}
\begin{aligned}
\sqrt{\zeta-\gamma^2}
-\gamma
\arctan\left(
\frac{\sqrt{\zeta-\gamma^2}}{\gamma}
\right)
={}&
\frac12R
-\frac{\gamma}{2}
\arctan\left(
\frac{2X-\gamma^2}{\gamma R}
\right)
\\
&
-\arctan\left(
\frac{2-X}{R}
\right)
+
\frac{\pi}{2}
\left(
1-\frac{\gamma}{2}
\right)\,.
\end{aligned}
\end{equation}

\mn
Finally, for every positive integer \(k\,\), define

\begin{equation}\label{zeta-k}
\zeta_k=
\left(
\frac{j_{2\mu,k}}{\kappa}
\right)^2\,,
\end{equation}
where \(j_{\nu,k}\) denotes the \(k\)-th positive zero of the Bessel
function \(J_\nu\)\,.

\mn
Theorem~2.1 of \cite{GG2001} shows that, if \(X_k\) is obtained by
inverting \eqref{GG-zeta-equation} with
\(\zeta=\zeta_k\,\), then the \(k\)-th positive zero
\(m_k\) of \(M_{\kappa,\mu}\) satisfies

\begin{equation}\label{GG-leading}
m_k=\kappa X_k+\mathcal O(\kappa^{-1})\,.
\end{equation}
More precisely, for every fixed \(q\in(\xi_1,\xi_2)\)\,, the estimate
\eqref{GG-leading} holds uniformly for all zeros satisfying
\(m_k\le q\kappa\).

\mn
Define
\begin{equation}\label{C0-def}
C_0(\zeta,\gamma,X)
=
\frac14\frac1{\sqrt{\zeta-\gamma^2}}
+
\frac5{12}
\frac{\gamma^2}{(\zeta-\gamma^2)^{3/2}}
+
\frac16
\frac{2X^3+3(\gamma^2-8)X^2+24X-2\gamma^2(\gamma^2-2)}
{R^3(\gamma^2-4)}\,,
\end{equation}
with \(R\) given by \eqref{R-def}.\\
Then Theorem~2.3 of \cite{GG2001} yields the refined asymptotic expansion

\begin{equation}\label{GG-zero}
m_k
=
\kappa X_k
-
\frac{X_k}{\kappa R_k}
\,C_0(\zeta_k,\gamma,X_k)
+
\mathcal O(\kappa^{-3})\,,
\end{equation}
where \(R_k\) is defined by \eqref{R-def} with \(X\) replaced by \(X_k\).  More precisely, for every fixed \(q\in(\xi_1,\xi_2)\,\), the expansion
\eqref{GG-zero} holds uniformly for all positive zeros satisfying
\(m_k\le q\kappa\)\,.

\begin{remark}
The implicit relation \eqref{GG-zeta-equation} admits a natural
semiclassical interpretation. It can be written as the equality
\begin{equation}
S_W(X,\gamma)=S_B(\zeta,\gamma)\,,
\end{equation}
where \(S_W\) and \(S_B\) denote respectively the classical actions
associated with the Whittaker equation and with its Bessel model. This identity is the key ingredient in Dunster's Liouville
transformation and may be viewed as a Bohr--Sommerfeld--Langer
quantization condition. For the convenience of the reader, its
derivation and geometric interpretation are recalled in
Appendix \ref{AppA}.
\end{remark}

\subsubsection{Inversion strategy}
 The Gabutti--Gatteschi asymptotics describe, for large \(\kappa\,\), the
\(k\)-th positive zero \(m_k\) (\(k=1,2,\ldots\)) of the Whittaker function
\(M_{\kappa,\mu}\) with respect to its argument.

\mn
In light of \eqref{lambda-kappa} and \eqref{spectral-kappa}, we would like to analyze, as $\xi\to+\infty\,$, the spectral condition
\[
M_{\kappa_k,\mu}(\xi)=0\,.
\]
Thus, for \(\kappa=\kappa_k\)\,, the parameter \(\xi\) is a positive zero
of the Whittaker function \(M_{\kappa,\mu}\). We prove below that this zero coincides with the $k$-th positive zero in the
Gabutti--Gatteschi ordering.

\begin{lemma}\label{lemma-index}
For every \(k\ge 1\), the parameter \(\xi\) is the
\(k\)-th positive zero \(m_k\) of the Whittaker function
\(M_{\kappa_k,\mu}\)\,.
\end{lemma}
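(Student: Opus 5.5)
We prove the lemma by counting interior zeros of the $k$-th eigenfunction via Sturm--Liouville oscillation theory. The Whittaker function with $\kappa=\kappa_k$ is the radial eigenfunction written in the variable $z=x^2/h$.

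\emph{Step 1: pass between the two variables.} Let $\kappa=\kappa_k=\lambda_k(h)/(4h)$ and $\mu=\nu/2$. By \eqref{regular-solution} and \eqref{Kummer-Whittaker}, the function
\[
u_k(x)=x^{-1/2}\,M_{\kappa,\mu}\bigl(x^2/h\bigr)
\]
is, up to a nonzero multiplicative constant $h^{-\mu-1/2}$, the eigenfunction of $T_h$ associated with $\lambda_k(h)$. It is regular at $x=0$ and vanishes at $x=1$, by \eqref{Whittaker-spectral}. The map $x\mapsto z=x^2/h$ is an increasing bijection from $(0,1]$ onto $(0,\xi]$, with $\xi=1/h$, and the factor $x^{-1/2}$ never vanishes. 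Hence the zeros of $u_k$ in $(0,1]$ correspond bijectively, preserving order, to the zeros of $M_{\kappa,\mu}$ in $(0,\xi]$.

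\emph{Step 2: oscillation count.} The operator $T_h$ is a Sturm--Liouville operator on $(0,1)$. The endpoint $x=1$ is regular. The endpoint $x=0$ is limit-point or limit-circle depending on $\nu$, and the Friedrichs extension selects the principal solution $x^{\nu+1/2}(1+o(1))$ there. In this setting the Sturm oscillation theorem holds: the eigenfunction for $\lambda_k(h)$ has exactly $k-1$ zeros in the open interval $(0,1)$. When $\nu<1$ one should cite the oscillation theorem for singular endpoints with Friedrichs/principal boundary condition (e.g.\ Weidmann or Zettl). Alternatively, one can argue directly with a Prüfer angle started from the principal solution, whose angle is monotone in $\lambda$.

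Combining with Step 1, $M_{\kappa,\mu}$ has exactly $k-1$ zeros in $(0,\xi)$ and a zero at $\xi$. These zeros are simple, since a solution of a second-order linear ODE that vanishes together with its derivative is identically zero. Therefore $\xi$ is the $k$-th positive zero $m_k$ of $M_{\kappa_k,\mu}$.

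\emph{Main obstacle.} The only delicate point is justifying the oscillation count at the singular endpoint $x=0$ uniformly in $\nu\ge0$, including $0\le\nu<1$, where both solutions are in $L^2$ near $0$. We handle it by identifying the Friedrichs domain with the principal-solution boundary condition, which is exactly the regular behaviour $x^{\nu+1/2}$ in \eqref{regular-solution}. We also check that the Gabutti--Gatteschi ordering counts positive zeros of $M_{\kappa,\mu}$ in increasing order starting from $z=0^+$. This holds because $M_{\kappa,\mu}(z)\sim z^{\mu+1/2}$ near $0$, so no zero accumulates at the origin.
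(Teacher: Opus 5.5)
Your argument is correct and is essentially the paper's proof: the increasing change of variables $z=x^2/h$ sends zeros of the eigenfunction on $(0,1]$ to zeros of $M_{\kappa_k,\mu}$ on $(0,\xi]$, Sturm's oscillation theorem gives exactly $k-1$ interior zeros, and the Dirichlet condition at $x=1$ makes $\xi$ the $k$-th zero. Two refinements go beyond the paper: your justification of the oscillation count at the singular endpoint (the Friedrichs extension selects the principal solution), and your prefactor $x^{-1/2}$, which is the correct Whittaker form — the paper's display puts the Kummer prefactor $x^{\nu+1/2}e^{-\xi x^2/2}$ in front of $M_{\kappa_k,\mu}$, which is harmless only because a nonvanishing prefactor does not change the zero set.
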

\begin{proof}
The regular solution of the singular harmonic oscillator is, up to a
non-zero multiplicative constant,
\begin{equation}\label{regular-solution-bis}
u_k(x)
=
x^{\nu+\frac12}e^{-\xi x^2/2}
M_{\kappa_k,\mu}(\xi x^2)\,.
\end{equation}
By \eqref{regular-solution}, the change of variable
$
z=\xi x^2
$
is strictly increasing from \((0,1)\) onto \((0,\xi)\), the zeros of
\(u_k\) in \((0,1)\) are in one-to-one correspondence with the positive
zeros of \(M_{\kappa_k,\mu}\) in \((0,\xi)\,\). By the Sturm oscillation
theorem, the \(k\)-th eigenfunction has exactly \(k-1\) zeros in
\((0,1)\,\). Since \(u_k(1)=0\,\), it follows that \(\xi\) is the
\(k\)-th positive zero of \(M_{\kappa_k,\mu}\)\,.
\end{proof}

\mn 
Consequently, the asymptotic expansion of the eigenvalues reduces to
determining the corresponding value of \(\kappa_k\). Since the spectral
condition is equivalent to
\begin{equation}
m_k(\kappa_k,\mu)=\xi\,,
\end{equation}
we invert the two-term asymptotic expansion of the Whittaker zero
\(m_k(\kappa,\mu)\) obtained by Gabutti and Gatteschi. In this expansion,
the leading term is determined by the implicit action relation
\eqref{GG-zeta-equation}, with
\begin{equation}\label{zetak}
\zeta_k=\left(\frac{j_{2\mu,k}}{\kappa}\right)^2\,.
\end{equation}
Substituting the McMahon expansion of the Bessel zero \(j_{2\mu,k}\) and
then inverting the resulting relation yields the desired expansion of
\(\kappa_k\), and hence of \(\lambda_k(h)\,\), in light of \eqref{lambda-kappa} and \eqref{spectral-kappa}. This formulation keeps the
classical action \(\action(E)\) visible while making clear where the
Gabutti--Gatteschi correction enters.

\subsubsection{Asymptotic inversion of the Whittaker zero expansion}\label{sss3.1.4}

We now derive the asymptotic expansion of the eigenvalues by inverting
the two-term asymptotic expansion of the Whittaker zeros established by
Gabutti and Gatteschi. Recall that

\begin{equation}\label{spectral-kappabis}
\kappa_k(h)
=
\frac{\lambda_k(h)}{4h}\,.
\end{equation}
By Lemma~\ref{lemma-index}, and writing $\kappa_k=\kappa_k(h)$ for short, we have
\begin{equation}\label{spectral-zero}
m_k(\kappa_k,\mu)=\xi\,,
\end{equation}
where \(m_k(\kappa,\mu)\) denotes the \(k\)-th positive zero of the
Whittaker function with respect to its argument. Since 
\[\lambda_k(h)\geq 4kh+2(\nu-1)h\geq \rho_0+\mathcal O(h)>1\,.\]
We can then choose a fixed $q\in(0,4)$ such that, for $h$ sufficiently
small,
\[
\xi_1<\frac{\xi}{\kappa_k}<q<\xi_2\,.
\]
We can then select a fixed $q\in(0,4)$ such that $\xi=1/h<q\kappa$, and therefore apply the refined asymptotic expansion \eqref{GG-zero} with
\(\kappa=\kappa_k\); we obtain
\begin{equation}\label{spectral-mk}
\xi
=
\kappa_kX_k
-
\frac{X_k}{\kappa_kR_k}
\,C_0(\zeta_k,\gamma_k,X_k)
+
\mathcal O(\kappa_k^{-3})\,,
\end{equation}
where $\zeta_k$ is given by (\ref{zetak}), $ \gamma_k
=
\frac{2\mu}{\kappa_k}$ and  \(X_k\) is determined by the implicit relation
\eqref{GG-zeta-equation}. 

\mn 
We now introduce the scaled energy
\begin{equation}\label{scaled-energy}
E=\frac{4}{X_k}.
\end{equation}
Since
\begin{equation}\label{kappa-energy}
 X_k=\frac{\xi}{\kappa_k}\,,
\qquad
\kappa_k=\frac{\xi E}{4}\,,
\end{equation}
it follows from \eqref{spectral-kappabis} that

\begin{equation}\label{spectral-E}
\lambda_k(h) =E\,.
\end{equation}

Consequently, the asymptotic expansion of the eigenvalues reduces to
determining the corresponding asymptotic expansion of the scaled energy
\(E\)\,.

\mn
The next step consists in expanding the implicit relation
\eqref{GG-zeta-equation} in the regime
$
\xi\to+\infty\,,
$
with
\[
\rho=\frac{4k}{\xi}\in[\rho_0,\rho_1]\,.
\]
To this end, we first derive an asymptotic expansion of the quantity
\(\zeta_k\) appearing in \eqref{zetak} by using the classical
McMahon expansion of the Bessel zeros, see \cite[\S10.21, Eq.~(10.21.19)]{DLMF}:
\begin{equation}
j_{2\mu,k}
=
\beta_k
-
\frac{16\mu^2-1}{8\beta_k}
+
\mathcal O(\beta_k^{-3})\,,
\qquad
\beta_k=\pi\left(k+\mu-\frac14\right)\,.
\end{equation}
Using \eqref{zetak}, we obtain
\begin{equation}\label{zeta-McMahon}
\zeta_k
=
\left(
\frac{\beta_k}{\kappa_k}
\right)^2
-
\frac{16\mu^2-1}{4\kappa_k^2}
+
\mathcal O(\beta_k^{-2}\kappa_k^{-2})\,.
\end{equation}
Moreover,
\begin{equation}\label{beta-over-kappa}
\frac{\beta_k}{\kappa_k}
=
\frac{\pi\rho}{E}
+
\frac{4\pi}{\xi E}
\left(\mu-\frac14\right)\,.
\end{equation}
Substituting \eqref{beta-over-kappa} into
\eqref{zeta-McMahon}, we get
\begin{equation}\label{zeta-expansion-E}
\zeta_k
=
\left(
\frac{\pi\rho}{E}
\right)^2
+
\frac{8\pi^2\rho}{\xi E^2}
\left(\mu-\frac14\right)
+
\mathcal O(\xi^{-2})\,.
\end{equation}
Using the identity
$
X=\frac4E\,,
$
the implicit relation \eqref{GG-zeta-equation} is now expressed in terms
of the scaled energy \(E\). Moreover, since
\[
\kappa_k=\frac{\xi E}{4}\,,
\]
we have
\begin{equation}\label{gamma-energy}
\gamma_k
=
\frac{2\mu}{\kappa_k}
=
\frac{8\mu}{\xi E}\,.
\end{equation}
We shall now expand both sides of \eqref{GG-zeta-equation} as
\(\xi\to+\infty\), uniformly for 
\[
\rho=\frac{4k}{\xi}\in[\rho_0,\rho_1]\,.
\]
In what follows, the expansions are first performed for \(E\) in an
arbitrary compact subset of \((1,+\infty)\). The leading equation obtained
below will show that the relevant solutions \(E\) indeed remain in such a compact subset.

\mn 
The right-hand side of
\eqref{GG-zeta-equation} becomes
\begin{equation}\label{SW-energy-expansion}
\begin{aligned}
S_W\left(\frac4E,\gamma_k\right)
&=
\frac12\sqrt{\frac4E\left(4-\frac4E\right)}
+
2\arcsin\left(\frac1{\sqrt E}\right)
-
\frac{4\pi\mu}{\xi E}
+
\mathcal O(\xi^{-2})
\\
&=
\frac{2}{E}\sqrt{E-1}
+
2\arcsin(E^{-1/2})
-
\frac{4\pi\mu}{\xi E}
+
\mathcal O(\xi^{-2})\,.
\end{aligned}
\end{equation}
We next expand the Bessel action. Recall that
\[
S_B(\zeta,\gamma)
=
\sqrt{\zeta-\gamma^2}
-
\gamma\arctan
\left(
\frac{\sqrt{\zeta-\gamma^2}}{\gamma}
\right)\,.
\]
Using \eqref{zeta-expansion-E}, we first get
\begin{equation}\label{sqrt-zeta-expansion-E}
\sqrt{\zeta_k}
=
\frac{\pi\rho}{E}
+
\frac{4\pi}{\xi E}
\left(\mu-\frac14\right)
+
\mathcal O(\xi^{-2})\,.
\end{equation}
Moreover, 
we have
\[
\sqrt{\zeta_k-\gamma_k^2}
=
\sqrt{\zeta_k}
+
\mathcal O(\xi^{-2})\,,
\]
and
\[
\arctan
\left(
\frac{\sqrt{\zeta_k-\gamma_k^2}}{\gamma_k}
\right)
=
\frac{\pi}{2}
+
\mathcal O(\xi^{-1})\,.
\]
Therefore,
\begin{equation}\label{SB-energy-expansion}
\begin{aligned}
S_B(\zeta_k,\gamma_k)
&=
\sqrt{\zeta_k}
-
\frac{\pi}{2}\gamma_k
+
\mathcal O(\xi^{-2})
\\
&=
\frac{\pi\rho}{E}
+
\frac{4\pi}{\xi E}
\left(\mu-\frac14\right)
-
\frac{4\pi\mu}{\xi E}
+
\mathcal O(\xi^{-2})
\\
&=
\frac{\pi\rho}{E}
-
\frac{\pi}{\xi E}
+
\mathcal O(\xi^{-2}).
\end{aligned}
\end{equation}
Identifying \eqref{SW-energy-expansion} and
\eqref{SB-energy-expansion}, we obtain
\[
\frac{2}{E}\sqrt{E-1}
+
2\arcsin(E^{-1/2})
-
\frac{4\pi\mu}{\xi E}
=
\frac{\pi\rho}{E}
-
\frac{\pi}{\xi E}
+
\mathcal O(\xi^{-2})\,.
\]
Multiplying by \(E/2\), we get
\begin{equation}\label{pre-Phi}
\sqrt{E-1}
+
E\arcsin(E^{-1/2})
=
\frac{\pi\rho}{2}
+
\frac{\pi}{\xi}
\left(
2\mu-\frac12
\right)
+
\mathcal O(\xi^{-2})\,.
\end{equation}
It is therefore natural to introduce the function
\begin{equation}\label{Phi-def}
\action(E)
=
\frac12\sqrt{E-1}
+
\frac{E}{2}\arcsin(E^{-1/2})\,.
\end{equation}
Equation \eqref{pre-Phi} then becomes
\begin{equation}\label{Phi-expansion}
\action(E)
=
\frac{\pi\rho}{4}
+
\frac{\pi}{\xi}
\left(
\mu-\frac14
\right)
+
\mathcal O(\xi^{-2}).
\end{equation}
We now invert the relation \eqref{Phi-expansion}. One has
\begin{equation}\label{Phi-prime}
\action'(E)
=
\frac12\arcsin(E^{-1/2})
>0,
\qquad E>1\,.
\end{equation}
Thus, \(I\) is a smooth increasing diffeomorphism from
\((1,+\infty)\) onto \( (\frac{\pi}{4},+\infty) \). Hence, for every
\[
\rho\in[\rho_0,\rho_1],
\qquad 1<\rho_0\le \rho_1<+\infty\,,
\]
there exists a unique \(E_0(\rho)>1\) such that
\begin{equation}\label{E0-def}
\action(E_0(\rho))
=
\frac{\pi\rho}{4}\,.
\end{equation}
Since \(E_0(\rho)\) ranges over a compact subset of \((1,+\infty)\) as
\(\rho\) varies in \([\rho_0,\rho_1]\)\,, and since
\[
\action'(E_0(\rho))\neq 0\,,
\]
the implicit function theorem applies uniformly with respect to
\(\rho\in[\rho_0,\rho_1]\)\,. It follows from \eqref{Phi-expansion} that
\begin{equation}\label{E-expansion}
E
=
E_0(\rho)
+
\frac{E_1^{\sharp}(\rho)}{\xi}
+
\mathcal O(\xi^{-2})\,,
\end{equation}
where
\begin{equation}\label{E1-sharp-def}
E_1^{\sharp}(\rho)
=
\frac{\pi\left(\mu-\frac14\right)}
{\action'(E_0(\rho))}
=
\frac{2\pi\left(\mu-\frac14\right)}
{\arcsin(E_0(\rho)^{-1/2})}\,.
\end{equation}
We now return to the correction term in the Gabutti--Gatteschi expansion
\eqref{spectral-mk}. From \eqref{E-expansion}, we know that \(E\) remains
in a fixed compact subset of \((1,+\infty)\), uniformly for
\(\rho\in[\rho_0,\rho_1]\,\). Hence
\[
X_k=\frac4E
\]
remains in a fixed compact subset of \((0,4)\). Hence Theorems~2.1 and~2.3 of \cite{GG2001} apply uniformly with respect to
\(\rho\in[\rho_0,\rho_1]\)\,, and
\[
\kappa_k=\frac{\xi E}{4}=\mathcal O(\xi)\,,
\qquad
\kappa_k^{-1}=\mathcal O(\xi^{-1})\,.
\]
Since $E$ remains in a compact subset of $(1,+\infty)$, the quantities
$X_k$ and $\zeta_k$ remain uniformly bounded away from the relevant
singular endpoints, while $\gamma_k=\mathcal O(\xi^{-1})$. In particular,
\[
R_k=R(X_k)\geq c>0\,,
\]
for some constant $c>0\,$, and
\[
C_0(\zeta_k,\gamma_k,X_k)=\mathcal O(1)\,.
\]
Consequently,
\begin{equation}\label{GG-correction-size}
\frac{X_k}{\kappa_kR_k}
\,C_0(\zeta_k,\gamma_k,X_k)
=
\mathcal O(\xi^{-1})\,.
\end{equation}
We now explain why the correction term in \eqref{spectral-mk} does not
modify the coefficients already obtained in \eqref{E-expansion}. By
\eqref{GG-correction-size}, equation \eqref{spectral-mk} becomes
\begin{equation}\label{kappaX-corrected}
\kappa_kX_k
=
\xi
+
\mathcal O(\xi^{-1}).
\end{equation}
Since \(\kappa_k\asymp\xi\,\), a division by \(\kappa_k\) yields
\begin{equation}\label{X-corrected}
X_k
=
\frac{\xi}{\kappa_k}
+
\mathcal O(\xi^{-2})\,.
\end{equation}
Since \(X_k\) remains in a compact subset of \((0,4)\), the map
\[
X\longmapsto \frac4X
\]
is smooth in a neighborhood of \(X_k\). Hence the estimate
\eqref{X-corrected} implies that the corresponding correction to the
scaled energy is also of order \(\mathcal O(\xi^{-2})\). Consequently, the correction term in
\eqref{spectral-mk} does not affect the coefficients
\(E_0(\rho)\) and \(E_1^\sharp(\rho)\) in
\eqref{E-expansion}.
\mn
The previous analysis immediately yields the  result of Theorem~\ref{thm-oscillatory-regime}.

\section{The transition regime}\label{sec:transition}

The Bohr--Sommerfeld--Langer quantization derived in the previous
section is valid uniformly in the regime
\[
1<\rho_0\leq \rho:=4kh\leq\rho_1\,.
\]
As $\rho\downarrow1$, the classical turning point reaches the boundary
of the interval. Consequently, the standard semiclassical expansion
ceases to be uniform and must be replaced by a uniform Airy
approximation. 
	We first recall the relevant Airy asymptotics and derive
	a localization result for the eigenvalues in an $h^\delta$-neighborhood
	of the critical threshold, with $0<\delta<2/3$. We then specialize to
	the critical scale $\delta=2/3$ and obtain an explicit transition asymptotic expansion in the window
	$|4kh-1|=\mathcal O(h^{2/3})$.

\subsection{Uniform Airy asymptotics}

Throughout this section we use the notation
\begin{equation}\label{eq:kappa-mu}
\kappa=\frac b2-a\,,
\qquad
\mu=\frac{b-1}{2}\,,\quad \xi=1/h\,,
\end{equation}
so that
\begin{equation}\label{Whittaker}
M(a,b,\xi)
=
e^{\xi/2}\xi^{-b/2}
M_{\kappa,\mu}(\xi)\,,
\end{equation}
see \cite[Eq.~13.14.4]{DLMF}. We assume throughout that
\[
\kappa\to+\infty\,,
\]
while \(\mu\) remains fixed. Under these assumptions, we are
precisely in the framework of the uniform Airy asymptotic theory of
Dunster, reproduced in \cite[\S13.21(iii)]{DLMF}.

\mn
In the transition regime, the Airy variable changes sign at the outer
turning point
\[
\xi=2\kappa+2\sqrt{\kappa^2-\mu^2}\,.
\]
For definiteness, we write below the expression corresponding to
\[
\xi>2\kappa+2\sqrt{\kappa^2-\mu^2}\,,
\]
for which $\widehat\zeta$, introduced in \eqref{zetahat} below, satisfies $\widehat\zeta>0$. The case on the other side of the turning
point is treated similarly, using the corresponding expression for
$\widehat\zeta$ in \cite[Eq.~(13.21.20)]{DLMF}.

We therefore introduce the Airy variable
\begin{equation}\label{zetahat}
\widehat{\zeta}
=
\left(
\frac{3}{2\kappa}
\left(
\frac{X}{2}
+
\mu
\log\!\left(
\frac{\xi\sqrt{\kappa^2-\mu^2}}
{\kappa\xi-2\mu^2-\mu X}
\right)
+
\kappa
\log\!\left(
\frac{2\sqrt{\kappa^2-\mu^2}}
{\xi-2\kappa+X}
\right)
\right)
\right)^{2/3}\,,
\end{equation}
where
\[
X=\sqrt{\xi^2-4\kappa\xi+4\mu^2}\,.
\]
Furthermore, define
\begin{align*}
\widehat c(\kappa,\mu)
&=
\sqrt{2\pi}\,
\kappa^{1/6}
\left(
\frac{\kappa-\mu}{\kappa+\mu}
\right)^{\mu/2}
\left(
\frac{e}{\sqrt{\kappa^2-\mu^2}}
\right)^\kappa\,,
\\
\widehat\Psi(\kappa,\mu,\xi)
&=
\left(
\frac{\widehat\zeta}
{\xi^2-4\kappa\xi+4\mu^2}
\right)^{1/4}
(2\xi)^{1/2}\,.
\end{align*}
Then the uniform Airy expansion of Dunster takes the form
\cite[Eq.~13.21.22]{DLMF}
\begin{equation}\label{AiryM}
\begin{aligned}
M_{\kappa,\mu}(\xi)
={}&
\frac{1}{2\pi}
\Gamma(2\mu+1)
\Gamma\!\left(\kappa-\mu+\frac12\right)
\widehat c(\kappa,\mu)
\widehat\Psi(\kappa,\mu,\xi)
\\
&\times
\Bigl[
\sin\!\bigl(\pi(\kappa-\mu)\bigr)
\operatorname{Ai}\!\left(\kappa^{2/3}\widehat\zeta\right)
+
\cos\!\bigl(\pi(\kappa-\mu)\bigr)
\operatorname{Bi}\!\left(\kappa^{2/3}\widehat\zeta\right)
\\
&\hspace{2cm}
+\,
\operatorname{envBi}
\!\left(\kappa^{2/3}\widehat\zeta\right)
\mathcal O(\kappa^{-1})
\Bigr]\,.
\end{aligned}
\end{equation}

Here and in the following, $\operatorname{env}\Bi$ denotes the Airy
envelope function of Olver (see \cite[\S2.8(iii)]{DLMF}). We recall its definition for convenience. If $c$ denotes the real root of smallest absolute value of
$\Ai(c)=\Bi(c)$, then
\[\operatorname{env}\Bi(x)
=
\bigl(\Ai(x)^2+\Bi(x)^2\bigr)^{1/2},
\qquad x\leq c\,,
\]
whereas
\[\operatorname{env}\Bi(x)
=
\sqrt{2}\,\Bi(x)\,,
\qquad x\geq c\,.
\]
%

\subsection{Uniform quantization in the transition regime}

We now investigate the transition regime for the zeros of the Kummer
function
\[
a\longmapsto M(a,b,\xi)\,.
\]
In this regime, the Airy argument remains bounded and the Airy
functions appearing in the uniform expansion \eqref{AiryM} must
therefore be kept in their exact form. 

\mn

Using the relation \eqref{Whittaker}, the uniform Airy expansion
\eqref{AiryM} becomes
\[\begin{aligned}
M(a,b,\xi)
={}&
\frac{e^{\xi/2}\xi^{-b/2}}{2\pi}
\Gamma(b)
\Gamma(1-a)
\widehat c(\kappa,\mu)
\widehat\Psi(\kappa,\mu,\xi)
\\
&\times
\Bigl[
\cos(\pi a)
\operatorname{Ai}\!\left(\kappa^{2/3}\widehat\zeta\right)
+
\sin(\pi a)
\operatorname{Bi}\!\left(\kappa^{2/3}\widehat\zeta\right)
\\
&\hspace{2cm}
+\,
R(\kappa,\mu,\xi)
\Bigr]\,.
\end{aligned}
\]
where
\begin{equation}\label{eq:Airy-remainder}
\bigl|R(\kappa,\mu,\xi)\bigr|
\leq
C\kappa^{-1}
\operatorname{env}\Bi
\!\left(\kappa^{2/3}\widehat\zeta\right),
\end{equation}
uniformly in the regime under consideration. Hence, the zeros of $a\mapsto M(a,b,\xi)$ satisfy the uniform
implicit quantization condition
\begin{equation}\label{ImplicitAiryZeros}
\cos(\pi a)
\operatorname{Ai}\!\left(\kappa^{2/3}\widehat\zeta\right)
+
\sin(\pi a)
\operatorname{Bi}\!\left(\kappa^{2/3}\widehat\zeta\right)
+
R(\kappa,\mu,\xi)
=
0\,,
\end{equation}
where $R(\kappa,\mu,\xi)$ satisfies
\eqref{eq:Airy-remainder}. 
Equation \eqref{ImplicitAiryZeros} provides a uniform quantization
condition for the zeros of the Kummer function throughout the Airy
transition regime. It forms the starting point for deriving various
transition asymptotics by considering appropriate scalings of the Airy
argument
$
\kappa^{2/3}\widehat\zeta\,.
$

\mn 

	We first establish rough eigenvalue bounds near the critical energy.
	Besides providing the localization needed below, the argument explains
	the distinguished role of the scale $h^{2/3}\,$.
	
\begin{proposition}\label{lem:rough-transition}
Let $0<\delta<2/3$ and $0<t_0<t_1<+\infty$. Assume that
\[
\rho=4kh=1+th^\delta,
\qquad t\in[t_0,t_1].
\]
Then, uniformly for $t\in[t_0,t_1]$,
\[
0\leq \lambda_k(h)-e_k(h)
=\mathcal O\bigl(h^{3\delta/2}\bigr).
\]
\end{proposition}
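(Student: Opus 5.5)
The lower bound $\lambda_k(h)\geq e_k(h)$ is the universal comparison \eqref{eq:1.3}, so only the upper bound $\lambda_k(h)-e_k(h)=\mathcal O(h^{3\delta/2})$ needs proof. I would not use a min-max argument here. The cut-off trial states of Proposition~\ref{prop:Vanlaere} fail, because for $\rho>1$ the functions $\Phi_{j,h}$ with $j$ close to $k$ are not small near $x=1$. Domain monotonicity combined with the scaling $\lambda_k^{(0,L)}(h)=L^2\lambda_k^{(0,1)}(h/L^2)$ also points in the wrong direction. Instead I would start from the uniform quantization condition \eqref{ImplicitAiryZeros}. Its consequence should be a Bohr--Sommerfeld--Langer relation $I(\lambda_k)=\pi h\,(k+\tfrac{\nu}{2}-\tfrac14)+(\text{small})$ that stays valid down to $\rho-1\sim h^{\delta}$. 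Expanding $I$ near $E=1$ then gives the claimed size.

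\emph{Step 1 (a priori localization).} I would first show that $\lambda_k=1+\mathcal O(h^\delta)$, with $\lambda_k-1\geq c\,h^\delta$. The lower side follows from $\lambda_k\geq e_k=1+th^\delta+\mathcal O(h)$. For the upper side I would use a crude bound, for instance monotonicity of $\lambda_k$ in $k$ together with Theorem~\ref{thm:supercritical-intro} applied at a fixed $\rho_0>1$, refined afterwards by continuity in the Airy condition. With $\kappa=\lambda/(4h)$ and $\xi=1/h$, we have $\xi/\kappa=4/\lambda<4$, so $\xi$ lies inside the outer turning point $2\kappa+2\sqrt{\kappa^2-\mu^2}$. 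The Airy argument is therefore negative, $z:=\kappa^{2/3}\widehat\zeta\asymp -h^{-2/3}(\lambda-1)$, and $|z|\gtrsim h^{\delta-2/3}\to\infty$ because $\delta<2/3$. This is where the restriction $\delta<2/3$ enters.

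\emph{Step 2 (oscillatory Airy asymptotics).} For $z\to-\infty$, the standard asymptotics of $\Ai$, $\Bi$ and the bound $\operatorname{env}\Bi(z)\asymp |z|^{-1/4}$ reduce \eqref{ImplicitAiryZeros} to
\[
\sin\Bigl(\tfrac23|z|^{3/2}+\tfrac{\pi}{4}+\pi a\Bigr)=\mathcal O\bigl(|z|^{-3/2}+\kappa^{-1}\bigr),
\]
where $a=-\frac{\lambda-2(1+\nu)h}{4h}$. Hence $\frac23|z|^{3/2}+\frac{\pi}{4}+\pi a=\pi m+\mathcal O(h^{1-3\delta/2})$ for some integer $m$. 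Next I would expand $\frac23\kappa|\widehat\zeta|^{3/2}$ using the formula on the other side of the turning point (the analogue of \eqref{zetahat} in \cite[Eq.~(13.21.20)]{DLMF}), with $\gamma=2\mu/\kappa=\mathcal O(h)$, exactly as in the computation of $S_W$ in Subsection~\ref{sss3.1.4}. Up to $\mathcal O(h)$, the resulting phase combines with $\pi a$ into $\xi I(\lambda)$ minus the Langer term $\pi(\mu-\tfrac14)$. This yields
\[
I(\lambda_k)=\pi h\Bigl(m+\tfrac{\nu}{2}-\tfrac14\Bigr)+\mathcal O(h^2)+\mathcal O\bigl(h^{2-3\delta/2}\bigr).
\]

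\emph{Step 3 (identifying $m=k$ and concluding).} To show $m=k$, I would count zeros as in Lemma~\ref{lemma-index}: the $k$'th eigenfunction has $k-1$ interior zeros. Each increase of $m$ by one moves $\lambda$ by about $4h$, and the gap bound of Theorem~\ref{thm:gap} fixes the labelling once one eigenvalue is matched. The natural anchor is the supercritical regime of Theorem~\ref{thm:supercritical-intro}, reached through continuity in $t$; matching there identifies the integer. With $m=k$, set $E=1+\varepsilon$, where $\varepsilon\asymp h^\delta$. Then
\[
I(1+\varepsilon)=\tfrac{\pi}{4}(1+\varepsilon)+\mathcal O(\varepsilon^{3/2}),
\]
since the terms $\tfrac12\sqrt\varepsilon$ and $-\tfrac12(1+\varepsilon)\sqrt\varepsilon$ cancel at leading order. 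The quantization relation then gives
\[
\lambda_k=4kh+2(\nu-1)h+\mathcal O(h^{3\delta/2})+\mathcal O(h)=e_k(h)+\mathcal O(h^{3\delta/2}),
\]
because $3\delta/2<1$. Uniformity in $t\in[t_0,t_1]$ is inherited from the uniformity of \eqref{eq:Airy-remainder}.

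\emph{Main obstacle.} I expect Steps 2 and 3 to be the hardest part. One must expand the Airy phase $\frac23\kappa|\widehat\zeta|^{3/2}$ uniformly as $\lambda\downarrow1$ at the rate $h^\delta$, keeping track of the $\mu$-logarithm and of the error $|z|^{-3/2}=\mathcal O(h^{1-3\delta/2})$. One must also fix the integer branch $m=k$ without circularity. If the direct phase expansion becomes unwieldy, I would instead expand only to first order in $\varepsilon$ and control the $\mathcal O(\varepsilon^{3/2})$ remainder through the explicit form of $I$.
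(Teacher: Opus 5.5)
\textbf{Comparison with the paper.} Your route differs from the paper's and can be completed, but the labelling step does not work as you describe it. The paper's proof is in fact a min--max argument, just not one built from trial states. It imposes an extra Dirichlet condition at $x=1$ on the half-line operator $T_h^\infty$, so the eigenvalues $\mu_j$ of $T^D_{h,(0,1)}\oplus T^D_{h,(1,\infty)}$ interlace as $e_j\le\mu_j\le e_{j+1}$. On $(1,\infty)$ the potential is $\ge 1+2(x-1)-C_\nu h^2$, so after $x=1+h^{2/3}y$ the exterior eigenvalues are $\ge 1+h^{2/3}\alpha_j-C_\nu h^2$ with $\alpha_j\sim(3\pi j)^{2/3}$. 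Hence at most $L(h)=\mathcal O(h^{3\delta/2-1})$ of them lie below $E_h=1+(t_1+1)h^\delta$. Since $e_{k+L+1}\le E_h$, this gives $\lambda_k\le\mu_{k+L}\le e_{k+L+1}=e_k+4(L+1)h=e_k+\mathcal O(h^{3\delta/2})$. No special functions and no branch identification are needed, and the paper then feeds this localization into the Airy analysis of Subsection~\ref{subsec:matching}. You reverse that order. You must therefore fix the branch independently, from the uniform expansion \eqref{ImplicitAiryZeros} together with Theorems~\ref{thm:supercritical-intro} and~\ref{thm:gap}.

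\textbf{Repairing Step~3.} ``Continuity in $t$'' has no meaning at fixed $h$. Theorem~\ref{thm:gap} also does not give $m=k$: it only shows that $j\mapsto m_j$ is strictly increasing, since two eigenvalues with the same $m$ would be $o(h)$ apart. It does not show that no integer is skipped. The following argument works.
\begin{enumerate}[(i)]
\item Fix $h$ and use \eqref{ImplicitAiryZeros} uniformly for all $\lambda\in[1+ch^\delta,\Lambda]$, with $\Lambda>E_0(\rho_0)$ fixed.
\item Pick $k'$ with $4k'h\in[\rho_0,\rho_1]$. Then $\lambda_k\le\lambda_{k'}<\Lambda$, and Theorem~\ref{thm:supercritical-intro} gives $m_{k'}=k'$.
\item Strict monotonicity yields $m_k\le m_{k'}-(k'-k)=k$.
\end{enumerate}
This one-sided bound is exactly what the upper bound requires. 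From $I(\lambda_k)\le\pi h(k+\frac\nu2-\frac14)+o(h)$ you first get $\lambda_k-1=\mathcal O(h^\delta)$, so your Step~1 localization is an output rather than an input. Then $I(1+\varepsilon)=\frac\pi4(1+\varepsilon)-\frac13\varepsilon^{3/2}+\mathcal O(\varepsilon^{5/2})$ gives $\lambda_k-e_k\le h+\mathcal O(h^{3\delta/2})$, and the lower bound is \eqref{eq:1.3}.

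An upper anchor is indispensable, because \eqref{eq:1.3} alone only gives $m_k\ge k-\mathcal O(h^{3\delta/2-1})$. This is the same count as the paper's $L(h)$.

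\textbf{What your route buys.} If you also establish $m_k=k$, you get the sharp threshold asymptotics \eqref{eq:supercritical-matching-final} directly, not just an $\mathcal O$-bound. For that you additionally need surjectivity, for instance a sign change of the bracket in \eqref{ImplicitAiryZeros} between consecutive quantized levels. This uses that the prefactor $e^{\xi/2}\xi^{-b/2}\Gamma(b)\Gamma(1-a)\,\widehat c\,\widehat\Psi$ is positive for $a<0$. The price is reliance on uniform Airy asymptotics across the whole window up to a fixed supercritical energy, whereas the paper's bracketing argument is short and elementary.
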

\begin{proof}
	By \eqref{eq:1.3}, we have
	\begin{equation}\label{eq:rough-lower-bound}
		e_k(h)\leq\lambda_k(h)\,.
	\end{equation}
	We introduce a Dirichlet decoupling at $x=1$ for the half-line
	operator $T_h^\infty$. The resulting operator is
	\[
	T_h^{\mathrm{dec}}
	=
	T_{h,(0,1)}^{D}\oplus T_{h,(1,\infty)}^{D}\,.
	\]
	The first component is precisely the operator on $(0,1)$ whose
	eigenvalues are $\lambda_j(h)\,$. We denote by
	\[
	\lambda_1^{\mathrm{ext}}(h)
	\leq
	\lambda_2^{\mathrm{ext}}(h)
	\leq\cdots
	\]
	the eigenvalues of the second component, which we shall refer to as
	the exterior component. Thus
	\[
	\sigma\bigl(T_h^{\mathrm{dec}}\bigr)
	=
	\{\lambda_j(h)\}_{j\geq1}
	\cup
	\{\lambda_j^{\mathrm{ext}}(h)\}_{j\geq1}\,,
	\]
	where the union is understood with multiplicities. Let
	\[
	\mu_1(h)\leq\mu_2(h)\leq\cdots
	\]
	denote this union rearranged in increasing order. Since the
	decoupling amounts to imposing one additional Dirichlet condition
	on the form domain, the min--max principle gives
	\begin{equation}\label{eq:rough-interlacing}
		e_j(h)\leq\mu_j(h)\leq e_{j+1}(h)\,.
	\end{equation}
	We next estimate the spectrum of the exterior component. On
	$(1,\infty)\,$, we have
	\[
	x^2+h^2\frac{\nu^2-\frac14}{x^2}
	\geq
	1+2(x-1)-C_\nu h^2\,.
	\]
	Hence, after the scaling $x=1+h^{2/3}y$, the exterior operator is
	bounded from below by
	\[
	1+h^{2/3}A-C_\nu h^2,
	\qquad
	A=-\frac{d^2}{dy^2}+2y\,,
	\]
	on $\mathbb R_+\,$, with Dirichlet boundary condition at $y=0$.
	Let
	\[
	0<\alpha_1<\alpha_2<\cdots
	\]
	denote the eigenvalues of $A$. By the min--max principle,
	\begin{equation}\label{eq:rough-exterior-eigenvalue-lower-bound}
		\lambda_j^{\mathrm{ext}}(h)
		\geq
		1+h^{2/3}\alpha_j-C_\nu h^2\,.
	\end{equation}
	The Airy asymptotics give
	\begin{equation}\label{eq:rough-Airy-growth}
		\alpha_j\sim(3\pi j)^{2/3}\,,
		\qquad j\to+\infty.
	\end{equation}
	By assumption,
	\begin{equation}\label{eq:rough-supercritical-scaling-proof}
		\rho=4kh=1+th^\delta\,.
	\end{equation}
	Set
	\[
	E_h:=1+(t_1+1)h^\delta\,,
	\]
	and let
	\[
	L(h)
	:=
	\#\bigl\{j\geq1:
	\lambda_j^{\mathrm{ext}}(h)\leq E_h\bigr\}\,.
	\]
	By \eqref{eq:rough-exterior-eigenvalue-lower-bound}, if
	$\lambda_j^{\mathrm{ext}}(h)\leq E_h$, then
	\[
	1+h^{2/3}\alpha_j-C_\nu h^2
	\leq
	1+(t_1+1)h^\delta\,.
	\]
	Hence
	\[
	\alpha_j
	\leq
	(t_1+1)h^{\delta-2/3}
	+C_\nu h^{4/3}\,.
	\]
	Using \eqref{eq:rough-Airy-growth}, we infer that
	\begin{equation}\label{eq:rough-L}
		L(h)
		=
		\mathcal O\bigl(h^{3\delta/2-1}\bigr)\,.
	\end{equation}
	We next show that the first $k+L(h)$ eigenvalues of the
	decoupled operator lie below $E_h\,$. Indeed,
	\[
	e_{k+L(h)+1}(h)
	=
	4kh+
	\bigl(4(L(h)+1)+2\nu-2\bigr)h\,.
	\]
	Using \eqref{eq:rough-supercritical-scaling-proof} and
	\eqref{eq:rough-L}, we obtain
	\[
	e_{k+L(h)+1}(h)
	=
	1+th^\delta
	+\mathcal O\bigl(h^{3\delta/2}\bigr)
	+\mathcal O(h)\,.
	\]
	Since $0<\delta<2/3\,$, both remainder terms are $o(h^\delta)$.
	Therefore, uniformly for $t\in[t_0,t_1]\,$,
	\[
	e_{k+L(h)+1}(h)
	\leq
	1+(t_1+1)h^\delta
	=E_h\,,
	\]
	for $h$ sufficiently small. By \eqref{eq:rough-interlacing},
	\[
	\mu_{k+L(h)}(h)
	\leq
	e_{k+L(h)+1}(h)
	\leq E_h\,.
	\]
	Thus the first $k+L(h)$ eigenvalues of the decoupled operator lie
	below $E_h$. By the definition of $L(h)\,$, at most $L(h)$ of them
	belong to the exterior spectrum. Hence at least $k$ of them belong
	to the interior spectrum, and therefore
	\[
	\lambda_k(h)
	\leq
	\mu_{k+L(h)}(h)
	\leq
	e_{k+L(h)+1}(h)\,.
	\]
	Together with \eqref{eq:rough-lower-bound}, this gives
	\[
	0\leq\lambda_k(h)-e_k(h)
	\leq
	e_{k+L(h)+1}(h)-e_k(h)
	=
	4(L(h)+1)h\,.
	\]
	Finally, using \eqref{eq:rough-L}, we obtain
	\[
	\lambda_k(h)-e_k(h)
	=
	\mathcal O\bigl(h^{3\delta/2}\bigr)\,,
	\]
	uniformly for $t\in[t_0,t_1]\,$.
\end{proof}


\mn
The preceding proof also makes the critical exponent transparent.
Indeed, the exterior eigenvalues which may enter a window of size
$h^\delta$ satisfy
\[
	h^{2/3}\alpha_j\lesssim h^\delta\,.
\]
Since $\alpha_j\sim(3\pi j)^{2/3}\,$, this implies
$j\lesssim h^{3\delta/2-1}\,$, and hence the number of such exterior
eigenvalues is of order at most $h^{3\delta/2-1}\,$. This number may grow when $\delta<2/3\,$, whereas
	it remains bounded when $\delta=2/3\,$. At the latter scale, the same
	argument gives the following localization.

	\begin{corollary}\label{cor:kappa-k-critical}
		Let $C>0$ and set
		\begin{equation}\label{eq:kappa-k-definition}
			\kappa_k(h):=\frac{\lambda_k(h)}{4h}\,.
		\end{equation}
		Then, uniformly for integers $k$ satisfying
		\begin{equation}\label{eq:critical-window}
			|4kh-1|\leq C\,h^{2/3}\,,
		\end{equation}
		we have
		\begin{equation}\label{eq:critical-kappa-localization}
			\kappa_k(h)=k+\mathcal O(1)\,,
			\mbox{ as } h\to 0^+\,.
		\end{equation}
	\end{corollary}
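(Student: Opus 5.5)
The plan is to repeat the decoupling argument of Proposition~\ref{lem:rough-transition}, now at the critical scale $\delta=2/3$, and to combine it with the universal lower bound \eqref{eq:1.3}. Since $\kappa_k(h)=\lambda_k(h)/(4h)$ and $e_k(h)/(4h)=k+\frac{\nu-1}{2}$, it suffices to show that
\[
0\leq \lambda_k(h)-e_k(h)\leq C' h
\]
uniformly for $k$ in the window \eqref{eq:critical-window}. The lower bound is immediate from \eqref{eq:1.3}, and it gives $\kappa_k(h)\geq k+\frac{\nu-1}{2}$.

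For the upper bound, set $E_h:=1+(C+1)h^{2/3}$ and use the decoupled operator $T_h^{\mathrm{dec}}=T^D_{h,(0,1)}\oplus T^D_{h,(1,\infty)}$. The interlacing \eqref{eq:rough-interlacing} holds, and so does the exterior lower bound \eqref{eq:rough-exterior-eigenvalue-lower-bound}:
\[
\lambda_j^{\mathrm{ext}}(h)\geq 1+h^{2/3}\alpha_j-C_\nu h^2 .
\]
Hence any exterior eigenvalue below $E_h$ satisfies $\alpha_j\leq C+1+C_\nu h^{4/3}\leq C+2$. Because $\alpha_j\to+\infty$, the count $L(h)=\#\{j:\lambda_j^{\mathrm{ext}}(h)\leq E_h\}$ is bounded by a constant $L_0=L_0(C)$, independent of $h$ and $k$. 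This boundedness is exactly what singles out the scale $h^{2/3}$.

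Next, I check that $e_{k+L_0+1}(h)\leq E_h$. Indeed,
\[
e_{k+L_0+1}(h)=4kh+\bigl(4(L_0+1)+2\nu-2\bigr)h\leq 1+Ch^{2/3}+\mathcal O(h),
\]
which is below $E_h$ for $h$ small, uniformly in $k$ in the window. By \eqref{eq:rough-interlacing}, the first $k+L(h)$ decoupled eigenvalues then lie below $E_h$. At most $L(h)$ of them are exterior eigenvalues, so
\[
\lambda_k(h)\leq \mu_{k+L(h)}(h)\leq e_{k+L_0+1}(h)=e_k(h)+4(L_0+1)h .
\]
Dividing by $4h$ gives $\kappa_k(h)\leq k+\frac{\nu-1}{2}+L_0+1$, so $\kappa_k(h)=k+\mathcal O(1)$ uniformly.

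The only delicate point is the counting step, where $L(h)$ has to be handled with care. If $L(h)<L_0$, the inequality still holds because $\mu_{k+L(h)}\leq \mu_{k+L_0}$ and $e_{k+L_0+1}\leq E_h$. So I will take the index $k+L_0$ throughout, which keeps the argument independent of the exact value of $L(h)$. I also need uniformity of the Hardy-type lower bound $x^2+h^2(\nu^2-\frac14)x^{-2}\geq 1+2(x-1)-C_\nu h^2$ on $(1,\infty)$, which is elementary. The argument does not use the Airy expansion \eqref{AiryM}.
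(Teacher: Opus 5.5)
Your proposal is correct and follows essentially the same route as the paper, whose proof simply reruns the Dirichlet-decoupling and counting argument of Proposition~\ref{lem:rough-transition} at $\delta=2/3$. At that scale only a bounded number of exterior eigenvalues enter the window, which gives $e_k(h)\leq\lambda_k(h)\leq e_k(h)+4(L+1)h$, and dividing by $4h$ concludes; you have just written out the details the paper leaves implicit (the choice of $E_h$, the bound $L(h)\leq L_0$, and the check $e_{k+L_0+1}(h)\leq E_h$).
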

	\begin{proof}
		At the scale \eqref{eq:critical-window}, the preceding counting
		argument shows that only a bounded number of exterior eigenvalues
		can occur in the relevant energy window. Hence, for some $L$
		independent of $h\,$,
		\[
		e_k(h)\leq\lambda_k(h)
			\leq e_{k+L+1}(h)
			=e_k(h)+4(L+1)h\,.
		\]
		Since
		\[	e_k(h)=(4k+2\nu-2)h\,,\]
		division by $4h$ gives
		\eqref{eq:critical-kappa-localization}.
	\end{proof}


\subsection{A particular transition scaling}

We now consider the classical Airy scaling near the turning point.
Recall that $\xi=h^{-1}\,$. In terms of the Whittaker parameter $\kappa\,$,
this scaling is
\[	\xi
	=
	4\kappa
	\left(
	1+\frac{t}{(2\kappa)^{2/3}}
	\right)\,,
	\qquad
	t=\mathcal O(1)\,.
\]
This is also the scaling used in the transition formula of
Magnus--Oberhettinger--Soni
\cite[p.~292, Case~3]{MOS1966}.  For $k$ satisfying
	$|4kh-1|\leq Th^{2/3}$, set
\[
\kappa_k(h):=\frac{\lambda_k(h)}{4h}\,,
\]
and define $t_k$ by
\begin{equation}\label{eq:def-tk}
	\xi
	=
	4\kappa_k(h)
	\left(
	1+\frac{t_k}{(2\kappa_k(h))^{2/3}}
	\right)\,.
\end{equation}
By Corollary~\ref{cor:kappa-k-critical},
\[
	\kappa_k(h)=k+\mathcal O(1)\,.
\]

Since $4kh=1+\mathcal O(h^{2/3})\,$, we have
$4h\kappa_k=1+\mathcal O(h^{2/3})\,$, and therefore
\[
t_k=\mathcal O(1)\,.
\]
Thus the transition scaling applies uniformly in the regime under consideration.
Expanding the corresponding Airy variable on either side of the
turning point under \eqref{eq:def-tk} gives, uniformly for bounded
$t_k\,$,
\begin{equation}\label{eq:Airy-variable-transition}
	\kappa_k^{2/3}\widehat\zeta
	=
	t_k+\mathcal O(\kappa_k^{-2/3})\,.
\end{equation}
Hence
\[
	\Ai(\kappa_k^{2/3}\widehat\zeta)
	=
	\Ai(t_k)+\mathcal O(\kappa_k^{-2/3})\,,
\]
and
\[
	\Bi(\kappa_k^{2/3}\widehat\zeta)
	=
	\Bi(t_k)+\mathcal O(\kappa_k^{-2/3})\,.
\]
Substituting these estimates into \eqref{ImplicitAiryZeros}, and using
\eqref{eq:Airy-remainder}, we obtain
\begin{equation}\label{eq:Airy-transition-quantization}
	\cos(\pi a)\Ai(t_k)
	+
	\sin(\pi a)\Bi(t_k)
	=
	\mathcal O(\kappa_k^{-2/3})\,.
\end{equation}
To write this condition in phase form, set
\[
	A(s):=\bigl(\Ai(s)^2+\Bi(s)^2\bigr)^{1/2}\,.
\]
Since the Airy functions $\Ai$ and $\Bi$ have no common zeros,
$A(s)>0$ for every $s\in\mathbb R\,$. We may therefore introduce a smooth phase $\theta(s)$ by
\begin{equation}\label{eq:Airy-phase}
	\Ai(s)=A(s)\sin\theta(s)\,,
	\qquad
	\Bi(s)=A(s)\cos\theta(s)\,,
\end{equation}
with the continuous determination such that $\theta(s)\to0$ as $s\to+\infty$.
Moreover, differentiating
\[
\tan\theta(s)=\frac{\Ai(s)}{\Bi(s)}
\]
and using the Wronskian identity
$W(\Ai,\Bi)=1/\pi$ (see \cite[Eq.~(9.2.7)]{DLMF}), we obtain
\begin{equation}\label{eq:Airy-phase-derivative}
	\theta'(s)
	=
	\frac{\Ai'(s)\Bi(s)-\Ai(s)\Bi'(s)}
	{\Ai(s)^2+\Bi(s)^2}
	=
	-\frac{1}{\pi\bigl(\Ai(s)^2+\Bi(s)^2\bigr)}
	<0\,.
\end{equation}
Thus $\theta$ is strictly decreasing.  Using the standard Airy asymptotics, we also have
\[
\theta(s)
=
\frac23(-s)^{3/2}+\frac{\pi}{4}+o(1),
\qquad s\to-\infty,
\]
whereas
\[
\theta(s)
=
\frac12\exp\left(-\frac43s^{3/2}\right)
\left(1+O(s^{-3/2})\right),
\qquad s\to+\infty.
\]
Then \eqref{eq:Airy-transition-quantization} becomes
\[
	\sin\bigl(\pi a+\theta(t_k)\bigr)
	=
	\mathcal O(\kappa_k^{-2/3})\,.
\]
Thus,
\[
	a=m-\frac{\theta(t_k)}{\pi}
	+\mathcal O(\kappa_k^{-2/3})\,,
	\mbox{ for } m\in\mathbb Z\,.
\]
On the subcritical side, the $k$-th zero approaches $1-k$.
Hence $m=1-k\,$, and consequently
\begin{equation}\label{eq:transition-tk}
	a_k(\xi)
	=
	1-k-\frac{\theta(t_k)}{\pi}
	+\mathcal O(\kappa_k^{-2/3})\,.
\end{equation}
Since $\kappa_k^{-2/3}=\mathcal O(h^{2/3})\,$, we finally obtain
\begin{equation}\label{eq:transition-ak}
	a_k(\xi)
	=
	1-k-\frac{\theta(t_k)}{\pi}
	+\mathcal O(h^{2/3})\,.
\end{equation}
By Sturm's oscillation theorem, the zero $a_k(\xi)$ in
\eqref{eq:transition-ak} corresponds to the $k$-th eigenvalue
of the singular harmonic oscillator. We can now state the resulting
transition asymptotics directly in terms of $k$ and $h$.

\begin{proposition}\label{prop:critical}
	For every $T>0\,$ and for integers $k$ satisfying
	\begin{equation}\label{eq:critical-window1}
		|4kh-1|\leq T h^{2/3}\,,
	\end{equation}
	set
	\begin{equation}\label{eq:tau-k}
		\tau_k(h)
		:=
		\frac{1-4kh}{2^{2/3}h^{2/3}}\,.
	\end{equation}
	Then,  uniformly for these $k$'s,  we have
	\begin{equation}\label{eq:critical-expansion}
		\begin{aligned}
			\lambda_k(h)
			={}&
			e_k(h)
			+
			\frac{4h}{\pi}\theta\bigl(\tau_k(h)\bigr)
			\\
			&-
			\frac{4}{\pi\,2^{2/3}}
			h^{4/3}
			\left(
			2\nu-2+\frac{4}{\pi}\theta\bigl(\tau_k(h)\bigr)
			\right)
			\theta'\bigl(\tau_k(h)\bigr)
			+
			\mathcal O(h^{5/3}),
		\end{aligned}
	\end{equation}
	as $h\to0^+\,$. 
\end{proposition}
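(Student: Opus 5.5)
Starting from \eqref{eq:transition-ak}, I convert the statement about $a_k(\xi)$ into one about $\lambda_k(h)$. By \eqref{spectral-equation}, $a_k=-(\lambda_k-2(1+\nu)h)/(4h)$. Writing $\lambda_k(h)=e_k(h)+4h\,\sigma_k$, a direct computation gives $a_k=1-k-\sigma_k$. Hence \eqref{eq:transition-ak} is equivalent to
\[\sigma_k=\frac{1}{\pi}\theta(t_k)+\mathcal O(h^{2/3}),\]
where $t_k$ is defined implicitly by \eqref{eq:def-tk} with $\kappa_k=\lambda_k/(4h)=k+\frac{\nu-1}{2}+\sigma_k$. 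Corollary~\ref{cor:kappa-k-critical} guarantees $\sigma_k=\mathcal O(1)$. The statement also needs the $h^{4/3}$ term, i.e.\ $\sigma_k$ to order $h^{1/3}$, so the plan is to expand $t_k$ around $\tau_k(h)$ and then solve for $\sigma_k$ by a fixed point argument.

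\emph{Step 1: expand $t_k$.} From \eqref{eq:def-tk} I get $t_k=(2\kappa_k)^{2/3}\bigl(\xi/(4\kappa_k)-1\bigr)$. Insert $4h\kappa_k=4kh+(2\nu-2)h+4h\sigma_k$ and $4kh=1-2^{2/3}h^{2/3}\tau_k$, so that $4h\kappa_k=1+\mathcal O(h^{2/3})$. Then
\[\frac{\xi}{4\kappa_k}-1=\frac{1-4h\kappa_k}{4h\kappa_k}=2^{2/3}h^{2/3}\tau_k-(2\nu-2+4\sigma_k)h+\mathcal O(h^{4/3})\]
and $(2\kappa_k)^{2/3}=(2h)^{-2/3}(1+\mathcal O(h^{2/3}))$. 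Combining these,
\[t_k=\tau_k-2^{-2/3}h^{1/3}\bigl(2\nu-2+4\sigma_k\bigr)+\mathcal O(h^{2/3}),\]
uniformly, because $\tau_k$ and $\sigma_k$ are bounded.

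\emph{Step 2: solve for $\sigma_k$.} Since $\theta$ is smooth and $t_k$ stays in a compact set, the zeroth-order relation gives $\sigma_k=\theta(\tau_k)/\pi+\mathcal O(h^{1/3})$. Substituting this back into the Taylor expansion of $\theta$ around $\tau_k$ yields
\[\sigma_k=\frac{1}{\pi}\theta(\tau_k)-\frac{2^{-2/3}h^{1/3}}{\pi}\Bigl(2\nu-2+\frac4\pi\theta(\tau_k)\Bigr)\theta'(\tau_k)+\mathcal O(h^{2/3}).\]
Multiplying by $4h$ gives \eqref{eq:critical-expansion} with remainder $\mathcal O(h^{5/3})$.

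\emph{Main obstacle.} The hard step is making \eqref{eq:transition-ak} hold with an error that really is $\mathcal O(\kappa_k^{-2/3})$ uniformly. Three points need checking:
\begin{itemize}
\item[(i)] The expansion \eqref{eq:Airy-variable-transition} of $\kappa^{2/3}\widehat\zeta$ must be uniform on both sides of the turning point, including the branch given by \cite[Eq.~(13.21.20)]{DLMF}. I would verify it by Taylor expanding \eqref{zetahat} near $\xi=4\kappa$, where the $\mu$-terms contribute only at relative order $\kappa^{-1}$.
\item[(ii)] The remainder $R$ is $\mathcal O(\kappa^{-1})$ relative to $\operatorname{envBi}(t_k)$, which is bounded above and below. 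Dividing by $A(t_k)>0$ then leaves an error of order $\kappa^{-2/3}$ in $\sin(\pi a+\theta)$, and this translates into the same order for $a$.
\item[(iii)] The correct branch is $m=1-k$. I would fix it by continuity in $\tau$: for large positive $\tau$, matching with Remark~\ref{rem:2.6} forces $\sigma_k\to0$, i.e.\ $a_k\to1-k$. Corollary~\ref{cor:kappa-k-critical} and the monotonicity of $\theta$ prevent any jump between branches. Alternatively, the Sturm count of Lemma~\ref{lemma-index} does the same job.
\end{itemize}
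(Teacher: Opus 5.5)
Your proposal is correct and follows essentially the same route as the paper. Both arguments start from \eqref{eq:transition-ak}, rewrite it as $\lambda_k=e_k+\frac{4h}{\pi}\theta(t_k)+\mathcal O(h^{5/3})$, obtain $t_k=\tau_k-2^{-2/3}h^{1/3}\bigl(2\nu-2+\frac4\pi\theta(\tau_k)\bigr)+\mathcal O(h^{2/3})$ by a two-step bootstrap, and then Taylor expand $\theta$ at $\tau_k$. The differences are cosmetic: you expand $t_k=(2\kappa_k)^{2/3}\bigl(\xi/(4\kappa_k)-1\bigr)$ directly in terms of $\sigma_k$, whereas the paper expands the equivalent exact relation $1=\lambda_k+2^{2/3}t_kh^{2/3}\lambda_k^{1/3}$; and, like the paper, you take \eqref{eq:transition-ak}, including the branch choice $m=1-k$, as given.
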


\begin{remark}[Asymptotic scale]
	Since $|\tau_k(h)|\leq 2^{-2/3}T\,$, the functions $\theta(\tau_k(h))$
	and $\theta'(\tau_k(h))$ remain uniformly bounded. Moreover,
	$\theta>0$ on $\mathbb R\,$.
	Therefore, in \eqref{eq:critical-expansion}, the first correction
	is of order $h$, whereas the second one is
	$\mathcal O(h^{4/3})\,$.
 Thus these terms form an asymptotic scale uniformly in the
	transition window \eqref{eq:critical-window1}.
\end{remark}

\begin{proof}[Proof of Proposition~\ref{prop:critical}]
	Let
	\[
	\tau_k=\tau_k(h)
	:=
	\frac{1-4kh}{2^{2/3}h^{2/3}}.
	\]
	Under the assumption of the proposition, we have
	\[
	|\tau_k|\leq 2^{-2/3}T\,.
	\]
	With $t_k$ defined by \eqref{eq:def-tk}, formula
	\eqref{eq:transition-ak} gives
	\begin{equation}\label{eq:transition-ak-proof}
		a_k(h^{-1})
		=
		1-k-\frac{\theta(t_k)}{\pi}
		+\mathcal O(h^{2/3})\,.
	\end{equation}
	Since the $k$-th eigenvalue and the corresponding $a$-zero are
	related by
	\[
	\lambda_k(h)
	=
	h\bigl(2b-4a_k(h^{-1})\bigr)\,,
	\]
	we obtain
	\begin{equation}\label{eq:Airy-eigenvalue-transition-ek}
		\lambda_k(h)
		=
		e_k(h)
		+
		\frac{4h}{\pi}\theta(t_k)
		+
		\mathcal O(h^{5/3})\,,
	\end{equation}
	where
	\[
	e_k(h)=(4k+2\nu-2)h\,.
	\]
	We now eliminate the auxiliary parameter $t_k\,$. For convenience,
	set
	\begin{equation}\label{eq:def-q-transition}
		q(s):=
		2\nu-2+\frac{4}{\pi}\theta(s)\,.
	\end{equation}
	Since
	\[
	4kh
	=
	1-2^{2/3}\tau_k h^{2/3}\,,
	\]
	equation \eqref{eq:Airy-eigenvalue-transition-ek} yields
	\begin{equation}\label{eq:lambda-tau-tk}
		\lambda_k(h)
		=
		1-2^{2/3}\tau_k h^{2/3}
		+hq(t_k)
		+\mathcal O(h^{5/3})\,.
	\end{equation}
	On the other hand, since
	\[
	4\kappa_k=\frac{\lambda_k(h)}{h}\,,
	\]
	the defining relation \eqref{eq:def-tk} is equivalently 
	\begin{equation}\label{eq:exact-transition-relation}
		1
		=
		\lambda_k(h)
		+
		2^{2/3}t_k h^{2/3}\lambda_k(h)^{1/3}\,.
	\end{equation}
	From \eqref{eq:lambda-tau-tk} and the boundedness of $\tau_k$ and
	$t_k$, we have
	\[
	\lambda_k(h)
	=
	1-2^{2/3}\tau_k h^{2/3}
	+\mathcal O(h)\,,
	\]
	and therefore
	\begin{equation}\label{eq:lambda-one-third}
		\lambda_k(h)^{1/3}
		=
		1-\frac{2^{2/3}}{3}\tau_k h^{2/3}
		+\mathcal O(h)\,.
	\end{equation}
	Substituting \eqref{eq:lambda-tau-tk} and
	\eqref{eq:lambda-one-third} into
	\eqref{eq:exact-transition-relation}, we obtain
	\[
	0
	=
	2^{2/3}(t_k-\tau_k)h^{2/3}
	+
	hq(t_k)
	-
	\frac{2^{4/3}}{3}t_k\tau_k h^{4/3}
	+
	\mathcal O(h^{5/3})\,.
	\]
	Since $t_k$ and $\tau_k$ remain bounded, this first implies
	\begin{equation}\label{eq:tk-tau-first}
		t_k-\tau_k=\mathcal O(h^{1/3})\,.
	\end{equation}
	The function $\theta\,$, and hence $q\,$, is smooth. Thus
	\[
	q(t_k)=q(\tau_k)+\mathcal O(h^{1/3})\,,
	\]
	and a second use of the preceding identity gives
	\begin{equation}\label{eq:tk-tau-refined}
		t_k
		=
		\tau_k
		-
		2^{-2/3}q(\tau_k)h^{1/3}
		+
		\mathcal O(h^{2/3})\,.
	\end{equation}
	The Taylor expansion of $\theta$ at $\tau_k$ now gives
	\begin{equation}\label{eq:theta-tk-tau}
		\theta(t_k)
		=
		\theta(\tau_k)
		-
		2^{-2/3}q(\tau_k)\theta'(\tau_k)h^{1/3}
		+
		\mathcal O(h^{2/3})\,,
	\end{equation}
	uniformly for $|\tau_k|\leq T\,$.\\
	Inserting
	\eqref{eq:theta-tk-tau} into
	\eqref{eq:Airy-eigenvalue-transition-ek}, we finally obtain
	\[
	\lambda_k(h)
	=
	e_k(h)
	+
	\frac{4h}{\pi}\theta(\tau_k)
	-
	\frac{4}{\pi\,2^{2/3}}
	q(\tau_k)\theta'(\tau_k)h^{4/3}
	+
	\mathcal O(h^{5/3})\,.
	\]
	Recalling \eqref{eq:def-q-transition}, this is precisely
	\eqref{eq:critical-expansion}.
\end{proof}

\begin{remark}[Eigenvalue spacing]
	Uniformly in the transition window,
	\[
	\lambda_{k+1}(h)-\lambda_k(h)
	=
	4h
	-\frac{2^{10/3}}{\pi}
	\theta'\bigl(\tau_k(h)\bigr)h^{4/3}
	+\mathcal O(h^{5/3})\,.
	\]
	Since $\theta'<0$, the first correction to the leading spacing
	$4h$ is positive.
\end{remark}


\subsection{Matching with the subcritical and supercritical regimes}\label{subsec:matching}
\subsubsection{Accurate Dunster expansions}\label{ss:accurate-Dunster}
The classical uniform Airy expansions from the NIST
Handbook~\cite{DLMF}, used in Section~\ref{sec:transition}, are perfectly
suited to the transition regime, where the Airy variable
$s=\kappa_k(h)^{2/3}\widehat\zeta$ remains bounded.
They could also be used to carry out the supercritical matching. On
the subcritical side, however, the matching involves an exponentially
small contribution, and the classical remainder estimates are too
large to detect it. Indeed, in the matching regimes introduced below,
Propositions~\ref{prop:2.5} and~\ref{lem:rough-transition} give,
respectively on the subcritical and supercritical sides,
\begin{equation}\label{eq:kappa-matching}
	\kappa_k(h)=\frac{\lambda_k(h)}{4h}
	=\frac{1+\mathcal O(h^\delta)}{4h}\,.
\end{equation}
The proof of Proposition~\ref{prop:critical} starts from the
uniform quantization condition~\eqref{ImplicitAiryZeros}, which in the
present spectral setting reads
\begin{equation}\label{eq:Airy-spectral-quantization}
	\cos(\pi a)\Ai\bigl(\kappa_k(h)^{2/3}\widehat\zeta\bigr)
	+
	\sin(\pi a)\Bi\bigl(\kappa_k(h)^{2/3}\widehat\zeta\bigr)
	+
	R(\kappa_k(h),\mu,\xi)
	=0\,,
\end{equation}
where $\widehat\zeta$ is the Liouville--Green variable. Moreover, by \eqref{eq:Airy-remainder},
\[
|R(\kappa_k(h),\mu,\xi)|
\leq
C\kappa_k(h)^{-1}
\operatorname{env}\Bi
\bigl(\kappa_k(h)^{2/3}\widehat\zeta\bigr)\,.
\]
Thus, dividing \eqref{eq:Airy-spectral-quantization} by
$\Bi\bigl(\kappa_k(h)^{2/3}\widehat\zeta\bigr)\,$, the contribution
of the remainder is only controlled by $\mathcal O(h)\,$,
where we used \eqref{eq:kappa-matching}.
This estimate is, however, far too weak to detect the exponentially small
$\Ai(s)/\Bi(s)$ ratio on the subcritical side, when $s\to+\infty\,$.
Indeed, by the standard Airy asymptotics
\cite[(9.7.5), (9.7.7)]{DLMF},
\begin{equation}\label{eq:Airy-ratio-plus}
	\frac{\Ai(s)}{\Bi(s)}
	=
	\frac12 \exp\left(-\frac43 s^{3/2}\right)
	\left(1+\mathcal O(s^{-3/2})\right)\,,
	\mbox{ as } s\to+\infty\,.
\end{equation}

\mn
We therefore use here the very recent uniform Airy expansions for
Whittaker functions obtained by Dunster~\cite[Theorem~3.1, Eqs.~(3.40)--(3.41)]
{Dunster2026}.  Our spectral condition can be written in the form
\begin{equation}\label{eq:Dunster-spectral-condition}
	\mathcal A(s,-\pi a)\,\check A
	+
	\mathcal A'(s,-\pi a)\,\check B
	=0\,,
\end{equation}
where
\[
s=\kappa_k(h)^{2/3}\widehat\zeta,
\qquad
\mathcal A(s,-\pi a)
:=
\cos(\pi a)\Ai(s)
+
\sin(\pi a)\Bi(s)\,,
\]
and $\mathcal A'$ denotes the derivative with respect to the first
variable $s\,$. The coefficient functions $\check A$ and $\check B$
are those introduced by Dunster. Moreover, his expansions
\cite[Eqs.~(3.37)--(3.38)]{Dunster2026}, together with
\eqref{eq:kappa-matching}, imply
\begin{equation}\label{eq:Dunster-ratio}
	\frac{\check B}{\check A}
	=
	\mathcal O\bigl(\kappa_k(h)^{-4/3}\bigr)
	=
	\mathcal O(h^{4/3})\,.
\end{equation}
Dividing \eqref{eq:Dunster-spectral-condition} by $\check A\,$, we recover
the quantization condition \eqref{ImplicitAiryZeros}, with the new
structured remainder
\begin{equation}\label{eq:Dunster-remainder}
	R_D
	=
	\mathcal A'(s,-\pi a)\frac{\check B}{\check A}\,,
\end{equation}
where $R_D$ denotes the Dunster remainder.\\
The crucial point for the matching argument is the particular structure
of this remainder. Indeed, setting $r=\check B/\check A$, we obtain
\begin{equation}\label{eq:Dunster-tangent}
	\tan(\pi a)
	=
	-\frac{\Ai(s)+r\,\Ai'(s)}
	{\Bi(s)+r\,\Bi'(s)},
	\qquad
	r
	=\mathcal O(h^{4/3})\,.
\end{equation}
In the transition regime of Proposition~\ref{prop:critical}, we have
\[
s=\tau_k(h)+\mathcal O(h^{1/3})\,,
\]
so that $s$ remains bounded. In this case, the new Dunster form does not
make a substantial difference. In the matching regions considered below, however, the Airy variable $s$
is no longer bounded. More precisely, for fixed $t>0\,$, we shall show that
\begin{equation}\label{eq:Airy-variable-matching}
	s
	=
	\pm\frac{t}{2^{2/3}}h^{\delta-2/3}(1+o(1))
	\longrightarrow\pm\infty\,,
\end{equation}
with the upper sign on the subcritical side and the lower sign on the
supercritical side. On the subcritical side, the crucial point is that
\begin{equation}\label{eq:Dunster-factorization}
	\frac{\Ai(s)+r\Ai'(s)}
	{\Bi(s)+r\Bi'(s)}
	=
	\frac{\Ai(s)}{\Bi(s)}
	\left(
	\frac{1+r\,\Ai'(s)/\Ai(s)}
	{1+r\,\Bi'(s)/\Bi(s)}
	\right)\,.
\end{equation}
For $s\to+\infty\,$, the standard Airy asymptotics
\cite[Sec.~9.7]{DLMF} give
\begin{equation}\label{eq:Airy-log-derivatives-plus}
	\frac{\Ai'(s)}{\Ai(s)}
	=
	-s^{1/2}\left(1+\mathcal O(s^{-3/2})\right),
	\qquad
	\frac{\Bi'(s)}{\Bi(s)}
	=
	s^{1/2}\left(1+\mathcal O(s^{-3/2})\right)\,.
\end{equation}
Since
\[
r=\mathcal O(h^{4/3})\,,
\]
and using \eqref{eq:Airy-variable-matching}, the second factor in
\eqref{eq:Dunster-factorization} is
\[
1+\mathcal O\bigl(h^{4/3}s^{1/2}\bigr)
=
1+\mathcal O\bigl(h^{1+\delta/2}\bigr)
\]
in the matching regime. Combining this with
\eqref{eq:Airy-ratio-plus}, we obtain
\begin{equation}\label{eq:Airy-ratio-subcritical}
	\frac{\Ai(s)+r\Ai'(s)}
	{\Bi(s)+r\Bi'(s)}
	=
	\frac12
	\exp\left(-\frac43s^{3/2}\right)
	\left(1+\mathcal O\bigl(h^{1-3\delta/2}\bigr)\right)\,.
\end{equation}
Thus the exponentially small term is preserved.
This is precisely what was lost in the additive remainder of the
classical Airy expansion. Then, as in Proposition~\ref{prop:critical}, to obtain the eigenvalue
asymptotics, we write
\begin{equation}\label{eq:a-eta-matching}
	a=1-k-\eta_k\,,
	\qquad
	\eta_k=\frac{\lambda_k(h)-e_k(h)}{4h}\,.
\end{equation}
The quantization condition \eqref{eq:Dunster-tangent}, together with
\eqref{eq:a-eta-matching}, gives
\[
\tan(\pi\eta_k)
=
\frac{\Ai(s)+r\Ai'(s)}
{\Bi(s)+r\Bi'(s)}\,.
\]
Using \eqref{eq:Airy-ratio-subcritical}, the right-hand side is
exponentially small. Since we are on the branch for which
$\eta_k\to0$, we have
$
\tan(\pi\eta_k)
=
\pi\eta_k\bigl(1+\mathcal O(\eta_k^2)\bigr)\,.
$
Moreover, $\eta_k$ is exponentially small, so that this additional
error is negligible with respect to
$\mathcal O(h^{1-3\delta/2})$. Therefore,
\begin{equation}\label{eq:eigenvalue-shift-Airy}
	\lambda_k(h)-e_k(h)
	=
	\frac{2h}{\pi}
	\exp\left(-\frac43s^{3/2}\right)
	\left(1+\mathcal O(h^{1-3\delta/2})\right)\,.
\end{equation}

\mn
On the supercritical side, $s\to-\infty$ and both Airy functions are
oscillatory with the same algebraic size. More precisely, setting
\begin{equation}\label{phase}
\Phi(s)=\frac23(-s)^{3/2}+\frac{\pi}{4}\,,
\end{equation}
the standard Airy asymptotics \cite[Sec.~9.7]{DLMF} give
\begin{equation}\label{eq:Airy-asymptotics-minus}
	\begin{aligned}
		\Ai(s)
		&=
		\frac{(-s)^{-1/4}}{\sqrt{\pi}}
		\left(\sin\Phi(s)+\mathcal O((-s)^{-3/2})\right),\\
		\Bi(s)
		&=
		\frac{(-s)^{-1/4}}{\sqrt{\pi}}
		\left(\cos\Phi(s)+\mathcal O((-s)^{-3/2})\right)\,,
	\end{aligned}
\end{equation}
and
\begin{equation}\label{eq:Airy-derivatives-minus}
	\begin{aligned}
		\Ai'(s)
		&=
		-\frac{(-s)^{1/4}}{\sqrt{\pi}}
		\left(\cos\Phi(s)+\mathcal O((-s)^{-3/2})\right)\,,\\
		\Bi'(s)
		&=
		\frac{(-s)^{1/4}}{\sqrt{\pi}}
		\left(\sin\Phi(s)+\mathcal O((-s)^{-3/2})\right)\,.
	\end{aligned}
\end{equation}
Thus, on the supercritical side, the matching is governed by the
oscillatory phase $\Phi(s)$. In contrast with the subcritical case, we do not form ratios of Airy
functions, which would be singular at their zeros. Instead, we insert
these oscillatory expansions directly into the quantization condition.

\subsubsection{Subcritical matching}

We consider the subcritical matching regime 
\begin{equation}\label{eq:subcritical-matching-regime}
	4kh=1-th^\delta\,,
	\qquad
	t\in[t_0,t_1]\,,
	\qquad
	0<\delta<\frac23\,.
\end{equation}
By Proposition~\ref{prop:2.5}, for any fixed $\eta\in(0,1)$,
\[
0\leq \lambda_k(h)-e_k(h)
\leq
\exp\left(
-\frac{2(1-\eta)}{3}
t^{3/2}h^{3\delta/2-1}
\right).
\]
Since
\[
e_k(h)=1-th^\delta+(2\nu-2)h\,,
\]
it follows that, uniformly for $t\in[t_0,t_1]\,$,
\begin{equation}\label{eq:lambda-subcritical-size}
	1-\lambda_k(h)
	=
	th^\delta\bigl(1+\mathcal O(h^{1-\delta})\bigr).
\end{equation}
We now recall the definition of the Liouville--Green variable entering the recent
uniform Airy expansion of Dunster~\cite{Dunster2026}. For
\[
\kappa=\frac{\lambda}{4h}\,,
\qquad
\mu=\frac{\nu}{2}\,,
\qquad
z=\frac4\lambda\,,
\qquad
\alpha=\frac{2\mu}{\kappa}
=\frac{4\nu h}{\lambda}\,,
\]
the Liouville--Green variable $\widehat\zeta=\widehat\zeta(z,\alpha)$
is defined, in our notation, by
\begin{equation}\label{eq:LG-zeta-definition}
	\frac23\widehat\zeta(z,\alpha)^{3/2}
	=
	\int_{z_+(\alpha)}^{z}
	\sqrt{\frac{v^2-4v+\alpha^2}{4v^2}}\,dv\,,
	\qquad
	z_+(\alpha)=2+\sqrt{4-\alpha^2}\,.
\end{equation}
Since $\alpha=\mathcal O(h)\,$, we have
$z_+(\alpha)=4+\mathcal O(h^2)\,$. Therefore, in the regime considered here,
\[
\widehat\zeta\left(\frac4\lambda,\frac{4\nu h}{\lambda}\right)
=
2^{2/3}(1-\lambda)
\bigl(1+\mathcal O(h^\delta)\bigr)\,.
\]
Evaluating at $\lambda=\lambda_k(h)\,$, we set
\[
\kappa_k(h)=\frac{\lambda_k(h)}{4h}\,,
\qquad
\widehat\zeta_k(h)
=
\widehat\zeta\left(
\frac4{\lambda_k(h)},
\frac{4\nu h}{\lambda_k(h)}
\right)\,,
\qquad
s_k(h)=\kappa_k(h)^{2/3}\widehat\zeta_k(h)\,.
\]
Then \eqref{eq:lambda-subcritical-size} gives
\begin{equation}\label{eq:sk-subcritical-size}
	s_k(h)
	=
	\frac{t}{2^{2/3}}h^{\delta-2/3}
	\left(1+\mathcal O\left(
	h^{\min\{\delta,1-\delta\}}\right)\right)
	\longrightarrow+\infty\,.
\end{equation}
Therefore, applying \eqref{eq:eigenvalue-shift-Airy} with
$s=s_k(h)$, we obtain
\begin{equation}\label{eq:eigenvalue-shift-subcritical}
	\lambda_k(h)-e_k(h)
	=
	\frac{2h}{\pi}
	\exp\left(-\frac43s_k(h)^{3/2}\right)
	\left(1+\mathcal O(h^{1-3\delta/2})\right)\,.
\end{equation}
It remains to compare the exponent in this formula with the
tunneling action $S^*(e_k(h))$. We shall prove that
\begin{equation}\label{eq:Airy-action-identification-goal}
	\frac43s_k(h)^{3/2}
	=
	\frac{2S^*(e_k(h))}{h}
	+\mathcal O(h^{\delta/2})\,.
\end{equation}
To prove this, we first consider a general $\lambda$. Setting
$
s=\kappa^{2/3}\widehat\zeta,
$
and using \eqref{eq:LG-zeta-definition}, we obtain
\begin{equation}\label{eq:Airy-exponent-LG}
	\frac43s^{3/2}
	=
	2\kappa
	\int_{z_+(\alpha)}^{4/\lambda}
	\sqrt{\frac{v^2-4v+\alpha^2}{4v^2}}\,dv\,.
\end{equation}
The change of variables
\[
v=\frac{4x^2}{\lambda}
\]
gives
\begin{equation}\label{eq:LG-action-x}
	\frac43s^{3/2}
	=
	\frac{2}{h}
	\int_{x_+(\lambda,h)}^1
	\sqrt{x^2-\lambda+\frac{\nu^2h^2}{x^2}}\,dx\,,
\end{equation}
where
\begin{equation}\label{eq:x-plus-subcritical}
	x_+(\lambda,h)^2
	=
	\frac{\lambda+\sqrt{\lambda^2-4\nu^2h^2}}{2}\,.
\end{equation}
Set
\[
I_h(\lambda)
:=
\int_{x_+(\lambda,h)}^1
\sqrt{x^2-\lambda+\frac{\nu^2h^2}{x^2}}\,dx\,.
\]
Since
\[
S^*(\lambda)
=
\int_{\sqrt{\lambda}}^1\sqrt{x^2-\lambda}\,dx\,,
\]
a direct comparison of the two integrals yields, uniformly in the
regime considered here,
\begin{equation}\label{eq:Ih-Sstar}
	I_h(\lambda)
	=
	S^*(\lambda)
	+
	\mathcal O\bigl(h^2\sqrt{1-\lambda}\bigr)
	+
	\mathcal O(h^3)\,.
\end{equation}
Indeed, on $[\sqrt{\lambda},1]\,$, we have
\[
0\leq
\sqrt{x^2-\lambda+\frac{\nu^2h^2}{x^2}}
-\sqrt{x^2-\lambda}
\leq
\frac{\nu^2h^2}{x^2\sqrt{x^2-\lambda}}\,,
\]
and
\[
\int_{\sqrt{\lambda}}^1
\frac{dx}{x^2\sqrt{x^2-\lambda}}
=
\frac{\sqrt{1-\lambda}}{\lambda}\,.
\]
Finally,
$\sqrt{\lambda}-x_+(\lambda,h)=O(h^2)$ and the integrand is $O(h)$
on $[x_+(\lambda,h),\sqrt{\lambda}]\,$, so that the contribution of this
interval is $O(h^3)$. Applying \eqref{eq:Ih-Sstar} to $\lambda=\lambda_k(h)$ and using
\eqref{eq:lambda-subcritical-size}, we obtain
\begin{equation}\label{eq:Ih-Sstar-lambdak}
	\frac{I_h(\lambda_k(h))}{h}
	=
	\frac{S^*(\lambda_k(h))}{h}
	+
	O\bigl(h^{1+\delta/2}\bigr)\,.
\end{equation}
Moreover,
\[
S^{*\prime}(E)
=
-\frac12\operatorname{arcosh}(E^{-1/2})
=
\mathcal O\bigl(\sqrt{1-E}\bigr),
\qquad E\uparrow1.
\]
Since $\lambda_k(h)-e_k(h)=\mathcal O(h)$, the mean value theorem gives
\[
S^*(\lambda_k(h))-S^*(e_k(h))
=
S^{*\prime}(\xi_k)
\bigl(\lambda_k(h)-e_k(h)\bigr),
\]
for some $\xi_k$ between $e_k(h)$ and $\lambda_k(h)\,$. Since
$1-\xi_k\asymp h^\delta\,$, it follows that
\[
\frac{S^*(\lambda_k(h))-S^*(e_k(h))}{h}
=
\mathcal O(h^{\delta/2})\,.
\]
Combining this with \eqref{eq:LG-action-x} and
\eqref{eq:Ih-Sstar-lambdak}, we obtain
\begin{equation}\label{eq:Airy-action-identification}
	\frac43s_k(h)^{3/2}
	=
	\frac{2S^*(e_k(h))}{h}
	+
	\mathcal O(h^{\delta/2})\,.
\end{equation}
Finally, inserting \eqref{eq:Airy-action-identification} into
\eqref{eq:eigenvalue-shift-subcritical} yields
\begin{equation}\label{eq:subcritical-matching-final}
	\lambda_k(h)
	=
	e_k(h)
	+
	\frac{2h}{\pi}
	\exp\left(-\frac{2S^*(e_k(h))}{h}\right)
	\left(
	1+
	\mathcal O\left(
	h^{\min\{\delta/2,\;1-3\delta/2\}}
	\right)
	\right)\,,
\end{equation}
uniformly for $t\in[t_0,t_1]$ in
\eqref{eq:subcritical-matching-regime}. Thus, in the matching regime
\[
4kh=1-th^\delta\,,
\qquad
t\in[t_0,t_1]\,,
\qquad
0<\delta<\frac23\,,
\]
the Airy asymptotics agree with the subcritical tunneling formula.
In particular, we recover both the exponential action and the
prefactor $2h/\pi$ in Theorem~\ref{thm:main}.

\subsubsection{Supercritical matching}

We now consider the supercritical matching regime
\begin{equation}\label{eq:supercritical-matching-regime}
	4kh=1+th^\delta\,,
	\qquad
	t\in[t_0,t_1]\,,
	\qquad
	0<\delta<\frac23\,.
\end{equation}
By Proposition~\ref{lem:rough-transition},
\[
\lambda_k(h)
=
e_k(h)+\mathcal O(h^{3\delta/2})\,.
\]
Since
\[
e_k(h)
=
1+th^\delta+(2\nu-2)h\,,
\]
we obtain, uniformly for $t\in[t_0,t_1]\,$,
\begin{equation}\label{eq:lambda-supercritical-size}
	\lambda_k(h)-1
	=
	th^\delta
	\left(
	1+
	\mathcal O\left(
	h^{\min\{1-\delta,\delta/2\}}
	\right)
	\right)\,.
\end{equation}
As on the subcritical side, we set
\[
\kappa_k(h)
=
\frac{\lambda_k(h)}{4h}\,,
\qquad
s_k(h)
=
\kappa_k(h)^{2/3}\widehat\zeta_k(h)\,.
\]
The corresponding expansion of the Liouville--Green variable gives
\begin{equation}\label{eq:sk-supercritical-size}
	s_k(h)
	=
	-\frac{t}{2^{2/3}}
	h^{\delta-2/3}
	\left(
	1+
	\mathcal O\left(
	h^{\min\{1-\delta,\delta/2\}}
	\right)
	\right)
	\longrightarrow-\infty\,.
\end{equation}
We may therefore use the oscillatory Airy expansions
\eqref{eq:Airy-asymptotics-minus}--\eqref{eq:Airy-derivatives-minus}
from Subsection~\ref{ss:accurate-Dunster}. Moreover, by \eqref{eq:Dunster-tangent} and
\eqref{eq:sk-supercritical-size},
\begin{equation}\label{eq:r-sk-supercritical}
	r(-s_k(h))^{1/2}
	=
	\mathcal O(h^{1+\delta/2})\,.
\end{equation}
The quantization condition \eqref{eq:Dunster-tangent} can equivalently
be written as
\[
\sin(\pi a_k)
\bigl(
\Bi(s_k(h))+r\Bi'(s_k(h))
\bigr)
+
\cos(\pi a_k)
\bigl(
\Ai(s_k(h))+r\Ai'(s_k(h))
\bigr)
=0\,.
\]
Using the oscillatory Airy expansions from
Subsection~\ref{ss:accurate-Dunster} and
\eqref{eq:r-sk-supercritical}, we obtain easily
\begin{equation}\label{eq:supercritical-phase-condition}
	\sin\left(
	\pi a_k
	+
	\frac23(-s_k(h))^{3/2}
	+
	\frac\pi4
	\right)
	=
	\mathcal O\left(h^{1-3\delta/2}\right)\,.
\end{equation}
Using the same labeling of the $k$-th zero as in Proposition~\ref{prop:critical},
we obtain
\[
\pi a_k
=
(1-k)\pi
-
\frac23(-s_k(h))^{3/2}
-
\frac\pi4
+
\mathcal O\left(h^{1-3\delta/2}\right)\,.
\]
Since
\[
a_k
=
\frac{\nu}{2}
-
\frac{\lambda_k(h)}{4h}
+
\frac12\,,
\]
we obtain
\begin{equation}\label{eq:lambda-supercritical-Airy}
	\lambda_k(h)
	=
	4h\left(
	k+\frac\nu2-\frac14
	\right)
	+
	\frac{8h}{3\pi}
	(-s_k(h))^{3/2}
	+
	\mathcal O\left(h^{2-3\delta/2}\right).
\end{equation}
By \eqref{eq:sk-supercritical-size},
\[
(-s_k(h))^{3/2}
=
\frac{t^{3/2}}{2}
h^{3\delta/2-1}
\left(
1+
\mathcal O\left(
h^{\min\{1-\delta,\delta/2\}}
\right)
\right).
\]
Substituting this into \eqref{eq:lambda-supercritical-Airy}, and using
$0<\delta<2/3$, we obtain
\begin{equation}\label{eq:supercritical-matching-final}
	\lambda_k(h)
	=
	4h\left(
	k+\frac\nu2-\frac14
	\right)
	+
	\frac{4}{3\pi}
	t^{3/2}h^{3\delta/2}
	+
	o(h^{3\delta/2})\,,
\end{equation}
uniformly for $t\in[t_0,t_1]$. This is precisely the threshold expansion of the supercritical
Bohr--Sommerfeld formula. Indeed, setting
\[
\rho_k
=
4h\left(
k+\frac\nu2-\frac14
\right)
=
1+th^\delta+(2\nu-1)h\,,
\]
and using
\[
E_0(1+\varepsilon)
=
1+\varepsilon
+
\frac{4}{3\pi}\varepsilon^{3/2}
+
O(\varepsilon^2),
\qquad
\varepsilon\downarrow0\,,
\]
we recover \eqref{eq:supercritical-matching-final}. Hence the Airy
asymptotics and the supercritical Bohr--Sommerfeld formula match in
the regime \eqref{eq:supercritical-matching-regime}.


\section{Uniform gap}\label{sec:gap}

We prove in this section the non-asymptotic ineqiality on the gap $\lambda_{k+1}(h)-\lambda_k(h)$\,. 

\begin{proof}[Proof of Theorem~\ref{thm:gap}]
For simplicity, we drop $h$ from the notation and write $\lambda_k$ for $\lambda_{k}(h)\,$. We will prove that $\lambda_{k+1}>\lambda_{k}+4h\,$.

For any $\mu\geq \lambda_1\,,$ recall from \eqref{regular-solution} that
\begin{equation}\label{eq:def.u-mu}
u_\mu(x)=x^{\nu+\frac12}e^{-x^2/2h}\,
M\Bigl(\tfrac{\nu+1}{2}-\tfrac{\mu}{4h},\,\nu+1\,,\,\tfrac{x^2}{h}\Bigr)
\end{equation}
is  a solution of $(T_h-\mu)u=0\,$ which is regular on $\overline\R_+$
In particular,
\[f_k=u_{\lambda_k}=x^{\nu+\frac12}e^{-x^2/2h}\,
M\Bigl(\tfrac{\nu+1}{2}-\tfrac{\lambda_k}{4h}\,,\,\nu+1,\,\tfrac{x^2}{h}\Bigr)\]
is  an eigenfunction of $T_h$ associated to the eigenvalue $\lambda_{k}\,$.

Now we take $\mu=\lambda_k+4h$ and express $u_\mu$ in terms of $f_k$ as
\begin{equation}\label{eq:identity}
u_\mu=c^{-1} Af_k\,,\quad c=\frac{\nu+1}{2}+\frac{\lambda_k}{4h},\quad A=\frac{x}{2}\frac{d}{dx}+\frac{\lambda_k+h}{4h}-\frac{x^2}{2h}\,.
\end{equation} 
This follows from a direct calculation  of $Af_k$ and by using the following identity
\[
z\frac{d}{dz}M(a,b,z)+(b-a-z)M(a,b,z)=(b-a)M(a-1,b,z)\,,
\]
which results from  combining (see   \cite[Eq. (13.3.15), (13.3.4), (13.3.1)]{DLMF})
\[\begin{gathered}
    \frac{d}{dz}M\left(a,b,z\right)=\frac{a}{b}M\left(a+1,b+1,z
\right)\,,\\
zM\left(a+1,b+1,z\right)= bM\left(a+1,b,z\right)-bM\left(a,b,z\right)\,,\\
aM\left(a+1,b,z\right)=(b-a)M\left(a-1,b,z\right)+(2a-b+z)M\left(a,b,z\right)\,.
\end{gathered}
\]
It is worth noting that in light of the  identity
\[(T_h-\lambda_k-4h)A=(A+1)(T_h-\lambda_k),\]
the operator $A$ is akin to a creation operator for $T_h\,$.

Note that $u_\mu$ does not satisfy the Dirichlet boundary condition at $x=1$ since by \eqref{eq:identity},
\[u_\mu(1)=\frac{1}{2c}f'_k(1)\not=0\,.\]
Actually, since $f_k$ is positive near $x=0$ and it has $k-1$ simple zeros in $(0,1)$, the sign of $u_\mu(1)$ is the same as $(-1)^{k}$.
Thus, $\lambda_{k+1}\not=\mu\,.$ We will prove that $\lambda_{k+1}>\mu\,$. We argue by contradiction and suppose that $\lambda_{k+1}<\mu$\,.
We claim that
\begin{equation}\label{eq:claim}
    \text{If $f_k$ does not vanish on an open interval $I\subset(0,1),$ then $u_\mu$ has at most one zero in $I$\,.}
\end{equation}
We postpone the proof of \eqref{eq:claim} and proceed with the derivation of the sought contradiction. By 
the Sturm-Liouville theory, we know that $f_{k}$ has  $k-1$ distinct roots in $(0,1)$. A root $z$ of $f_{k}$ is not a root of $u_\mu$ since
by \eqref{eq:identity}, $u_\mu(z)=\frac{z}{2c}f_k'(z)\not=0\,$. As a consequence of \eqref{eq:claim}, 
$u_\mu$ has at most $k$ roots in $(0,1)$\,.

At the same time, $f_{k+1}$ has $k$ distinct roots
\[0<y_1<\ldots<y_{k}<1\,.\]
Put $y_{k+1}=1\,$. Thanks to the assumption $\lambda_{k+1}<\mu\,$, we get by the Sturm–Picone comparison theorem \cite[Theorem~B$^*$]{H}  that $u_\mu$ has a root in every $(y_i,y_{i+1})\,,$ for $i=1,\ldots,k\,$, thereby obtaining $k$ roots of $u_\mu$ in $(y_1,1)\,$. Moreover, $u_\mu$ has 
a root in $(0,y_1)\,$. Indeed, suppose not. Since $M(a,b,0)=1$,
both $u_\mu$ and $f_{k+1}$ are positive near $0\,$, hence both $u_\mu$ and $f_{k+1}$ are positive 
on $(0,y_1)$, so that $f_{k+1}'(y_1)<0\,$. Consider the Wronskian
\[
W:=u_\mu f_{k+1}'-u_\mu' f_{k+1}\,,\qquad
W'=\frac{\mu-\lambda_{k+1}}{h^2}\,u_\mu f_{k+1}>0\ \text{ on }(0,y_1)\,.
\]
In particular, $W$ is increasing on $(0,y_1)$. Since $\lim_{x\to0}W(x)=0$, we get that $W(x)\geq 0$ on $(0,y_1]\,$. With $0<\alpha<y_1\,$, it follows by integration on $(\alpha,y_1)\,$ that
\[W(y_1)=W(\alpha)+\int_\alpha^{y_1}W'(x)dx>0\,.\]
However, since $y_1$ is a zero of $f_{k+1}\,$, we get directly that $W(y_1)=u_\mu(y_1)f_{k+1}'(y_1)\leq 0\,$, a contradiction. 

Now, under the assumption $\lambda_{k+1}<\mu\,,$  we have proved that $u_\mu$ has $k+1$ roots. This contradicts the previous conclusion that $u_\mu$ has at most $k$ roots in $(0,1)\,$.\mn

It remains to prove the claim in \eqref{eq:claim}.

Note that, on an interval $I$ where $f_k$ does not vanish, we have 
\[Af_k=\frac{x}{2}(\phi+q)f_k\,,\quad \phi=\frac{f_k'}{f_k}\,,\quad q=\frac{\lambda_k+h}{2hx}-\frac{x}{h}\,.\]
In light of \eqref{eq:identity}, $u_\mu$ and $g=\phi+q$ have the same zero set in $I\,$. We will prove that $g'<0$ at a zero of $g$\,, which means that $g$ has at most one zero in $I$\,.

Combining the identity
\[\phi'=\frac{x^2}{h^2}+\frac{\nu^2-1/4}{x^2}-\frac{\lambda_k}{h^2}-\phi^2\,,\]
with $q=-\phi$ at a zero of $g\,,$ we obtain 
\[g'=\frac{x^2}{h^2}+\frac{\nu^2-1/4}{x^2}-\frac{\lambda_k}{h^2}-q^2+q' \text{ on }\{g=0\}\,.\]
By a straightforward calculation
\[\frac{x^2}{h^2}+\frac{\nu^2-1/4}{x^2}-\frac{\lambda_k}{h^2}-q^2+q'=\frac{1}{x^2}\Bigl(\nu^2-\bigl(\tfrac{\lambda_k}{2h}+1\bigr)^2\Bigr)\,,\]
with
\[\lambda_k\geq  \lambda_1\geq e_1(h)=(2\nu+2)h\,.\]
This proves that $g'(x)<0$ if $g(x)=0$\,.
\end{proof}

\subsection*{Acknowledgement} B.H. was
supported by the Agence Nationale de Recherche grant ANR-25-CE40-7296
(\href{https://anr.fr/Projet-ANR-25-CE40-7296}{La Gabare}). A.K. is partially supported by AUB URB grant (award no. 104752, project 28565), F.N. thanks the French GDR Dynqua for its support. The authors would like to thank R. Vanlaere for valuable comments on the first version of the manuscript.

\paragraph{\bf Use of artificial intelligence.}
The authors used AI tools for assistance with language editing,
graphical preparation, verification of some intermediate calculations, and a suggestion for a proof of Theorem~\ref{thm:gap}.
All mathematical arguments and conclusions were  independently checked
and written by the authors. 

\appendix 

\section{On the Bohr--Sommerfeld--Langer interpretation of the
Gabutti--Gatteschi equation}\label{AppA}

\mn
As recalled in the introduction, the spectral condition can be
reformulated in terms of the Whittaker function
$M_{\kappa,\mu}\,$, with
\begin{equation}\label{Whittaker-parameters-App}
\kappa=\frac{\lambda}{4h}\,,
\qquad
\mu=\frac{\nu}{2}\,.
\end{equation}
The spectral condition then takes the form
\begin{equation}\label{Whittaker-spectral-condition-App}
M_{\kappa,\mu}(1/h)=0\,.
\end{equation}
The Whittaker function $M_{\kappa,\mu}$ satisfies
\begin{equation}\label{Whittaker-equation}
w''(z)+
\left(
-\frac14+\frac{\kappa}{z}
+\frac{\frac14-\mu^2}{z^2}
\right)w(z)=0\,.
\end{equation}
In the supercritical regime, where the energy stays in a compact
subset of $(1,+\infty)\,$, we have
\begin{equation}\label{kappa-semiclassical-App}
\kappa=\frac{\lambda}{4h}\longrightarrow+\infty
\qquad\text{as }h\to0^+\,.
\end{equation}

\mn
The purpose of this appendix is to explain the origin of the implicit
relation \eqref{GG-zeta-equation} used in the Gabutti--Gatteschi
asymptotics. In the semiclassical limit
$\kappa\to+\infty$, the Whittaker equation can be reduced to a Bessel
model by a suitable change of variable. This change of variable is
determined by requiring that the classical actions of the two
equations coincide. This is precisely the content of
\eqref{GG-zeta-equation}. This equality of actions will then allow us
to recover the Bohr--Sommerfeld--Langer quantization rule of
Theorem~\ref{thm:supercritical-intro}.

\mn
We now rewrite \eqref{Whittaker-equation} in semiclassical form in
order to identify its principal part as $\kappa\to+\infty\,$.
Introduce
\begin{equation}\label{Whittaker-scaling}
z=\kappa X,
\qquad
\gamma=\frac{2\mu}{\kappa}\,.
\end{equation}
Then \eqref{Whittaker-equation} becomes
\begin{equation}\label{scaled-Whittaker-equation}
\kappa^{-2}w_{XX}
+
\frac{1}{4X^2}
\left(
-X^2+4X-\gamma^2+\kappa^{-2}
\right)w
=
0\,.
\end{equation}
Thus, at leading semiclassical order, the equation reduces to
\begin{equation}\label{principal-Whittaker-equation}
\kappa^{-2}w_{XX}
+
q_W(X;\gamma)w
=
0\,,
\qquad
q_W(X;\gamma)
=
\frac{-X^2+4X-\gamma^2}{4X^2}\,.
\end{equation}
The turning points of the principal equation are the roots of
\[
-X^2+4X-\gamma^2=0\,,
\]
namely
\[
\xi_1=2-\sqrt{4-\gamma^2}\,,
\qquad
\xi_2=2+\sqrt{4-\gamma^2}\,.
\]
The oscillatory region is
\[
\xi_1<X<\xi_2\,.
\]
In this region, the classical momentum associated with the principal
equation is
\begin{equation}\label{Whittaker-momentum}
p_W(X;\gamma)
=
\sqrt{q_W(x, \gamma)}
=
\frac{1}{2X}
\sqrt{(X-\xi_1)(\xi_2-X)}\,.
\end{equation}
The corresponding Whittaker action, measured from the left turning
point $\xi_1$, is therefore
\begin{equation}\label{Whittaker-action-def}
S_W(X;\gamma)
=
\int_{\xi_1}^{X}p_W(t;\gamma)\,dt
=
\int_{\xi_1}^{X}
\frac{1}{2t}
\sqrt{(t-\xi_1)(\xi_2-t)}\,dt\,.
\end{equation}
Writing
\[
R(X)=\sqrt{(X-\xi_1)(\xi_2-X)}\,,
\]
a direct computation gives
\begin{align}
S_W(X;\gamma)
&=
\frac12R(X)
-\frac{\gamma}{2}
\arctan\left(
\frac{2X-\gamma^2}{\gamma R(X)}
\right)
\nonumber\\
&\quad
-\arctan\left(
\frac{2-X}{R(X)}
\right)
+
\frac{\pi}{2}
\left(1-\frac{\gamma}{2}\right)\,.
\label{Whittaker-action-explicit}
\end{align}
Thus, the right-hand side of \eqref{GG-zeta-equation} is precisely
the Whittaker action.

\vspace{0.5cm}\noindent
We now compare the principal Whittaker equation
\eqref{principal-Whittaker-equation} with a Bessel model.
Consider
\begin{equation}\label{Bessel-model-equation}
u''(r)
+
\left(
1+\frac{\frac14-4\mu^2}{r^2}
\right)u(r)
=
0\,.
\end{equation}
Introducing
\begin{equation}\label{Bessel-scaling}
r=\kappa Y\,,
\qquad
\gamma=\frac{2\mu}{\kappa}\,,
\end{equation}
we obtain
\begin{equation}\label{scaled-Bessel-equation}
\kappa^{-2}u_{YY}
+
\left(
1-\frac{\gamma^2}{Y^2}
+\frac{1}{4\kappa^2Y^2}
\right)u
=
0\,.
\end{equation}
Thus, at leading semiclassical order, the Bessel model reduces to
\begin{equation}\label{principal-Bessel-equation}
\kappa^{-2}u_{YY}
+
q_B(Y;\gamma)u
=
0\,,
\qquad
q_B(Y;\gamma)
=
1-\frac{\gamma^2}{Y^2}\,.
\end{equation}

\mn
Thus, both principal equations have the same semiclassical parameter
$\kappa^{-1}\,$. They depend on the parameter
$
\gamma=\frac{2\mu}{\kappa}\,,
$
although it enters the two principal potentials differently.

\mn 
We now relate these equations by a Liouville change of variable, which is
determined by requiring that the corresponding classical actions
coincide.

\mn
The Bessel potential $q_B(Y;\gamma)$ has a positive turning point at
$Y=\gamma$, and the oscillatory region is $Y>\gamma$. In this region,
the corresponding classical momentum is
\begin{equation}\label{Bessel-momentum}
p_B(Y;\gamma)
=
\sqrt{q_B(Y;\gamma)}
=
\frac{\sqrt{Y^2-\gamma^2}}{Y}\,.
\end{equation}
The Bessel action, measured from the turning point $Y=\gamma$, is
therefore
\begin{equation}\label{Bessel-action-Y}
S_B(Y;\gamma)
=
\int_{\gamma}^{Y}p_B(s;\gamma)\,ds
=
\int_{\gamma}^{Y}
\frac{\sqrt{s^2-\gamma^2}}{s}\,ds\,.
\end{equation}

\mn
To recover the variable used in the Gabutti--Gatteschi equation, we
now set
\begin{equation}\label{Bessel-zeta-variable}
\zeta=Y^2\,.
\end{equation}
With the change of variable $t=s^2$, equation
\eqref{Bessel-action-Y} becomes
\begin{equation}\label{Bessel-action-def}
S_B(\zeta;\gamma)
=
\int_{\gamma^2}^{\zeta}
\frac{1}{2t}\sqrt{t-\gamma^2}\,dt\,.
\end{equation}
A direct computation gives
\begin{equation}\label{Bessel-action-explicit}
S_B(\zeta;\gamma)
=
\sqrt{\zeta-\gamma^2}
-
\gamma
\arctan\left(
\frac{\sqrt{\zeta-\gamma^2}}{\gamma}
\right)\,.
\end{equation}
\mn
The new variable $\zeta$ in Dunster's Liouville transformation is
defined by requiring equality of the two actions:
\begin{equation}\label{Liouville-action-identity}
	S_W(X;\gamma)=S_B(\zeta;\gamma)\,.
\end{equation}
This equality determines the change of variable $X\mapsto\zeta\,$, 
which transforms the principal Whittaker equation into the principal
Bessel equation. It is exactly the implicit relation
\eqref{GG-zeta-equation}. Thus, the somewhat complicated equation
used by Gabutti--Gatteschi simply expresses, in explicit form, the
equality of the two classical actions.

\mn
\mn
We finally explain how the Bohr--Sommerfeld--Langer quantization
appearing in Theorem~\ref{thm:supercritical-intro} is encoded in this
construction. For the $k$-th positive Whittaker zero, the Bessel
variable is chosen as
\begin{equation}\label{Bessel-zero-zeta-App}
	\zeta_k
	=
	\left(
	\frac{j_{2\mu,k}}{\kappa}
	\right)^2\,,
\end{equation}
where $j_{2\mu,k}$ denotes the $k$-th positive zero of the Bessel
function $J_{2\mu}$\,. The corresponding Whittaker variable $X_k$ is then determined by the
action identity
\begin{equation}\label{action-identity-k-App}
	S_W(X_k;\gamma)
	=
	S_B(\zeta_k;\gamma)\,.
\end{equation}
For large $k$, the classical McMahon expansion gives
\begin{equation}\label{McMahon-Langer-App}
	j_{2\mu,k}
	=
	\pi\left(k+\mu-\frac14\right)
	+
	\mathcal O(k^{-1})\,.
\end{equation}
Therefore, in the semiclassical regime $h\to0^+$ with
$\rho_0\leq 4kh\leq\rho_1\,$, the action identity
\eqref{action-identity-k-App}, together with the McMahon expansion,
naturally leads to a Bohr--Sommerfeld quantization condition. The rigorous asymptotic analysis carried out in
Paragraph~\ref{sss3.1.4}  shows that this procedure yields
\begin{equation}\label{BS-Langer-final-App}
	I(E)
	=
	\pi h
	\left(
	k+\frac{\nu}{2}-\frac14
	\right)
	+
	\mathcal O(h^2)\,,
\end{equation}
which is precisely the Bohr--Sommerfeld--Langer quantization rule
appearing in Theorem~\ref{thm:supercritical-intro}.

\section{Zeros of the Kummer function with respect to the first parameter}\label{AppB}

\subsection{Introduction}

In this appendix, we reformulate the spectral asymptotics obtained in the main text as asymptotic formulas for the zeros of the Kummer confluent hypergeometric function $M(a,b,z)$ with respect to its first parameter. Our purpose is to collect these formulas in a form that can be read independently of the spectral problem. To this end, we recall the standard definitions and relations that have already been introduced at various places in the paper. 

\mn
We first recall the definition of the Kummer confluent hypergeometric
function. For $a,z\in\mathbb{C}$ and
$b\in\mathbb{C}\setminus\{0,-1,-2,\ldots\}\,$, it is defined by
\begin{equation}\label{eq:Kummer-definition}
	M(a,b,z)={}_1F_1(a;b;z)
	=
	\sum_{n=0}^{\infty}
	\frac{(a)_n}{(b)_n}\frac{z^n}{n!}\,,
\end{equation}
where $(a)_n$ denotes the Pochhammer symbol,
\[
(a)_0=1\,,
\qquad
(a)_n=a(a+1)\cdots(a+n-1)\,,
\qquad n\geq1\,.
\]
The series in \eqref{eq:Kummer-definition}
converges for every $z\in\mathbb{C}$ and defines an entire function of
$z$. It also defines an entire function of the first parameter $a$,
for fixed admissible $b$ and $z\,$. We refer to
\cite[\S 13.2, Eq.~(13.2.2)]{DLMF} for these standard definitions and
properties.

\mn
We also recall the classical relation between the Kummer and Whittaker
functions. The Whittaker function $M_{\kappa,\mu}$ is related to the
Kummer function by
\begin{equation}
M_{\kappa,\mu}(z)
=
e^{-z/2}z^{\mu+1/2}
M\left(\mu-\kappa+\frac12,2\mu+1,z\right)\,.
\end{equation}
Equivalently, setting
\[
\mu=\frac{b-1}{2}\,,
\qquad
\kappa=\frac{b}{2}-a\,,
\]
one obtains
\[
M(a,b,z)
=
e^{z/2}z^{-b/2}
M_{\frac{b}{2}-a,\frac{b-1}{2}}(z)\,.
\]
We refer to \cite[\S 13.14, Eqs.~(13.14.2) and (13.14.4)]{DLMF}
for these identities and for the standard definitions and properties
of the Whittaker functions. In particular, for fixed $b$ and $z$, the zeros of
$a\mapsto M(a,b,z)$ are in one-to-one correspondence with the zeros of
\[
\kappa\mapsto M_{\kappa,(b-1)/2}(z)\,,
\qquad
\kappa=\frac{b}{2}-a\,.
\]
Thus, the asymptotic formulas obtained below for the zeros of the
Kummer function immediately yield corresponding formulas for the
zeros of the Whittaker function with respect to its first parameter.
For simplicity, we shall state our results only in terms of the Kummer
function.

\mn 
The zeros of the Kummer function with respect to its argument $z$
have been extensively studied. We refer to
\cite[\S 13.9]{DLMF} and the references therein for results concerning
their number and location, as well as asymptotic formulas for large
zeros. In particular, for fixed $a$ and $b$, the asymptotic
distribution of the large $z$-zeros is described in
\cite[\S 13.9, Eq.~(13.9.9)]{DLMF}.

\mn 
Much less seems to be known when the zeros are considered with respect
to the first parameter $a$. For fixed $b$ and $z$, the classical
large-index asymptotics of the $a$-zeros is given in
\cite[\S 13.9, Eq.~(13.9.10)]{DLMF}. In our indexing convention (see below), the large-index expansion
in \cite[Eq.~(13.9.10)]{DLMF} reads
\begin{equation}\label{eq:Kummer-a-large-k}
	a_k(z)
	=
	-\frac{\pi^2}{4z}
	\left(
	k^2+\left(b-\frac32\right)k
	\right)
	+\mathcal O(1)\,,
	\qquad k\to+\infty\,,
\end{equation}
for fixed $b$ and $z\,$.  This asymptotic regime, however, does not
describe the behavior of the low-lying $a$-zeros when $z$ becomes
large.

\mn
For fixed $k\geq 0$, a much more precise description in the limit
$z\to+\infty$ follows from the asymptotics of Gannot
\cite{Gannot}, recalled in \eqref{eq:Gannot*}. Translated into the present notation, his result gives,
for fixed $b$ and $k\geq0$\,,
\begin{equation}
a_k(z)
=
-k-
\frac{z^{2k+b}}{k!\,\Gamma(k+b)}
e^{-z}
\left(1+\mathcal O(z^{-1})\right)\,,
\qquad z\to+\infty\,.
\end{equation}
Thus the $k$-th $a$-zero is exponentially close to the negative
integer $-k$\,.

\mn
The purpose of the following subsections is to extend this picture
beyond the fixed-index regime and to describe the $a$-zeros uniformly
when $k$ is allowed to grow with $z\,$, both in the subcritical and
supercritical regimes. Before stating our results, let us clarify the
indexing convention used throughout this appendix. The eigenvalues studied in this paper are
indexed by $k\geq1$, whereas the negative zeros of the Kummer function
are conventionally indexed starting from  $k=0$; their negativity will be
justified below. Consequently, there is a shift of one between the two
indexings: the $k$-th Kummer zero $a_k(z)$ corresponds to the $(k+1)$-st
eigenvalue.

\mn
\mn
Throughout the remainder of this appendix, we assume that $b\geq1$ is
fixed, in accordance with the range of parameters considered in our
spectral analysis. For $z>0$, all the zeros of
$a\mapsto M(a,b,z)$ are real and simple. Indeed, by
\eqref{spectral-equation}, they are in one-to-one correspondence
with the simple eigenvalues of the self-adjoint Sturm--Liouville problem
considered above. More precisely, setting $h=z^{-1}$ and $b=1+\nu$,
with $\nu\geq0\,$, this correspondence is given by
\begin{equation}\label{eq:kummer-dictionary}
	a_k(z)=\frac b2-\frac z4\lambda_{k+1}(z^{-1})\,.
\end{equation}
Since $\lambda_{k+1}(h)\geq e_{k+1}(h)$ and
\begin{equation}\label{eq:kummer-ek}
	e_{k+1}(z^{-1})=\frac{4k+2b}{z}\,,
\end{equation}
we obtain
\begin{equation}\label{eq:kummer-upper-bound}
	a_k(z)\leq-k\,.
\end{equation}
Moreover, $a_0(z)\neq0$, since $M(0,b,z)=1$. Hence
\eqref{eq:kummer-upper-bound} implies that all the zeros are strictly
negative (see also \cite[Theorem~1.6]{Vanlaere}). With the indexing convention introduced above, we therefore
write
\begin{equation}\label{eq:kummer-zero-ordering}
	\cdots<a_2(z)<a_1(z)<a_0(z)<0\,.
\end{equation}
Furthermore, Theorem~\ref{thm:gap} yields a non-asymptotic separation bound for consecutive $a$-zeros: for every fixed $b\geq1$ and every $z>0\,$,
\[
a_k(z)-a_{k+1}(z)>1,\qquad k\geq0\,.
\]
Indeed, the correspondence \eqref{eq:kummer-dictionary} gives
\[
a_k(z)-a_{k+1}(z)
=\frac z4\bigl(\lambda_{k+2}(z^{-1})-\lambda_{k+1}(z^{-1})\bigr)
>1\,.
\]
\subsection{The subcritical regime}
\subsubsection{The low-energy regime}

As a consequence of Theorem~\ref{thm:ext-Gannot} and the correspondence
\eqref{eq:kummer-dictionary}, we obtain the following asymptotics for the
Kummer zeros. As $z\to+\infty\,$,
\begin{equation}\label{eq:kummer-low-energy}
	a_k(z)
	=
	-k-
	\frac{z^{2k+b}}{k!\,\Gamma(k+b)}
	e^{-z}
	\bigl(1+o(1)\bigr)\,,
\end{equation}
uniformly with respect to the integers $k\geq0$ satisfying
$k+1\leq z^{\gamma}\,$, $0<\gamma<1/2\,$. In particular, \eqref{eq:kummer-low-energy} applies
to every fixed $k\geq0$ as $z\to+\infty\,$.
\subsubsection{The higher-energy subcritical regime}

We now turn to the regime described by Theorem~\ref{thm:main}.
Let $0<\rho^*<1$\,, and let $\ell:(0,1]\to\mathbb R_+$ satisfy
\begin{equation}
\ell(h)\longrightarrow0\,,
\qquad
h^{-1}\ell(h)\longrightarrow+\infty
\qquad
(h\to0^+)\,.
\end{equation}
Then, as $z\to+\infty\,$,
\begin{equation}\label{eq:kummer-subcritical}
	a_k(z)
	=
	-k-\frac{1}{2\pi}
	\exp\bigl(-2z S_{k,z}\bigr)
	\bigl(1+o(1)\bigr)\,,
\end{equation}
uniformly with respect to the integers $k\geq0$ satisfying
\begin{equation}\label{eq:kummer-subcritical-range}
	\ell(z^{-1})
	\leq
	\frac{4k+2b}{z}
	\leq
	\rho^*\,,
\end{equation}
where
\begin{equation}\label{eq:Skz}
	S_{k,z}
	=
	\int_{\sqrt{(4k+2b)/z}}^1
	\sqrt{s^2-\frac{4k+2b}{z}}\,ds\,.
\end{equation}

\begin{remark}
	The low-energy asymptotic formula \eqref{eq:kummer-low-energy} matches
	the subcritical asymptotic formula \eqref{eq:kummer-subcritical} in
	their common range of validity. More precisely, in the regime
	\[
	k\to+\infty\,,
	\qquad
	\frac{k^2}{z}\to0\,,
	\]
	the two asymptotic formulas for the exponentially small displacement
	of $a_k(z)$ from the negative integer $-k$ are equivalent.
\end{remark}


\subsection{The transition regime}

We now consider the transition regime between the subcritical
and supercritical asymptotics. For $k\geq0$, set
\begin{equation}\label{eq:kummer-tau-transition}
	\tau_{k,z}
	:=
	\frac{z^{2/3}}{2^{2/3}}
	\left(
	1-\frac{4(k+1)}{z}
	\right)\,.
\end{equation}
Let $T>0$ and assume that
$
|\tau_{k,z}|\leq T.
$
Let $\theta:\mathbb R\to\mathbb R$ be the continuous phase defined by
\begin{equation}\label{eq:kummer-Airy-phase}
	\Ai(t)=A(t)\sin\theta(t),
	\qquad
	\Bi(t)=A(t)\cos\theta(t),
	\qquad
	A(t)=\bigl(\Ai(t)^2+\Bi(t)^2\bigr)^{1/2},
\end{equation}
with the determination chosen so that
$\theta(t)\to0$ as $t\to+\infty\,$.
Then, uniformly for $|\tau_{k,z}|\leq T\,$,
\begin{equation}\label{eq:kummer-transition}
	\begin{aligned}
		a_k(z)
		={}&
		-k-\frac{\theta(\tau_{k,z})}{\pi}
		\\
		&+
		\frac{z^{-1/3}}{\pi\,2^{2/3}}
		\left(
		2b-4+\frac{4}{\pi}\theta(\tau_{k,z})
		\right)
		\theta'(\tau_{k,z})
		+\mathcal O(z^{-2/3})\,,
		\mbox{ as }  z\to+\infty\,.
	\end{aligned}
\end{equation}

\subsection{The supercritical regime}

Let
$
1<\rho_0<\rho_1<+\infty.
$
In view of the shift in the indexing described above, for $k\geq0$ we set
\begin{equation}\label{eq:kummer-rho-super}
	\rho=\frac{4(k+1)}{z}\,,
\end{equation}
and assume that
$
\rho_0\leq\rho\leq\rho_1\,.
$
We recall the function $E_0$ appearing in the supercritical spectral
asymptotics. For $\rho>1$, $E_0(\rho)>1$ is uniquely determined by
\[
I(E_0(\rho))=\frac{\pi\rho}{4}\,,
\qquad
I(E)=\frac12\sqrt{E-1}
+\frac{E}{2}\arcsin(E^{-1/2})\,.
\]
Then, as $z\to+\infty\,$,
\begin{equation}\label{eq:kummer-supercritical-E0}
	a_k(z)
	=
	-\frac z4 E_0(\rho)
	+\frac b2
	-\frac{\pi(2b-3)}
	{8\,\arcsin\bigl(E_0(\rho)^{-1/2}\bigr)}
	+O(z^{-1})\,,
\end{equation}
uniformly with respect to $\rho\in[\rho_0,\rho_1]\,$. Equivalently, setting
\begin{equation}
\vartheta(\rho)
=
\arcsin\bigl(E_0(\rho)^{-1/2}\bigr)
\in\left(0,\frac{\pi}{2}\right)\,,
\end{equation}
we have
\begin{equation}
E_0(\rho)=\frac{1}{\sin^2\vartheta(\rho)}\,,
\end{equation}
where $\vartheta(\rho)$ is the unique solution of
\begin{equation}\label{eq:kummer-theta}
	\vartheta+\sin\vartheta\cos\vartheta
	=
	\frac{\pi\rho}{2}\sin^2\vartheta\,.
\end{equation}
Hence \eqref{eq:kummer-supercritical-E0} can also be written as
\begin{equation}\label{eq:kummer-supercritical}
	a_k(z)
	=
	-\frac{z}{4\sin^2\vartheta(\rho)}
	+\frac b2
	-\frac{\pi(2b-3)}{8\vartheta(\rho)}
	+\mathcal O(z^{-1})\,,
\end{equation}
uniformly with respect to $\rho\in[\rho_0,\rho_1]\,$.


\end{document}